\newcounter{bidon}
\newcommand{\rdb}{\refstepcounter{bidon}}
\newcommand \sibrouillon[1]{}
\newcommand \sibil[1] {#1}
\newcommand \sinotbil[1] {}
\begin{document}

\thispagestyle{empty}
~ 
\vspace{1cm}

This paper appeared in  \emph{Theoretical Computer Science}, {\bf 392}, no 1--3, p.\ 113--127, (2008)

\smallskip A companion paper appeared as: Barhoumi, Sami and Lombardi, Henri. An algorithm for the {T}raverso-{S}wan theorem on seminormal rings,
  \emph{Journal of Algebra}, {\bf 320}, no 4, p.\ 1531--1542, (2008)

\smallskip A constructive approach to the  Traverso-Swan theorem is also given in the book \emph{Commutative algebra: constructive methods. Finite projective modules}, Lombardi, Henri and Quitt{\'e}, Claude, Springer series Algebra and applications, {\bf 20}, (2015). A slightly extended and revised version is in \url{https://arxiv.org/abs/1605.04832}.

\bigskip \noindent In this file you find the English version starting on the page  numbered \pageref{beginenglish}.

\bigskip  \noindent  \centerline{{\Large \bf Seminormal Rings (following Thierry Coquand)}}

\medskip  \centerline{{\bf Abstract}}

\smallskip The Traverso-Swan theorem says that a reduced ring A is seminormal if and only if 
the natural morphism   $\mathrm{Pic}(\mathbf{A}) \rightarrow \mathrm{Pic}(\mathbf{A}[X])$ 
 is an isomorphism. We give here all the details needed to understand the elementary constructive proof for this result  given by Thierry Coquand 
in the paper: On seminormality. J. Algebra 305, no. 1, 577-584, (2006).

\bigskip \noindent  
Then the French version begins on the page numbered \pageref{beginfrench}.

\bigskip \noindent   \centerline{{\Large \bf Anneaux seminormaux
(d'apr\`es Thierry Coquand)}}

\medskip  \centerline{{\bf Résumé}}

\smallskip  Le théorème de Traverso-Swan affirme qu'un anneau 
réduit $\mathbf{A}$ est seminormal si, et seulement si, l'homomorphisme naturel 
$\mathrm{Pic} \,\mathbf{A}\rightarrow \mathrm{Pic} \,\mathbf{A}[X]$ est un isomorphisme. Nous exposons ici une démonstration constructive élémentaire de ce résultat qui a été donnée par Thierry Coquand 
dans: On seminormality. J. Algebra 305, n°1, 577-584, (2006).

\bigskip  La lectrice ou le lecteur sera sans doute surpris de l'alternance des sexes ainsi que de l'orthographe du mot 'corolaire', avec d'autres innovations auxquelles elle n'est pas habituée. En fait, nous avons essayé de suivre au plus près les préconisations de l'orthographe nouvelle recommandée, telle qu'elle est enseignée aujourd'hui dans les écoles en France.  

\bigskip\noindent   {\large \bf Authors}  

\smallskip \noindent Henri Lombardi, Université Marie et Louis Pasteur, F-25030 Besançon Cedex, France, \\
email: {\tt henri.lombardi@umlp.fr}

\smallskip \noindent Claude Quitté, Laboratoire de Math\'ematiques,
SP2MI, Boulevard 3, Teleport 2, BP 179, F-86960 FUTUROSCOPE Cedex,
FRANCE, 
 email: {\tt claude.quitte@orange.fr}

\normalsize
\newpage
\thispagestyle{empty}

~

\pagestyle{headings}
\setcounter{page}{0}
\renewcommand\thepage{E\arabic{page}}

\newcommand \sitout[1]{}

\begingroup

\selectlanguage{english}



\newtheorem{theorem}{Theorem}[section]
\newtheorem{thdef}[theorem]{Theorem and definition}
\newtheorem{lemma}[theorem]{Lemma}
\newtheorem{corollary}[theorem]{Corollary}
\newtheorem{conjecture}[theorem]{Conjecture}
\newtheorem{definition}[theorem]{Definition}
\newtheorem{definitions}[theorem]{Definitions}
\newtheorem{proposition}[theorem]{Proposition}
\newtheorem{propdef}[theorem]{Proposition and definition}
\newtheorem{remark}[theorem]{Remark}
\newtheorem{fact}[theorem]{Fact}
\newtheorem{example}[theorem]{Example}
\newtheorem{notation}[theorem]{Notation}
\newtheorem{notadefi}[theorem]{Notation and definition} 
\newtheorem{comment}[theorem]{Comment}
\newtheorem{convention}[theorem]{Convention}
\newtheorem{problem}[theorem]{Problem}
\newtheorem{question}[theorem]{Question}


\newcommand {\junk}[1]{}

\newcommand \rem  {\noindent \emph{Remark. }  }
\newcommand \rems {\noindent \emph{Remarks. }  }
\newcommand \comm {\noindent \emph{Commentk. }  }
\newcommand \exl  {\noindent \emph{Example. }}

\newenvironment{proof}{
\trivlist \item[\hskip \labelsep{\it Proof.}]\hskip 1pt }
{\hfill \mbox{$\Box$}
\endtrivlist}
\makeatletter

\newenvironment{prooF}{
\trivlist \item[\hskip \labelsep{\it Proof.}]\hskip 1pt}
{\hfill \mbox{\textsf{Not so bad, indeed?}}
\endtrivlist}
\makeatletter

\newenvironment{Proof}[1]{
\trivlist \item[\hskip \labelsep{\it #1}]\hskip 0pt\\}
{\hfill \mbox{$\Box$}
\endtrivlist}
\makeatletter

\newcommand \eop {\hbox{}\nobreak\hfill
\vrule width 1.4mm height 1.4mm depth 0mm \par \goodbreak 
\smallskip}

\newcommand\aec{\texttt{\`A \'ecrire.}}

\DeclareRobustCommand{\guig}{\mbox{{\usefont{U}{lasy}%
{\if b\expandafter\@car\f@series\@nil b\else m\fi}{n}%
\char40\kern-0.20em\char40}~}}
\DeclareRobustCommand{\guid}{\mbox{~\usefont{U}{lasy}%
{\if b\expandafter\@car\f@series\@nil b\else m\fi}{n}%
\char41\kern-0.20em\char41}}
\newcommand\gui[1]{``{#1}''}

\newcommand \aigu{\mathaccent19}    
\renewcommand \grave{\mathaccent18}    
\newcommand \noi {\noindent}
\newcommand \sms {\smallskip}
\newcommand \sni {\sms\noi}
\newcommand \ms {\medskip}
\newcommand \mni {\ms\noi}
\newcommand \bs {\bigskip}
\newcommand \bni {\bs\noi}
\newcommand \hs {\qquad}
\newcommand \alb {\allowbreak}
\newcommand \ce {\centerline}

\newcommand\sta{^\star}
\newcommand\ista{_\star}
\newcommand \bu {{$\bullet$}}
\newcommand \bl {^\bullet}
\renewcommand \cir {^\circ}
\newcommand\equidef{\buildrel{{\rm def}}\over{\quad\Longleftrightarrow\quad}} 


\newcommand\mapright[1]{\smash{\mathop{\longrightarrow}\limits^{#1}}} 
\newcommand\maprightto[1]{\smash{\mathop{\longmapsto}\limits^{#1}}} 
\newcommand\mapdown[1]{\downarrow\rlap{$\vcenter{\hbox{$\scriptstyle 
#1$}}$}}
\newcommand\eqdf[1]{\buildrel{#1}\over{\; =\;}}
\newcommand\equivdf[1]{\buildrel{#1}\over \longleftrightarrow}
\newcommand\vers[1]{\buildrel{#1}\over \longrightarrow }
\newcommand\impdef[1]{\buildrel{#1}\over \Longrightarrow} 
\newcommand\tra[1]{{\,^{\rm t}\!#1}}
\newcommand\gen[1]{\left\langle{#1}\right\rangle} 
\newcommand\so[1]{\left\{{#1}\right\}} 
\newcommand \sur[1]{\!\left/#1\right.}
\newcommand \aqo[2]{#1\sur{\gen{#2}}}

\newcommand \snic[1] {\sni\centerline{$#1$}\sms}
\newcommand \snif[3] 
{\vspace{#1}\noindent\centerline{$#3$}\vspace{#2}}

\newcommand \ov[1] {\overline{#1}}

\newcommand \wh[1] {\widehat{#1} }
\newcommand \wi[1] {\widetilde{#1} }

\newcommand \cmatrix[1]{\left[\matrix{#1}\right]}  
\newcommand \bloc[4]{\left[\matrix{#1 & #2 \cr #3 & #4}\right]}

\newcommand \CC{\mathbb {C}} 
\newcommand \NN{\mathbb {N}} 
\newcommand \ZZ{\mathbb {Z}} 

\newcommand \gx{{\underline{x}}}
\newcommand \gy{{\underline{y}}}
\newcommand \gz{{\underline{z}}}
\newcommand \gu{{\underline{u}}}
\newcommand \gt{{\underline{t}}}
\newcommand \gc{{\underline{c}}}

\newcommand \gk{{\bf k}}
\newcommand \gA{\mathbf{A}}
\newcommand \AX {\gA[X]}
\newcommand \gB{\mathbf{B}}
\newcommand \gC{\mathbf{C}}
\newcommand \gK{\mathbf{K}}
\newcommand \gL{\mathbf{L}}
\newcommand \gM{\mathbf{M}}
\newcommand \gT{\mathbf{T}}
\newcommand \gV{\mathbf{V}}
\newcommand \gZ{\mathbf{Z}}

\newcommand \GL{\mathsf{GL}}

\newcommand \cC {{\cal C}}
\newcommand \cD {{\cal D}}
\newcommand \cI {{\cal I}}
\newcommand \cJ {{\cal J}}
\newcommand \cF {{\cal F}}
\newcommand \cN {{\cal N}}
\newcommand \cP {{\cal P}}
\newcommand \cQ {{\cal Q}}
\newcommand \cM {{\cal M}}
\newcommand \cT {{\cal T}}
\newcommand \cL {{\cal L}}
\newcommand \cR {{\cal R}}
\newcommand \cS {{\cal S}}

\newcommand \rI {\mathrm{I}}
\newcommand \rD {\mathrm{D}}
\newcommand \rG {\mathrm{G}}
\newcommand \rV {\mathrm{V}}
\newcommand \rJ {\mathrm{J}}
\newcommand \rH {\mathrm{H}}
\newcommand \rK {\mathrm{K}}
\newcommand \rN {\mathrm{N}}
\newcommand \rP {\mathrm{P}}
\newcommand \rL{\mathrm{L}}
\newcommand \rPr{\mathrm{Pr}}

\newcommand\fa{\mathfrak{a}}
\newcommand\fb{\mathfrak{b}}
\newcommand\fc{\mathfrak{c}}
\newcommand\fA{\mathfrak{A}}
\newcommand\fB{\mathfrak{B}}
\newcommand\fD{\mathfrak{D}}
\newcommand\fI{\mathfrak{i}}
\newcommand\fII{\mathfrak{I}}
\newcommand\fj{\mathfrak{j}}
\newcommand\fJ{\mathfrak{J}}
\newcommand\fF{\mathfrak{F}}
\newcommand\ff{\mathfrak{f}}
\newcommand\ffg{\mathfrak{g}}
\newcommand\fG{\mathfrak{G}}
\newcommand\fh{\mathfrak{h}}
\newcommand\fl{\mathfrak{l}}
\newcommand\fm{\mathfrak{m}}
\newcommand\fM{\mathfrak{M}}
\newcommand\fp{\mathfrak{p}}
\newcommand\fP{\mathfrak{P}}
\newcommand\fq{\mathfrak{q}}
\newcommand\fV{\mathfrak{V}}

\newcommand \Zg {{\Z[G]}}

\newcommand \vu {\vee} 
\newcommand \vi {\wedge} 
\newcommand \Vu {\bigvee}
\newcommand \Vi {\bigwedge}
\newcommand \im {\rightarrow} 
\newcommand \da {\,\downarrow\!}
\newcommand \ua {\,\uparrow\!}
\newcommand \vd {\,\vdash\,}
\newcommand \vdu[1] {\,\vdash^{#1}\,}
\newcommand \vdb[1] {\,\vdash_{#1}\,}


\newcommand \uX {\underline{X}}
\newcommand \Ared {\gA\red}
\newcommand \AuX {\gA[\uX]}
\newcommand \Xm {X_1,\ldots,X_m}
\newcommand \AXn {\gA[\Xn]}
\newcommand \AXm {\gA[\Xm]}
\newcommand \xn {x_1,\ldots,x_n}
\newcommand \yn {y_1,\ldots,y_n}
\newcommand \cq {c_1,\ldots,c_q}

\newcommand \Pf {{{\cal P}_{\mathrm{f}}}}

\newcommand \Ann {\mathrm{Ann}}
\newcommand \Diag {\mathrm{Diag}}
\newcommand \Hom {\mathrm{Hom}}
\renewcommand \det {\mathrm{det}}
\renewcommand \dim {\mathrm{dim}}
\newcommand \Coker {\mathrm{Coker}}
\newcommand \Frac {\mathrm{Frac}}
\newcommand \Ker {\mathrm{Ker}}
\renewcommand \Im {\mathrm{Im}}
\newcommand \Id {\mathrm{Id}}
\newcommand \I {\mathrm{I}}
\newcommand \In {\I_n}
\newcommand \Mat {\mathrm{Mat}}
\newcommand \Rad {\mathrm{Rad}}
\newcommand \Tr {\mathrm{Tr}}
\newcommand \mod {\,\mathrm{mod}\,}
\newcommand \pgcd {\mathrm{gcd}}
\newcommand \red {_\mathrm{red}}

\newcommand \GKO {\mathsf{GK}_{0}}
\newcommand \Pic {\mathsf{Pic}}
\newcommand \Spec {\mathsf{Spec}}

\newcommand \num {{n$^{\mathrm{o}}$}}
 
\newcommand \recu {induction }
\newcommand \recuz {induction}
\newcommand \hdr {induction hypo\-thesis }
\newcommand \hdrz {induction hypo\-thesis}
\newcommand \cad {i.e., }
\newcommand \Cad {I.e., }
\newcommand \ssi {if and only if }
\newcommand \cnes {necessary and sufficient condition }
\newcommand \spdg {w.l.o.g. }
\newcommand \Propeq {The following are equivalent:}
\newcommand \propeq {the following are equivalent:}
\newcommand \disept {17$^{th}$ Hilbert's problem 
}


\newcommand \Amo {$\gA$-mo\-du\-le }
\newcommand \Amos {$\gA$-mo\-du\-les }
\newcommand \Amoz {$\gA$-mo\-du\-le}
\newcommand \Amosz {$\gA$-mo\-du\-les}

\newcommand \Bmo {$\gB$-mo\-du\-le }
\newcommand \Bmos {$\gB$-mo\-du\-les }
\newcommand \Bmoz {$\gB$-mo\-du\-le}
\newcommand \Bmosz {$\gB$-mo\-du\-les}

\newcommand \Ali {$\gA$-\lin map }
\newcommand \Alis {$\gA$-\lin maps }
\newcommand \Aliz {$\gA$-\lin map}
\newcommand \Alisz {$\gA$-\lin maps}

\newcommand \alg {algebra }
\newcommand \algs {algebras }
\newcommand \algz {algebra}
\newcommand \algsz {algebras}

\newcommand \algo{algorithm }
\newcommand \algos{algorithms }
\newcommand \algoz{algorithm}
\newcommand \algosz{algorithms}

\newcommand \ali {\lin map }
\newcommand \alis {\lin maps }
\newcommand \aliz {\lin map}
\newcommand \alisz {\lin maps}

\newcommand \auto {automorphism }
\newcommand \autos {automorphisms }
\newcommand \autosz {automorphisms}
\newcommand \autoz {automorphism}

\newcommand \cac {algebraically closed field }
\newcommand \cacz {algebraically closed field}
\newcommand \cacs {algebraically closed fields }
\newcommand \cacsz {algebraically closed fields}

\newcommand \carn{characterisation }  
\newcommand \carns{characterisations }

\newcommand \coe {coefficient }
\newcommand \coes {coefficients }
\newcommand \coez {coefficient}
\newcommand \coesz {coefficients}

\newcommand \coli {\lin combination }
\newcommand \colis {\lin combinations }
\newcommand \coliz {\lin combination}
\newcommand \colisz {\lin combinations}

\newcommand \com {comaximal }
\newcommand \comz {comaximal}

\newcommand \ddk {Krull dimension }
\newcommand \ddkz {Krull dimension}
\newcommand \ddi {with \ddk $\leq$~}

\newcommand \dfn{definition }  
\newcommand \dfns{definitions }  
\newcommand \dfnz{definition}  
\newcommand \dfnsz{definitions}  

\newcommand \egt{equality } 
\newcommand \egts{equalities } 
\newcommand \egtz{equality} 
\newcommand \egtsz{equalities}

\newcommand \elr{elementary }  
\newcommand \elrz{elementary}

\newcommand \elt{element }  
\newcommand \elts{elements }  
\newcommand \eltz{element}  
\newcommand \eltsz{elements}

\newcommand \egmt {also }

\newcommand \entrel {entailment relation }
\newcommand \entrelz {entailment relation}
\newcommand \entrels {entailment relations }
\newcommand \entrelsz  {entailment relations}

\newcommand \evc{vector space } 
\newcommand \evcs{vector spaces } 
\newcommand \evcz{vector space} 
\newcommand \evcsz{vector spaces} 

\newcommand \fdi{strongly discrete } 
\newcommand \fdiz{strongly discrete} 

\newcommand \gtr{generator }  
\newcommand \gtrs{generators }  
\newcommand \gtrz{generator}  
\newcommand \gtrsz{generators}

\newcommand \homo {homomorphism }
\newcommand \homoz {homomorphism}
\newcommand \homos {homomorphisms }
\newcommand \homosz {homomorphisms}

\newcommand \id {ideal }
\newcommand \ids {ideals }
\newcommand \idz {ideal}
\newcommand \idsz {ideals}

\newcommand \idd {ideal  d\'eter\-minan\-tiel }
\newcommand \idds {id\'eaux d\'eter\-minan\-tiels }
\newcommand \iddz {ideal  d\'eter\-minan\-tiel}
\newcommand \iddsz {id\'eaux d\'eter\-minan\-tiels}

\newcommand \idema {maximal ideal }
\newcommand \idemas {maximal ideals }
\newcommand \idemaz {maximal ideal}
\newcommand \idemasz {maximal ideals}

\newcommand \idep {prime ideal }
\newcommand \ideps {prime ideals }
\newcommand \idepz {prime ideal}
\newcommand \idepsz {prime ideals}

\newcommand \idemi {minimal prime }
\newcommand \idemis {minimal primes }
\newcommand \idemiz {minimal prime}
\newcommand \idemisz {minimal primes}

\newcommand \idf {Fitting ideal }
\newcommand \idfs {Fitting ideals }
\newcommand \idfz {Fitting ideal}
\newcommand \idfsz {Fitting ideals}

\newcommand \idm {idempotent }
\newcommand \idms {idempotents }
\newcommand \idmz {idempotent}
\newcommand \idmsz {idempotents}

\newcommand \idme \idm
\newcommand \idmes \idm
\newcommand \idmez \idmz
\newcommand \idmesz \idmz

\newcommand \itf {\tf ideal }
\newcommand \itfs {\tf ideals }
\newcommand \itfz {\tf ideal}
\newcommand \itfsz {\tf ideals}

\newcommand \iso {isomorphism }
\newcommand \isos {isomorphisms }
\newcommand \isosz {isomorphisms}
\newcommand \isoz {isomorphism}

\newcommand \lin {linear }
\newcommand \linz {linear}

\newcommand \mo {monoid }
\newcommand \mos {monoids }
\newcommand \moz {monoid}
\newcommand \mosz {monoids}
\newcommand \moco {\com \mos}
\newcommand \mocoz {\com \mosz}

\newcommand \mpr {\pro module }
\newcommand \mprs {\pro modules }
\newcommand \mprz {\pro module}
\newcommand \mprsz {\pro modules}

\newcommand \mptf {\ptf module }
\newcommand \mptfs {\ptf modules }
\newcommand \mptfz {\ptf module }
\newcommand \mptfsz {\ptf module}

\newcommand \mrc {projective module of constant rank }
\newcommand \mrcs {projective modules of constant rank }
\newcommand \mrcz {projective module of constant rank}
\newcommand \mrcsz {projective modules of constant rank}

\newcommand \ndz {non zero divisor }
\newcommand \ndzs {non zero divisors }

\newcommand \noe {Noetherian }
\newcommand \noco {\noe\coh }
\newcommand \noez {Noetherian}
\newcommand \nocoz {\noe\cohz}

\newcommand \nst {Nullstellensatz }
\newcommand \nstz {Nullstellensatz}
\newcommand \nsts {Nullstellens\"atze }
\newcommand \nstsz {Nullstellens\"atze}

\newcommand \odz {Zariski open set }

\newcommand \oqc {\qc open }
\newcommand \oqcs {\qc opens }
\newcommand \oqcz {\qc open}
\newcommand \oqcsz {\qc opens}

\newcommand \pa {saturated pair }
\newcommand \pas {saturated pairs }
\newcommand \paz {saturated pair}
\newcommand \pasz {saturated pairs}

\newcommand \pb{problem }  
\newcommand \pbs{problems }  
\newcommand \pbz{problem}  
\newcommand \pbsz{problems} 

\newcommand \pf {finitely presented }
\newcommand \pfz {finitely presented}

\newcommand \pn {presentation }
\newcommand \pns {presentations }
\newcommand \pnz {presentation}
\newcommand \pnsz {presentations}

\newcommand \pol {polynomial }
\newcommand \pols {polynomials }
\newcommand \polz {polynomial}
\newcommand \polsz {polynomials}

\newcommand \polcar {characteristic \pol }
\newcommand \polcarz {characteristic \polz}

\newcommand \prc {rank constant \pro }

\newcommand \prn {projection }
\newcommand \prns {projections }
\newcommand \prnz {projection}
\newcommand \prnsz {projections}

\newcommand \proi {potential prime }
\newcommand \prois {potential primes }
\newcommand \proiz {potential prime}
\newcommand \proisz {potential primes}

\newcommand \proc {potential chain }
\newcommand \procs {potential chains }
\newcommand \Procs {potential chains }
\newcommand \procz {potential chain}
\newcommand \procsz {potential chains}

\newcommand \proel {elementary \proc}
\newcommand \proels {elementary \procs}
\newcommand \proelz {elementary \procz}
\newcommand \proelsz {elementary \proczs}
\newcommand \proelo {\proel of length }
\newcommand \proelos {\proels of length }

\newcommand \prolo {\proc of length }
\newcommand \prolos {\procs of length }

\newcommand \pro {projective }
\newcommand \proz {projective}

\newcommand \ptf {\tf \pro}
\newcommand \ptfs {\ptf}
\newcommand \ptfz {\tf \proz}
\newcommand \ptfsz {\ptfz}
\newcommand \ptfe {\tf \pro}

\newcommand \qc {compact }
\newcommand \qcs {compact }
\newcommand \qcz {compact}
\newcommand \qcsz {compact}

\newcommand \qi {quasi integral }
\newcommand \qis {quasi integral }
\newcommand \qisz {quasi integral}
\newcommand \qiz {quasi integral}

\newcommand \rdl {linear dependence relation }
\newcommand \rdls {linear dependence relations }
\newcommand \rdlsz {linear dependence relation}
\newcommand \rdlz {linear dependence relations}

\newcommand \rdi {integral dependence relation }
\newcommand \rdis {integral dependence relations }
\newcommand \rdiz {integral dependence relation}
\newcommand \rdisz {integral dependence relations}

\newcommand \sad {dynamic algebraic structure }
\newcommand \sads {dynamic algebraic structures }
\newcommand \sadz {dynamic algebraic structure}
\newcommand \sadsz {dynamic algebraic structures}

\newcommand \sdz {without \dvz}
\newcommand \sdzz {without \dvzz}

\newcommand \sli {\lin \sys }
\newcommand \slis {\lin \syss }
\newcommand \sliz {\lin \sysz}
\newcommand \slisz {\lin \syssz}

\newcommand \sys {system }
\newcommand \syss {systems }
\newcommand \sysz {system}
\newcommand \syssz {systems}

\newcommand \Tho {Theorem }
\newcommand \tho {theorem }
\newcommand \thos {theorems }
\newcommand \thoz {theorem}
\newcommand \thosz {theorems}

\newcommand \tf {finitely generated }
\newcommand \tfz {finitely generated}

\newcommand \sfio {basic system of orthogonal \idms }
\newcommand \sfios {basic systems of orthogonal \idms }
\newcommand \sfioz {basic system of orthogonal \idmsz}
\newcommand \sfiosz {basic systems of orthogonal \idmsz}

\newcommand \vfn {verification }
\newcommand \vfns {verifications }
\newcommand \vfnz {verification}
\newcommand \vfnsz {verifications}

\newcommand \zed {zero-dimensional }
\newcommand \zedz {zero-dimensional}
\newcommand \zede {zero-dimensional }
\newcommand \zedez {zero-dimensional}
\newcommand \zeds {zero-dimensional }
\newcommand \zedsz {zero-dimensional}
\newcommand \zedes {zero-dimensional }
\newcommand \zedesz {zero-dimensional}


\newcommand \cof {constructive }
\newcommand \cofs {constructive}
\newcommand \cofz {constructive }
\newcommand \cofsz {constructive}

\newcommand \cov {constructive }
\newcommand \covz {constructive}
\newcommand \covsz {constructives}
\newcommand \covs {constructives }

\newcommand \coma {\cov \maths}
\newcommand \comaz {\cov \mathsz}
\newcommand \clama {classical \maths}
\newcommand \clamaz {classical \mathsz}

\newcommand \maths {mathematics }
\newcommand \mathsz {mathematics}

\newcommand \LLPO{{\bf LLPO}}

\newcommand \prco {\cov proof }
\newcommand \prcos {\cov proofs }
\newcommand \prcoz {\cov proof}
\newcommand \prcosz {\cov proofs}

\renewcommand \cot {constructively }
\newcommand \cotz {constructively}

\newcommand \pte {law of excluded middle }
\newcommand \ptez {law of excluded middle}

\newcommand \tcg {compactness theorem }
\newcommand \tcgz {compactness theorem}
\newcommand \acgz {G\"odel's completness axiom}
\newcommand \Tcgi {The \tcg implies the following result. }

\def\proofname{\emph{Proof}}

\title{ Seminormal Rings
\\
(following Thierry Coquand)}
\author{
Henri Lombardi (\thanks{~Équipe de Mathématiques, UMR CNRS 6623,
UFR des Sciences and Techniques, Université Marie et Louis Pasteur,
25030 BESANCON cedex, FRANCE,
email: {\tt henri.lombardi@umlp.fr}.}~), Claude Quitt\'e (\thanks {~Laboratoire de Math\'ematiques,
SP2MI, Boulevard 3, Teleport 2, BP 179, 86960 FUTUROSCOPE Cedex,
FRANCE, email: {\tt claude.quitte@orange.fr}}~) }
\date{November 2007}

\maketitle

\rdb
\label{beginenglish}

\startcontents[english]

\begin{abstract}
The Traverso-Swan \tho says that a reduced ring $\gA$ is seminormal \ssi 
the natural \homo  $\Pic \,\gA\to\Pic 
\,\AX $ is an \iso (\cite{Tra,Swan}). We give here all the details
needed to understand the \elr  \prco 
for this result  given by Thierry Coquand 
in~\cite{coq}.

This example is typical of a new  \cov method.
The final proof is simpler than the initial classical one.
More important: the classical argument by absurdum using \gui{an abstract ideal object} is deciphered with a general technique
based on the following idea: purely ideal objects constructed using LEM and Choice may be replaced by concrete objects that are ``finite approximations'' of these ideal objects. 
\end{abstract}

\medskip \noindent {\bf Keywords:} Seminormal rings; Traverso’s theorem; Constructive algebra; Minimal primes; Dynamical method.

\smallskip \noindent {\bf MSC:} 03F65, 13F45, 13B40, 14Qxx.

\small

\newpage

\setcounter{tocdepth}{4}

\printcontents[english]{}{1}{}
\normalsize

\newpage

\section {Introduction}
\markboth{Seminormalrings}{Introduction}

%
%
%
%
%
%
%
%
%
%

The Traverso-Swan \tho says that a reduced ring $\gA$ is seminormal \ssi 
the natural \homo  $\Pic \,\gA\to\Pic 
\,\AX $ is an \iso (\cite{Tra,Swan}). We give here all the details
needed to understand the \elr  \prco 
for this result  given by Thierry Coquand 
in~\cite{coq}.

First, we have to give a classical proof (using LEM and Choice)
as elementary as possible. After this first simplification we have
to remove remaining non constructive arguments. 
Here it is a proof by absurdum based on the introduction of an abstract ideal object, which is a minimal prime.

The deciphering of this non constructive argument is based on
the so called \gui{dynamical method}.

\smallskip This example is paradigmatic of a new general \cov method
inspired by the following semantic: purely ideal objects constructed using LEM and Choice may be replaced by 
\emph{concrete objects that are finite approximations of these ideal objects}.

An important step, where this method was introduced in Computer Algebra
from an efficiency point of view, was the computer algebra system D5 
\cite{D5}: here we see that it is possible to compute inside the algebraic closure of an arbitrary computable field, contrarily to the well known fact
that such an algebraic closure cannot exist constructively as a static object. So D5 told us that, from a constructive point of view, the algebraic closure of an arbitrary computable field does exist, not as a static object, but as a dynamical one.

In the paper \cite{CLR} the dynamical method is explained on the example
of abstract proofs, via model theory, of results similar to the Hilbert \nstz. 
Here ideal abstract objects are the models of a coherent first order theory.
These models have to exist in classical mathematics: this is the compactness
theorem in (classical) model theory. 
When the classical proof is deciphered in a constructive one, each one
of these models
is replaced by  \gui{a finite amount of  information concerning it}.

In the papers \cite{cl,CLR2}, chains of prime ideals that are used in \clama
in order to define the Krull dimension are replaced by finite sequences
of elements of the ring. In this way we obtain an \elr \dfn
of the Krull dimension, without using any prime ideal.
The Krull dimension of usual rings matches the elementary definition in a constructive way. 
So  theorems in commutative algebra that have in their hypothesis a 
bound on the Krull dimension can now be reread in a constructive way,
and for several important ones a constructive proof, much more precise
than the classical one, has been found.  
E.g., Serre's \gui{splitting-off}, 
\gui{stable range} and \gui{cancellation}  \thos of Bass, and  Forster-Swan \thoz.
Moreover the \cov versions (\cite{Coq3,clq}) are an improvement of the most sophisticated classical versions of these \thos given by R. Heitmann in his remarkable \gui{non\noez}  1984 paper~\cite{Hei84}.

Finally let us mention that in \cite{Y1}, I. Yengui has shown how to reread in a dynamical way classical proofs that use maximal ideals.

\smallskip In the example given in the present paper, we get a proof which is simpler and more elegant than the classical ones. 
But the most important fact is that the classical argument \gui{by absurdum
and using a purely ideal object} is deciphered by following the general method
we have sketched. 
The localisation at a generic \idemi $\fp$ is replaced by a tree computation
where we try to make invertible all elements that appear in the
computational proof. The tree comes from the fact that in the classical reasoning
one uses an argument saying \gui{any \elt $x$ of the ring is either inside or outside the generic \idemi $\fp$ we consider}. Since the prime is minimal, a priori $x$ have to be outside of 
$\fp$. We have to use the branch ``$x$ inside $\fp$''  only in the case where the computation shows that $0$ becomes invertible if $x$ is outside $\fp$.

\smallskip We shall explain first in Section \ref{sec2} what happens with an integral ring.
We give the proof of the general case in the Annex.

\newpage
\section{Preliminaries} \label{sec-prelim}

$\gA$, $\gB$, $\gC$ are commutative rings. 
Used without more precision an \gui{\homoz} is always a ring \homoz.

\subsubsection*{Seminormal rings} 
\addcontentsline{toc}{subsection}{Seminormal rings} 
\markboth{Seminormalrings}{Preliminaries}

An integral ring $\gA$ is said to be \emph{seminormal} if whenever 
$b^2=c^3\neq 0$ the \elt $a=b/c$ of the fraction field is in $\gA$. 
Remark that 
 $a^3=b$ and $a^2=c$.

An arbitrary ring $\gA$  is said to be \emph{seminormal} if whenever $b^2=c^3$, there exists $a\in\gA$ such that  $a^3=b$ and $a^2=c$.
This implies $\gA$  is reduced: if $b^2=0$  then $b^2=0^3$, 
so we get an $a\in\gA$ with $a^3=b$ and $a^2=0$, thus $b=0$.

In  a ring if $x^2=y^2$ and  $x^3=y^3$ then $(x-y)^3=0$.
So:

\begin{fact} 
\label{factRed1} 
In  a reduced ring  $x^2=y^2$ and  $x^3=y^3$ imply $x=y$.
\end{fact}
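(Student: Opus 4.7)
The plan is to prove the intermediate claim already announced in the text, namely that $x^2=y^2$ and $x^3=y^3$ imply $(x-y)^3=0$ in an arbitrary commutative ring, and then to invoke the hypothesis that the ring is reduced to conclude $x=y$.

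For the intermediate claim, I would just expand the cube: $(x-y)^3 = x^3 - 3x^2y + 3xy^2 - y^3$. The two middle terms can be rewritten using the given hypotheses: since $x^2=y^2$, we have $-3x^2y = -3y^2\cdot y = -3y^3$ and $3xy^2 = 3x\cdot x^2 = 3x^3$. Substituting these yields $(x-y)^3 = x^3 - 3y^3 + 3x^3 - y^3 = 4(x^3-y^3)$, which is zero by the second hypothesis $x^3=y^3$. This computation is entirely symbolic and holds in every commutative ring.

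Once $(x-y)^3=0$ is established, the element $x-y$ is nilpotent. A reduced ring is one in which the only nilpotent element is zero (equivalently, $z^n=0$ for some $n\geq1$ implies $z=0$); this is precisely the consequence of seminormality used in the preceding paragraph, which shows that a seminormal ring is reduced by extracting a cube root and squaring. Applied to $z=x-y$ with $n=3$, we obtain $x-y=0$, i.e. $x=y$, as required.

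There is no real obstacle here: the proof is a one-line algebraic identity combined with the definition of \emph{reduced}. The only thing to be mildly careful about is that we are in a commutative (but not necessarily integral) ring, so the step $4(x^3-y^3)=0$ really does follow from $x^3=y^3$ without any cancellation issue, and the conclusion uses the reducedness hypothesis rather than any stronger domain-like assumption.
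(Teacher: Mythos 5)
Your proof is correct and follows exactly the route the paper intends: establish the ring identity $(x-y)^3=0$ from the hypotheses (the paper states this identity just before the Fact without expanding it) and then apply reducedness. The expansion and substitutions are valid in any commutative ring, so there is nothing to add.
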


Consequently the \elt $a$ here upon is always unique. Moreover  
$\Ann\,b=\Ann\,c=\Ann\,a$.

\subsubsection*{The category of \ptf \Amos} 
\addcontentsline{toc}{subsection}{The category of \ptf \Amos}

A \mptf is a module $M$ which is isomorphic to  a direct summand of a 
 finite rank free module: $M\oplus M'\simeq\gA^m$. Equivalenty, it is  a module isomorphic to  the image of an 
\idm matrix.

An \Ali $\psi:M\to N$ between  \mptfs with  $M\oplus M'\simeq\gA^m$ and 
 $N\oplus N'\simeq\gA^n$ can be given by the linear map 
$\wi{\psi}:\gA^m \to \gA^n$ defined by $\wi{\psi}(x\oplus 
x')=\psi(x)$.

In other words the category of \mptfs over $\gA$ is 
equivalent to  the category whose objects are \idm matrices with \coes in $\gA$, a morphism
from $P$ to $Q$ being a matrix $H$  such that  
$QH=H=HP$. In particular the identity of $P$ is represented by~$P$.

\begin{fact} 
\label{factcatmptf} 
If \mptfs $M$ and $N$ are represented by \idm matrices
$P=(p_{i,j})_{i,j\in I}\in \gA^{I\times I}$ and 
$Q=(q_{k,\ell})_{k,\ell\in J}\in \gA^{J\times J}$, then: 
\begin{enumerate}
\item 
The direct sum $M\oplus N$ is represented by 
 $\Diag(P,Q)=\bloc{P}{0}{0}{Q}$.
\item The tensor product
 $M\otimes N$ is represented by the  Kronecker product
 $$P
 \otimes Q=(r_{(i,k),(j,\ell)})_{(i,k),(j,\ell)\in I\times J}, \;
 \mathrm{where} \;
r_{(i,k),(j,\ell)}=p_{i,j}q_{k,\ell}.
$$
\item \label{item3factcatmptf} $M$ and $N$ are isomorphic  \ssi  matrices $\Diag(P,0_n)$ and  
$\Diag(0_m,Q)$ are similar.
\end{enumerate}
\end{fact}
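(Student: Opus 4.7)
My plan is to work throughout with the concrete realisation $M=\mathrm{Im}\,P\subseteq\gA^m$, $N=\mathrm{Im}\,Q\subseteq\gA^n$, together with the complementary projections $P'=\I_m-P$, $Q'=\I_n-Q$, so that $\gA^m=M\oplus M'$ and $\gA^n=N\oplus N'$ with $M'=\mathrm{Im}\,P'$, $N'=\mathrm{Im}\,Q'$.

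For item 1, I would simply observe that $\mathrm{Diag}(P,Q)$ is \idm and that its image in $\gA^{m+n}=\gA^m\oplus\gA^n$ is $\mathrm{Im}\,P\oplus\mathrm{Im}\,Q\simeq M\oplus N$. For item 2, the routine \vfn is that the Kronecker product of \idm matrices is \idm and that in the standard basis of $\gA^m\otimes\gA^n=\gA^{mn}$, the image of $P\otimes Q$ is exactly $\mathrm{Im}\,P\otimes\mathrm{Im}\,Q$; this reduces to the fact that tensor product commutes with direct sums, applied to the splittings $\gA^m=M\oplus M'$ and $\gA^n=N\oplus N'$.

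Item 3 is the real point. The \gui{easy} direction is that if $U\in\GL_{m+n}(\gA)$ conjugates $\mathrm{Diag}(P,0_n)$ to $\mathrm{Diag}(0_m,Q)$, then $U$ restricts to an \iso of the images, that is to an \iso $M\oplus 0\to 0\oplus N$, hence $M\simeq N$. For the reverse direction, given mutually inverse morphisms $\varphi:M\to N$, $\psi:N\to M$, I represent them by matrices $H:\gA^m\to\gA^n$ and $K:\gA^n\to\gA^m$ with the identities $QH=H=HP$, $PK=K=KQ$, $KH=P$, $HK=Q$. Then I would write down the candidate
$$U=\bloc{\I_m-P}{K}{H}{\I_n-Q}$$
and show by direct matrix computation using these identities that $U\cdot\mathrm{Diag}(P,0_n)=\mathrm{Diag}(0_m,Q)\cdot U$ and $U^{2}=\I_{m+n}$, so $U$ is an involution and in particular invertible, with $U^{-1}=U$.

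The main obstacle, and really the only non-bookkeeping step, is guessing the involution $U$; once it is written down, every entry of $U^{2}$ collapses thanks to the four relations $P^2=P$, $Q^2=Q$, $KH=P$, $HK=Q$ together with $HP=H=QH$ and $KQ=K=PK$. The equivalence of categories asserted in the paragraph preceding the fact then lets me pass from the abstract \Ali $\psi:M\to N$ to the matrices $H,K$ used in the formula, so no further care is needed about the choice of complements.
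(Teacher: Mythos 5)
Your proof is correct and is essentially the paper's argument made explicit: the paper proves item 3 by conjugating the two projections with the automorphism of $M\oplus M'\oplus N\oplus N'$ that exchanges $M$ and $N$ and fixes $M'\oplus N'$, and your involution $U$ is exactly the matrix of that automorphism, so your verifications $U^2=\I_{m+n}$ and $U\,\Diag(P,0_n)=\Diag(0_m,Q)\,U$ merely spell out what the paper leaves implicit. (Items 1 and 2, which the paper does not prove, are handled correctly in your sketch.)
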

%
\begin{proof}
\emph{3.} Remark that the projection on $M$ in $M\oplus M'\oplus \gA^n$ is 
represented by the matrix   $\Diag(P,0_n)$  and  
the projection on $N$ in $\gA^m\oplus N\oplus N'$ is 
represented by the matrix    $\Diag(0_m,Q)$. Writing 
$\gA^m\oplus\gA^n$ as
$M\oplus M'\oplus N \oplus N'$ we see that the two projections are 
conjugate by the \auto exchanging $M$ and~$N$. 
\end{proof}
%

\subsubsection*{Rank of a \mptf} 
\addcontentsline{toc}{subsection}{Rank of a \mptf}
If $\varphi :M\to M$ is an endomorphism of the \ptf \Amo  $M$ image of
the \idm matrix $P\in\gA^{n\times n}$ and if $H\in\gA^{n\times n}$ represents $\varphi$  (with $H=PH=HP$),
we let $N=\Ker\,P$. So $M\oplus N =\gA^n$ and we can define
the \emph{determinant} of $\varphi$ by 
\[\det(\varphi)=\det(\varphi \oplus \Id_N)=\det(H+(\In-P)).\]

Let $\mu_X$ be the multiplication by $X$ inside the $\AX $-module $M[X]$. 
This module, extended of~$M$ from~$\gA$, is  represented 
by the matrix $P$ viewed in $\gA[X]^{n\times n}$. 
Then $\det(\mu_X)=\mathrm{R}_M(X)=r(X)$ is a \pol satisfying $r(XY)=r(X)r(Y)$ and $r(1)=1$. 
In other words its \coes are a \sfioz. The module is said \emph{of rank $k$} if  $r(X)=X^k$.

A direct computation shows the following fact.

\begin{fact} 
\label{factprc1} 
A matrix $P=(p_{i,j})$ is an \idm matrix whose image is a \mrc 1 \ssi the following 
properties are satisfied
\begin{itemize}
\item  $\Vi^2\,P=0$, \cad all $2\times 2$ minors are null,
\item  $\Tr\,P=\sum_{i}p_{ii}=1$.
\end{itemize}
\end{fact}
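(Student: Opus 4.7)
The plan is to work from the standard expansion
\[\det(\In + tP) \;=\; 1 + t\,\sigma_1(P) + t^2\,\sigma_2(P) + \cdots + t^n\,\sigma_n(P),\]
where $\sigma_k(P)$ denotes the sum of the $k\times k$ principal minors of $P$. Specialising at $t = X-1$ yields the rank polynomial $r(X) = \det\bigl(\In + (X-1)P\bigr)$, so the image of $P$ being a \mrc 1 is equivalent to $r(X)=X$, i.e.\ to $\Tr(P) = \sigma_1(P) = 1$ together with $\sigma_k(P) = 0$ for every $k\geq 2$.

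For the $(\Leftarrow)$ direction, assume $\Vi^2\,P = 0$ and $\Tr(P) = 1$. I would first establish idempotence by a direct computation: the vanishing of the $2\times 2$ minor on rows $i,k$ and columns $k,j$ gives $p_{i,k}p_{k,j} = p_{i,j}\,p_{k,k}$, so
\[(P^2)_{i,j} \;=\; \sum_{k} p_{i,k}p_{k,j} \;=\; p_{i,j}\sum_{k} p_{k,k} \;=\; p_{i,j},\]
that is, $P^2 = P$. Laplace expansion then shows inductively that every $k\times k$ minor of $P$ vanishes for $k\geq 2$ (each cofactor reduces ultimately to $2\times 2$ minors), hence $\sigma_k(P) = 0$ for $k\geq 2$. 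Combined with $\sigma_1(P)=1$, the expansion yields $r(X) = 1 + (X-1) = X$, so the image is indeed a \mrc 1.

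For the $(\Rightarrow)$ direction, suppose $P$ is \idm with image $M$ a \mrc 1. Comparing the coefficient of $t$ in the two expressions of $\det(\In + tP)$ immediately yields $\Tr(P) = \sigma_1(P) = 1$. For $\Vi^2\,P = 0$, I would pass through the compound matrix: the matrix whose entries are the $2\times 2$ minors of $P$ is \idm (multiplicativity of compound formation applied to $P^2 = P$) and represents the \Amo $\Vi^2 M$. Since $M$ is locally free of rank $1$, the module $\Vi^2 M$ vanishes locally and hence globally; an \idm matrix with zero image is zero, so $\Vi^2\,P = 0$ and every $2\times 2$ minor of $P$ vanishes.

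The main obstacle is precisely this last step: the relation $\sigma_2(P) = 0$ only encodes that the \emph{principal} $2\times 2$ minors sum to zero, which is strictly weaker than what we need. Killing the non-principal $2\times 2$ minors requires a structural input — here the local triviality of $\Vi^2 M$ for a rank $1$ module — rather than a pure coefficient comparison in $r(X)$.
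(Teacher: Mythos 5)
Your proposal is correct, and it supplies considerably more than the paper does: the paper dismisses this fact with the single sentence \gui{A direct computation shows the following fact} and gives no argument at all. Your $(\Leftarrow)$ direction is exactly the intended direct computation, and it is complete: the identity $p_{i,k}p_{k,j}=p_{i,j}p_{k,k}$ holds for all $i,j,k$ (it is a vanishing $2\times 2$ minor when $i\neq k$ and $k\neq j$, and is trivially true otherwise), so summing over $k$ against $\Tr P=1$ gives $P^2=P$, and Laplace expansion kills $\sigma_k(P)$ for $k\geq 2$, whence $r(X)=\det(\In+(X-1)P)=X$. Your $(\Rightarrow)$ direction, by contrast, is genuinely structural rather than computational, and you correctly identify why it must be: $\sigma_2(P)=0$ only controls the sum of the principal minors. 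The compound-matrix argument is sound ($\bigwedge^2P$ is idempotent with image $\bigwedge^2M$, which vanishes because $M$ has rank $1$, and an idempotent matrix with zero image is zero). Two remarks on fitting it to this paper's constructive setting. First, \gui{locally free, hence locally zero, hence zero} appeals either to localisation at primes or to the local structure theorem for projective modules of constant rank; the latter is constructively available but is heavier machinery than the paper presupposes at this point. Second, there is a way to keep even this direction a \gui{direct computation}: the coefficients of $\det\bigl(\I+t\bigwedge^2P\bigr)$ are universal polynomials in the $\sigma_k(P)$ (over $\ZZ$, if $\det(\I+tP)=\prod_i(1+t\lambda_i)$ then $\det\bigl(\I+t\bigwedge^2P\bigr)=\prod_{i<j}(1+t\lambda_i\lambda_j)$), so $\sigma_1(P)=1$ and $\sigma_k(P)=0$ for $k\geq 2$ force $\det\bigl(\I+t\bigwedge^2P\bigr)=1$; then $\I-\bigwedge^2P$ is invertible and annihilates the idempotent $\bigwedge^2P$, giving $\bigwedge^2P=0$ without any localisation. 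Either route closes the gap you flagged; your proof as written is valid.
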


\subsubsection*{When the image of a projection matrix is free} 
\addcontentsline{toc}{subsection}{When the image of a projection matrix is free}

If $P\in\gA^{n\times n}$ is a \idm matrix whose image  is 
free
of rank $r$, its kernel is not always free, so the matrix 
is not always  similar 
to the standard matrix of projection \[\I_{n,r}=\Diag(\I_{r},0_{n-r})= 
\bloc{\I_{r}}{0} {0}{0_{n-r}}.\]

Let us give a simple \carn for the fact that the image of an \idm matrix is free.

\begin{proposition} 
\label{propImProjLib} 
Let $P\in\gA^{n\times n}$. The matrix 
$P$ is \idm and its image is free of rang $r$  \ssi
there exist two matrices $X\in\gA^{n\times r}$ and  $Y\in\gA^{r\times 
n}$ such that
$YX=\I_r$ and $P=XY$. Moreover,
\begin{enumerate}
\item  $\Im\,P=\Im\,X\simeq \Im\,Y$.
\item  For any matrices $X',Y'$ with same  formats as $X$  and $Y$ 
and such that $P=X'Y'$, there exists
a unique  matrix   $U\in\GL_r(\gA)$ such that $X'=XU$ and $Y=UY'$. In fact
$U=YX'$, $U^{-1}=Y'X$, $Y'X'=\I_r$ and the columns of $X'$ form a 
basis of~$\Im\,P$.
\end{enumerate}
Another possible \carn is that the matrix $\Diag(P,0_r)$ 
is similar  to the standard \prn matrix $\I_{n+r,r}$.
\end{proposition}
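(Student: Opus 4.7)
The plan is to establish the main equivalence by direct matrix manipulations, then harvest part 1 and the final similarity characterisation from what has been built. For the sufficient direction, assuming $YX=\I_r$ and $P=XY$, I would first compute $P^2 = X(YX)Y = XY = P$ (so $P$ is idempotent) and $PX = X(YX)=X$. The chain $\Im P=\Im(XY)\subseteq \Im X=\Im(PX)\subseteq \Im P$ then yields $\Im P=\Im X$; and $YX=\I_r$ makes $X\colon\gA^r\to\gA^n$ a split injection, so its columns are linearly independent, hence a basis of $\Im P$, proving $\Im P$ is free of rank $r$. In the same breath $Y$ is a split surjection so $\Im Y=\gA^r$, and the iso $X\colon\gA^r\to\Im X$ settles part 1. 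For the necessary direction I would pick a basis $v_1,\dots,v_r$ of $\Im P$, assemble the columns of $X$ from them (whence $PX=X$ since $P$ is the identity on its image), and expand each column of $P$ uniquely in the basis to obtain a unique $Y$ with $XY=P$; then $X(YX)=(XY)X=PX=X$ and the linear independence of the columns of $X$ force $YX=\I_r$.

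For part 2, given an arbitrary second factorisation $P=X'Y'$, I would introduce $U=YX'$ and $V=Y'X$ and compute
\[UV = Y(X'Y')X = YPX = Y(XY)X = (YX)^2 = \I_r,\]
using both factorisations of $P$. Since square matrices over a commutative ring with a one-sided inverse are invertible, this gives $U\in\GL_r(\gA)$ with $U^{-1}=V$. I would then set $W=Y'X'$ and carry out the entirely parallel calculation $UW = Y(X'Y')X' = YPX' = (YX)(YX') = U$, which forces $W=\I_r$ after left-multiplying by $U^{-1}$, so $Y'X'=\I_r$. From there $X' = (X'Y')X' = PX' = XYX'=XU$ and $Y=YP=YX'Y'=UY'$ come out at once, and uniqueness of $U$ is obtained by left-multiplying $X'=XU$ by $Y$. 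Finally, the identity $Y'X'=\I_r$ lets the exact argument used for $X$ be repeated to show the columns of $X'$ form a basis of $\Im P$.

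For the last characterisation I would appeal to Fact~\ref{factcatmptf}(\ref{item3factcatmptf}) with the \mptfs $\Im P$ (represented by $P$) and $\gA^r=\Im\I_r$: these are isomorphic \ssi $\Diag(P,0_r)$ is similar to $\Diag(0_n,\I_r)$, which itself is similar to $\I_{n+r,r}=\Diag(\I_r,0_n)$ via a permutation matrix. In the reverse direction, similarity to the idempotent $\I_{n+r,r}$ forces $\Diag(P,0_r)$, hence $P$, to be idempotent, and the same Fact recovers $\Im P\simeq\gA^r$. The step I expect to be the main obstacle is part 2: neither $Y'X'=\I_r$ nor $PX'=X'$ appears in the hypothesis, so the whole argument hinges on the one-sided-inverse trick that promotes $U=YX'$ to an element of $\GL_r(\gA)$; only once $U^{-1}=Y'X$ is in hand does the second calculation give $W=\I_r$ and unlock the remaining identities.
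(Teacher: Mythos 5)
Your proposal is correct and follows essentially the same route as the paper: the same direct computations $P^2=P$, $PX=X$ for sufficiency, the same basis-of-$\Im P$ construction for necessity, the same $U=YX'$, $V=Y'X$, $UV=\I_r$ trick for part 2, and the same appeal to Fact~\ref{factcatmptf} for the final characterisation. The only (harmless) divergences are that you get $\Im Y\simeq\Im P$ from the split surjectivity of $Y$ rather than the paper's exact sequence $\gA^n\vers{\In-P}\gA^n\vers{Y}\gA^r$, and you derive $Y'X'=\I_r$ before the identities $X'=XU$, $Y=UY'$ instead of after.
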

\begin{proof}
Assume that $\Im\,P$ is free of rank $r$.
We take for the columns of $X$ a basis of $\Im\,P$. So, there exists 
a unique 
matrix $Y$ such that  $P=XY$. Since $PX=X$ (because $P^2=P$) one has 
$XYX=X$.
Since  $X$ is injective and $(\I_r-YX)X=0$ 
one has $\I_r=YX$.

\noindent Let us assume $YX=\I_r$ and $P=XY$. Thus 
\[P^2=XYXY=X\I_rY=XY=P\;\hbox{  and }\; PX=XYX=X .\] Hence $\Im\,P=\Im\,X$. Moreover the columns of $X$ are 
independent because $XZ=0$ implies $Z=YXZ=0$.  

\smallskip \noindent \emph{1.} The sequence $\gA^n\vers{\In-P}\gA^n\vers{Y}\gA^r$ is 
exact: indeed
$Y(\In-P)=0$ and if $YZ=0$ then $PZ=0$ thus $Z=(\In-P)Z$. So 
$\Im\,Y\simeq \gA^n\sur{\Ker\,Y}=\gA^n\sur{\Im(\In-P)\simeq \Im\,P}$.

\smallskip \noindent \emph{2.} If   $X'$ and $Y'$ have same formats as $X$ and $Y$,  and if $P=X'Y'$, let  $U=YX'$ and \hbox{$V=Y'X$}.
Thus $UV=YX'Y'X=YPX=YX=\I_r$; $X'V=X'Y'X=PX=X$, so $X'=XU$;
$UY'=YX'Y'=YP=Y$, so $Y'=VY$. Finally $Y'X'=VYXU=VU=\I_r$.

\sni Concerning the last \carn it is a simple application 
of Item \emph{\ref{item3factcatmptf}} in Fact~\ref{factcatmptf}.
\end{proof}

For \mrcs 1 we get the following.

\begin{lemma} 
\label{lempropImProjLib} 
 An \idm matrix  $P$ of rank $1$ has its image free \ssi there exist
  a column vector $x$ and a row vector $y$ such that  $yx=1$ 
and $xy=P$. Moreover  $x$ and $y$  are unique up to multiplication by a unit as soon as $xy=P$. 
\end{lemma}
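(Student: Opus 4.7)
The plan is simply to specialize Proposition \ref{propImProjLib} to the case $r=1$, since Lemma \ref{lempropImProjLib} is exactly the rank-one shadow of that proposition. So I would not reprove anything from scratch; I would just unpack what ``$r=1$'' means for the objects involved.

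First I would observe that when $r=1$, a matrix $X\in\gA^{n\times 1}$ is nothing but a column vector $x$, a matrix $Y\in\gA^{1\times n}$ is nothing but a row vector $y$, and the identity $YX=\I_1$ is the scalar equation $yx=1$. Hence the equivalence in Proposition \ref{propImProjLib} becomes: $P$ is an \idm matrix whose image is free of rank $1$ \ssi there exist $x$ (column) and $y$ (row) with $yx=1$ and $P=xy$. This is exactly the first assertion of the lemma.

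For the uniqueness claim, I would invoke item 2 of Proposition \ref{propImProjLib}. Suppose we already have a pair $(x,y)$ with $yx=1$ and $xy=P$, and another pair $(x',y')$ of the same formats with merely $x'y'=P$ (no extra hypothesis). Item 2 yields a unique $U\in\GL_1(\gA)$ with $x'=xU$ and $y=Uy'$; since $\GL_1(\gA)=\gA^\times$ is precisely the group of units of $\gA$, setting $u:=U$ gives the statement ``unique up to multiplication by a unit.'' The proposition even provides the explicit formulas $u=yx'$ and $u^{-1}=y'x$, together with $y'x'=1$, which one can record as a bonus but which is not demanded by the lemma.

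There is really no obstacle here, because the lemma is a transparent rephrasing of the preceding proposition; the only point requiring care is the bookkeeping of formats (ensuring that a $1\times n$ row times an $n\times 1$ column is a scalar and not a matrix) and the identification $\GL_1(\gA)=\gA^\times$ that makes ``unit'' the right word.
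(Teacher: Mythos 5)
Your proposal is correct and matches the paper's intent exactly: the paper states the lemma immediately after Proposition \ref{propImProjLib} with the remark that it is the rank-one case, and gives no separate proof. Your unpacking of $r=1$ (column/row vectors, $YX=\I_1$ becoming $yx=1$, and $\GL_1(\gA)=\gA^\times$ for the uniqueness clause via item 2) is precisely the intended argument.
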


     
\subsubsection*{The Grothendieck  semiring 
\texorpdfstring{$\GKO\,\gA$ and the Picard group $\Pic\,\gA$}{GKO(A) 
and  Pic(A)}} 
\addcontentsline{toc}{subsection}{$\GKO\,\gA$ and $\Pic\,\gA$}

$\GKO\,\gA$ is the set of \iso classes of \mptfs over $\gA$.
It is a semiring for laws $\oplus$ and 
$\otimes$. 

Since $\gA$ is assumed to be commutative, the subsemiring of  
$\GKO\,\gA$ generated by $1$ (the \iso class of $\gA$) is 
isomorphic to  $\NN$, except when $\gA$ is the trivial ring. 

Any \elt of $\GKO\,\gA$ can be represented by an 
\idm matrix with \coes in~$\gA$.

$\Pic\,\gA$ is the subset of $\GKO\,\gA$ whose \elts are \iso 
classes of \mrcs 1.
It is a group for $\otimes$. The \gui{inverse} of $M$ 
is its dual. If $M\simeq \Im\,P$ then $M\sta\simeq 
\Im\,\tra{P}$. In particular
if $P$ is an \idm matrix of rank 1, $P\otimes \tra{P}$ is an
\idm matrix whose image is a free module of rank 1.

This can be verified directly by applying Lemma~\ref{lempropImProjLib}. 

\subsubsection*{\texorpdfstring{$\Pic\,\gA$}{Pic(A)} and 
classes of invertible ideals} 
\addcontentsline{toc}{subsection}{$\Pic\,\gA$ and 
classes of invertible ideals}

An \id $\fa$ of $\gA$  is  \emph{invertible} if there exists an \id 
$\fb$ such that  $\fa\fb=a\gA$ where $a$ is a regular \eltz.
In this case there exist $\xn$ and $\yn$ in $\gA$ such that  
$\fa=\gen{\xn}$, $\fb=\gen{\yn}$ and $\sum_ix_iy_i=a$. Moreover 
for all $i,j$ there exists a unique  $m_{i,j}$ such that  
$y_ix_j=am_{i,j}$. 
One deduces that the matrix $(m_{i,j})$ is an \idm matrix  of rank 1, 
and its image is isomorphic to  $\fa$ as \Amoz.

Two invertible \ids  $\fa,\fb$ are isomorphic  as \Amos \ssi 
there exist  regular \elts $a,b$ such that  $a\fa=b\fb$. 
This allows to see the class group of $\gA$ (i.e., the group of  classes of invertible \idsz) as a subgroup of $\Pic\,\gA$. In most cases the  two 
groups are identical.

For example if $\gA$ is integral,
any matrix $(a_{i,j})$ which is \idm of rank $1$ has a regular \elt on
its diagonal and the \coes of the corresponding row generate an invertible \id 
isomorphic to  the image of the matrix.

\subsubsection*{Change of ring} 
\addcontentsline{toc}{subsection}{Change of ring}

Let $\rho$ be an \homo $\gA\to\gB$. The change of ring from 
$\gA$ to $\gB$ transforms a
\mptf $M$  over $\gA$ in a \mptf $\rho\ista(M)\simeq M\otimes_\gA\gB$ over $\gB$. 
Any \Bmo isomorphic to  such a module $\rho\ista(M)$ is said 
\gui{extended} from $\gA$.
For \idm matrices this amounts to consider the  matrix
after transformation by the \homoz~$\rho$.

This gives an \homo $\GKO\,\rho: \GKO\,\gA\to\GKO\,\gB$.
Whence the natural following \pbz: \gui{Is each \mptf over $\gB$ 
extended form a \mptf over $\gA$?}.
In other words: \gui{Is $\GKO\,\rho$ onto?}.

For example if $\gZ$ is the subring of $\gA$  generated by 
$1_\gA$,
we know that  $\gZ$-\mrcs  are free, and the question 
\gui{Are  \mrcs extended from $\gZ$?} is 
equivalent to
\gui{Are  \mrcs   free?}.

When $\gB=\AXm=\AuX$, one has the evaluation \homo in 0,  
$\gB\vers{\theta}\gA$, with $\theta\circ\rho=\Id_\gA$. 
This implies that the \ptf \Bmo $M=M(\uX)$ is extended from $\gA$
\ssi it is 
isomorphic to  $M(0)=\theta\ista(M)$. 

Concerning \prn matrices,
an \idm matrix  $P\in\gB^{n\times n}$ represents a module which is 
extended from $\gA$ \ssi its image is isomorphic to  the image of 
$P(0)$. 

If all  \ptf \Bmos are extended from $\gA$ then $P$ is similar to $P(0)$, 
but it may be easier to show only the isomorphim of the images. 

Concerning $\Pic$ one has  two group \homos 
$\Pic\,\gA\vers{\Pic\,\rho}\Pic\,\AuX\vers{\Pic\,\theta}\Pic\,\gA$
whose composition is the identity. The first one is injective, the 
second one surjective, and they are \isos \ssi the first one is surjective, 
\ssi the second one is injective.

The last property means that if a matrix $P(\uX)$  is \idm 
of rank 1 over $\AuX$ and if $\Im(P(0))$ is free, 
then  $\Im(P(\uX))$ is free.

In fact if  $\Im(P(0))$ is free, then the  bloc diagonal matrix 
$\Diag(P(0),0_1)$ is similar to a standard \prn matrix 
 $\I_{n+1,1}$. As $\Im(\Diag(P(\uX),0_1))$ is isomorphic to  
$\Im\,P(\uX)$, we get the following result.

\begin{lemma} 
\label{lemPicPic1} \Propeq
\begin{enumerate}
\item The natural \homo  $\Pic\,\gA\to\Pic\,\AuX$ is an \isoz.
\item If a matrix $P(\uX)\in\AuX^{n\times n}=(m_{i,j}(\uX))_{i,j\in \so{1,\ldots 
,n}}$  is \idm 
of rank 1 and if $P(0)=\I_{n,1}$, then  there exist 
$f_1,\ldots, f_n, g_1,\ldots, g_n\in\AuX$ such that  $m_{i,j}=f_ig_j$ 
for all $i,j$.
\end{enumerate}
\end{lemma}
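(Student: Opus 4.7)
The plan is to pass through the characterisation of the natural map $\Pic\,\rho:\Pic\,\gA\to\Pic\,\AuX$ as being split-injective (via the retraction $\Pic\,\theta$), so that (1) is equivalent to surjectivity of $\Pic\,\rho$, and then to reformulate surjectivity in terms of free images of idempotent matrices using Lemma~\ref{lempropImProjLib}.

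First I would record the following chain of equivalences, already implicit in the paragraph preceding the lemma. Condition (1) holds \ssi $\Pic\,\rho$ is onto, \ssi every rank-1 \idm matrix $Q(\uX)\in\AuX^{n\times n}$ has $\Im\,Q(\uX)$ isomorphic to an extended module, namely $\Im\,Q(0)\otimes_\gA \AuX$. By tensoring with $(\Im\,Q(0))\sta$ and using that $\Pic\,\AuX$ is a group, this last property reduces to the case where $\Im\,Q(0)$ is free of rank $1$: (1) holds \ssi for every rank-1 \idm $Q(\uX)$ with $\Im\,Q(0)$ free, $\Im\,Q(\uX)$ is free.

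Next I would perform the block-diagonal normalisation to bring us into the precise hypothesis of (2). Assume $\Im\,Q(0)$ is free of rank $1$; then by the last part of Proposition~\ref{propImProjLib}, $\Diag(Q(0),0_1)$ is similar to $\I_{n+1,1}$ via some $U\in\GL_{n+1}(\gA)$. Setting $P(\uX):=U\,\Diag(Q(\uX),0_1)\,U^{-1}\in\AuX^{(n+1)\times(n+1)}$ gives a rank-1 \idm with $P(0)=\I_{n+1,1}$, and $\Im\,P(\uX)\simeq\Im\,Q(\uX)$ (Fact~\ref{factcatmptf}.3). Conversely, any $P(\uX)$ with $P(0)=\I_{n,1}$ has $\Im\,P(0)$ free, so one direction is trivial. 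Hence (1) is equivalent to: for every rank-1 \idm $P(\uX)$ with $P(0)=\I_{n,1}$, $\Im\,P(\uX)$ is free of rank $1$.

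Finally I would connect this reformulation directly to (2) via Lemma~\ref{lempropImProjLib}. If $\Im\,P(\uX)$ is free of rank $1$, that lemma gives a column $x=(f_i)$ and a row $y=(g_j)$ with $P(\uX)=xy$ and $yx=1$, so $m_{i,j}=f_ig_j$. Conversely, given such $f_i,g_j$, set $x=(f_i)^{\mathrm t}$ and $y=(g_j)$; then $P(\uX)=xy$, and since $P(\uX)$ is rank-1 \idmz, Fact~\ref{factprc1} yields $yx=\sum f_ig_i=\Tr\,P(\uX)=1$, so Lemma~\ref{lempropImProjLib} applies and $\Im\,P(\uX)$ is free. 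Composing these equivalences proves the lemma.

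The only point requiring real care is the passage between \gui{$\Im\,P(0)$ free} and \gui{$P(0)=\I_{n,1}$}: the block-diagonal trick $\Diag(\cdot,0_1)$ is needed precisely because, although the image is free, the matrix $P(0)$ itself need not be similar to $\I_{n,1}$ (its kernel may fail to be free). That is the one step worth spelling out in detail; everything else is a matter of chaining the preceding facts.
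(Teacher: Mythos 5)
Your proof is correct and follows essentially the same route as the paper, which establishes the lemma in the discussion immediately preceding it: reduce statement (1) to \gui{$\Im\,P(0)$ free implies $\Im\,P(\uX)$ free}, normalise $P(0)$ to $\I_{n+1,1}$ via the similarity $\Diag(P(0),0_1)\sim\I_{n+1,1}$ from Proposition~\ref{propImProjLib}, and translate freeness of a rank-1 image into the factorisation $m_{i,j}=f_ig_j$ by Lemma~\ref{lempropImProjLib}. The only cosmetic difference is that you reach the intermediate statement through surjectivity of $\Pic\,\rho$ plus tensoring with the dual, whereas the paper reads it off directly as injectivity of $\Pic\,\theta$ --- the paper itself notes these two conditions are equivalent.
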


\subsubsection*{Reducing problems to reduced rings: 
\texorpdfstring{$\GKO\,\Ared = \GKO\,\gA$}{GKO(Ared)=GK0(A)}} 
\addcontentsline{toc}{subsection}{Reducing problems to reduced rings}

We note $\Ared$ for  $\gA\sur{\sqrt{0}}$.
\begin{proposition} 
\label{propComparRed} 
The natural map $\GKO(\gA)\to\GKO(\Ared)$ is bijective. 
\begin{enumerate}
\item Injective:  this means that  if two \mptfs $E,F$  over 
$\gA$ are isomorphic  over $\Ared$, they are also isomorphic over $\gA$.
\item If
two \idm matrices $P,Q\in\gA^{n\times n}$  are conjugate  over 
$\Ared$, they are \egmt conjugate over $\gA$.
\item Surjective:  any \mptf over $\Ared$ comes from a \mptf 
over~$\gA.$ 
\end{enumerate}
\end{proposition}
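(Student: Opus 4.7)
The plan rests on a single general principle: the nilradical $\fn=\sqrt 0$ of $\gA$ acts on matrices in a very mild way. Concretely, if $E\in\gA^{n\times n}$ has all entries in $\fn$, then the (finitely many) entries generate a nilpotent ideal $I\subseteq\fn$, so $E$ is itself nilpotent as a matrix; in particular $\In+E$ is invertible in $\gA^{n\times n}$, and a unit of $\Ared$ lifts to a unit of $\gA$. I will use this repeatedly.

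For surjectivity (item~3), I start from an idempotent $\ov P$ over $\Ared$, lift its entries arbitrarily to $P_0\in\gA^{n\times n}$, and set $E_0=P_0^{\,2}-P_0$. Then $E_0$ has entries in $\fn$. I then run the Newton-style iteration $P_{k+1}=3P_k^{\,2}-2P_k^{\,3}$. A short calculation (expanding $P_{k+1}^{\,2}-P_{k+1}$ using $P_k^{\,2}=P_k+E_k$) gives the clean identity $E_{k+1}=E_k^{\,2}(4E_k-3)$. Hence entries of $E_k$ lie in the $2^k$-th power of the (nilpotent) ideal generated by the entries of $E_0$, so $E_k=0$ for some $k$. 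The resulting $P_k$ is an idempotent lifting $\ov P$.

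For item~2, I first lift the given conjugation $\ov U\in\GL_n(\Ared)$ to an arbitrary $U_0\in\gA^{n\times n}$; since $\det U_0$ maps to a unit of $\Ared$, it is a unit of $\gA$ by the principle above, so $U_0\in\GL_n(\gA)$. Replacing $Q$ by $U_0^{-1}QU_0$, I may assume $P\equiv Q\pmod\fn$. Then I take the key intertwiner
\[U\;=\;QP+(\In-Q)(\In-P).\]
Using $P^2=P$, $Q^2=Q$ and $Q(\In-Q)=(\In-P)P=0$, one checks directly that $UP=QP=QU$. A rewrite yields $U=\In+(2Q-\In)(P-Q)$, so $U-\In$ has entries in $\fn$, so $U\in\GL_n(\gA)$. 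Therefore $UPU^{-1}=Q$, proving item~2. Item~1 is then immediate from item~2 combined with Fact~\ref{factcatmptf}(\ref{item3factcatmptf}): padding representing matrices $P,Q$ with zero blocks of appropriate sizes, the hypothesis $E\simeq F$ over $\Ared$ says that $\Diag(P,0_n)$ and $\Diag(0_m,Q)$ are conjugate over $\Ared$; item~2 lifts this conjugation to $\gA$, and Fact~\ref{factcatmptf}(\ref{item3factcatmptf}) again converts it into an \iso $E\simeq F$ over $\gA$.

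The only real obstacle is the bookkeeping around \emph{constructivity}: I must be sure that \gui{entries in $\fn$} gives a truly nilpotent matrix (not just entrywise nilpotent), which requires the finite-generation argument for the ideal they generate, and I must choose formulas --- Newton's iteration for lifting idempotents, and $QP+(\In-Q)(\In-P)$ for lifting conjugations --- that work uniformly without case splits. Once these two formulas are in hand, the proofs reduce to direct polynomial manipulations in a commutative subring of $\gA^{n\times n}$, with no appeal to LEM or choice.
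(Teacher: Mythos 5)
Your proposal is correct and follows essentially the same route as the paper: the same Newton iteration $P\leftarrow 3P^2-2P^3$ for lifting idempotents (item 3), the same intertwiner $QP+(\In-Q)(\In-P)$ after normalising the conjugating matrix (item 2), and the same reduction of item 1 to item 2 via Fact~\ref{factcatmptf}. Your explicit identities $E_{k+1}=E_k^2(4E_k-3)$ and $U=\In+(2Q-\In)(P-Q)$ are just slightly more detailed versions of the paper's verifications, and your care about the finitely generated nilpotent ideal is exactly the constructive point the paper relies on.
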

\begin{proof}
\emph{2.} Let us note $\ov{x}$ the object $x$  viewed modulo $\sqrt{0}$. Let 
$C\in\gA^{n\times n}$ be a matrix such that  $\ov{C}\, \ov{P}\, {\ov{C}}^{-1}=\ov{Q}$. Since $\det\,C$ is invertible modulo $\sqrt{0}$, 
$\det\,C$ is invertible in $\gA$ and $C $ belongs to $\GL_n(\gA)$. Thus 
$\ov{Q}=\ov{C\,P\,C^{-1}}$. Replacing  $P$ 
by $C\,P\,C^{-1}$ we may assume $\ov{Q}=\ov{P}$ and $\ov C=\In$. 
Then the 
matrix $A=QP+(\In-Q)(\In-P)$ gives $AP=QP=QA$  and $\ov{A}=\In$: thus $A$ is invertible, $APA^{-1}=Q$ and $\ov A=\ov C$.

\smallskip \noindent \emph{1.} Two  residually isomorphic \mptfs  $E\simeq\Im\,P$ and 
$F\simeq\Im\,Q$ are images of residually conjugate matrices: 
$\Diag(P,0_m)$ and  $\Diag(0_n,Q)$ with $\Diag(\ov{P},0_m)$ similar  
to $\Diag(0_n,\ov{Q})$ (see Fact \ref{factcatmptf}). Thus we can apply Item \emph{1.}

\smallskip \noindent \emph{3.} Any  \mptf over $\Ared$ can be seen as the residual module of a
\mptf over $\gA$: apply Newton method. More precisely let $\fa$ be the \id generated by the \coes of $P^2-P$. If $\fa$ is contained in the nilradical of $\gA$, there exists~$k$ such that  $\fa^{2^k}=0$. On the other hand  if $Q=3P^2-2P^3$, then $Q\equiv P \mod \fa$ and $Q^2-Q$ is a multiple of 
$(P^2-P)^2$, thus $Q^2 - Q$ has its \coes in $\fa^2$. Iterating 
$k$ times the operation $P\leftarrow 3P^2-2P^3$ we get the result.
\end{proof}

\begin{corollary} 
\label{corpropComparRed} 
The canonical \homo  $\Pic\,\gA\to \Pic\,\AuX$ is an \iso  \ssi 
the canonical \homo  $\Pic\,\Ared\to \Pic\,\Ared[\uX]$ is an \isoz.
\end{corollary}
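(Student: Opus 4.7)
The plan is to build the obvious commutative square
\[
\begin{array}{ccc}
\Pic\,\gA & \longrightarrow & \Pic\,\AuX \\
\downarrow & & \downarrow \\
\Pic\,\Ared & \longrightarrow & \Pic\,\Ared[\uX]
\end{array}
\]
whose horizontal arrows are the canonical maps induced by $\gA\to\AuX$ and $\Ared\to\Ared[\uX]$, and whose vertical arrows come from the reduction \homosz. Once I know that both vertical arrows are \isosz, the corolary reduces to a trivial diagram chase: in any commutative square of groups with vertical \isos, the top arrow is an \iso \ssi the bottom one is.

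First I would observe that Proposition~\ref{propComparRed} gives $\GKO\,\gA\simeq\GKO\,\Ared$, and since the rank \pol $\rV_M(X)$ of a \mptf is invariant under the \homoz~$\gA\to\Ared$ (the rank of the image of an \idm matrix $P$ is read off from $\det(XP+(\In-P))$, whose image in $\Ared[X]$ equals its computation modulo the nilradical), this bijection sends \mrcs 1 to \mrcs 1. Hence the natural map $\Pic\,\gA\to\Pic\,\Ared$ is a group \isoz, and similarly for $\AuX$ in place of $\gA$.

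For the right vertical arrow I need to identify $(\AuX)\red$ with $\Ared[\uX]$. This comes from the standard constructive lemma that a \pol $f\in\gA[\uX]$ is nilpotent \ssi each of its \coes is nilpotent; the quotient of $\AuX$ by its nilradical is thus naturally the \pol ring over $\Ared$. So $\Pic\,\AuX\to\Pic\,(\AuX)\red=\Pic\,\Ared[\uX]$ is an \iso by the previous paragraph applied to the ring~$\AuX$.

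Commutativity of the square is just naturality of $\Pic$ applied to the evident commutative diagram of rings
$\gA\to\AuX,\ \gA\to\Ared,\ \AuX\to\Ared[\uX],\ \Ared\to\Ared[\uX]$.
The main point requiring care is really the identification $(\AuX)\red=\Ared[\uX]$; it is routine classically but one should make sure it is stated and proved \cot (by induction on the number of variables and on degree, using that in a \pol $f=a_0+a_1X+\cdots+a_dX^d$ with $f^n=0$ one first gets $a_d^n=0$, whence $a_d$ nilpotent, whence $f-a_dX^d$ nilpotent, and one iterates). Everything else is formal.
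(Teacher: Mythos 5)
Your proof is correct and follows exactly the route the paper intends: the corolary is stated as an immediate consequence of Proposition~\ref{propComparRed}, and your commutative square with the two vertical \isos (via the identification $(\AuX)\red=\Ared[\uX]$ and the preservation of rank under reduction) is precisely the argument being left to the reader. The only detail worth making fully explicit is that a lift of a rank-one module is again of rank one, which follows since the \coes of the rank \pol are \idms and two \idms with nilpotent difference coincide.
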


\begin{convention} 
\label{convPicPic} 
In the sequel we abbreviate the sentence \gui{the canonical \homo  
$\Pic\,\gA\to \Pic\,\AuX$ is an \isoz} and we write simply 
\gui{$\Pic\,\gA=\Pic\,\AuX$}.
\end{convention}

\subsubsection*{Invertible \elts of 
\texorpdfstring{$\gA[\protect\underline{X}]$}{A[X]}} 
\addcontentsline{toc}{subsection}{Invertible \elts of 
$\AX $}

\begin{lemma} 
\label{lemUnitRedX} 
If the ring $\gA$ is reduced, the group \homo  
$\gA^{\times}\to(\AuX)^{\times}$ is an \isoz.
In other words if $f(\uX)\in\AuX$ is invertible, then 
$f=f(0)\in\gA^{\times}$.
\end{lemma}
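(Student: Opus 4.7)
The plan is to reduce to the one-variable case $m=1$ by induction. The group \homo $\gA^{\times}\to(\AuX)^{\times}$ is clearly injective, since evaluation at $0$ is a left inverse. For the nontrivial (surjective) direction, I would write $\AuX=\gA[X_1,\ldots,X_{m-1}][X_m]$ and apply the $m=1$ case, provided I know that $\gA[X_1,\ldots,X_{m-1}]$ is still reduced. The latter follows from a quick leading-coefficient argument: if $f=\sum_{i=0}^n a_iX^i\in\gA[X]$ satisfies $f^k=0$, the coefficient of $X^{nk}$ in $f^k$ is $a_n^k$, so $a_n^k=0$, hence $a_n=0$ since $\gA$ is reduced; iterating gives $f=0$, and the several-variables case follows by the same argument applied to any monomial order.

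The heart of the proof is thus the one-variable case: assuming $fg=1$ with $f=\sum_{i=0}^n a_iX^i$ and $g=\sum_{j=0}^p b_jX^j$, I must show $a_i=0$ for all $i\geq 1$. The constant term of $fg$ gives $a_0b_0=1$, so $a_0$ and $b_0$ are units; the top coefficient gives $a_nb_p=0$. The key claim is that $a_n$ is nilpotent, which I would prove by descending induction on $k\in\{p,p-1,\ldots,0\}$:
\[
a_n^{\,p-k+1}\,b_k\;=\;0.
\]
The base $k=p$ is just $a_nb_p=0$. For the inductive step, multiply the identity $\sum_{i+j=n+k}a_ib_j=0$ (coming from the coefficient of $X^{n+k}$ in $fg$) by $a_n^{\,p-k}$; every term with $j>k$ is killed by the induction hypothesis, leaving $a_n^{\,p-k+1}b_k=0$.

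Taking $k=0$ gives $a_n^{\,p+1}b_0=0$; multiplying by the unit $a_0$ (inverse of $b_0$) yields $a_n^{\,p+1}=0$. Since $\gA$ is reduced, $a_n=0$. Then $f=\sum_{i=0}^{n-1}a_iX^i$ is still a unit of $\gA[X]$ with the same inverse $g$, so a straightforward induction on $\deg f$ kills all higher coefficients in turn, leaving $f=a_0\in\gA^{\times}$. Combined with the reduction above, this gives $f(\uX)=f(0)\in\gA^{\times}$.

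There is no real obstacle; the whole argument is explicit and fully \cov, relying only on the definition of \gui{reduced} to turn nilpotence into equality with zero. The one point requiring care is the exponent bookkeeping in the descending induction: the nilpotence index $p-k+1$ grows as $k$ decreases, which is exactly what makes the cascade close at $k=0$.
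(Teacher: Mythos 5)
Your proof is correct and follows exactly the route the paper sketches: reduce to one variable (noting that $\AX$ stays reduced), show by the standard descending induction on the coefficients of the inverse that the leading coefficient of $f$ is nilpotent, hence zero, and iterate. The paper leaves the "direct computation" unstated; your exponent bookkeeping $a_n^{\,p-k+1}b_k=0$ is the correct way to fill it in.
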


It is sufficient to consider $\AX $.
A direct computation shows that if $f(X)g(X)=1$ with $\deg(f)\leq m$, $m\geq 1$, then the \coe of degree $m$  of $f$  is nilpotent.
\subsubsection*{Kronecker's \tho} 
\addcontentsline{toc}{subsection}{Kronecker's \tho}

\begin{theorem} 
\label{thKro} 
Let $f,g\in\AuX$ and $h=fg$. Let $a$ be a \coe of $f$  and $b$ a \coe 
of~$g$, then $ab$ is integral over the subring of $\gA$ generated  
by the \coes of $h$. 
\end{theorem}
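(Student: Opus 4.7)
My plan is to reduce to a "generic" situation where we have maximal flexibility, and then exploit a splitting construction. First, I would reduce to the case of a single variable via Kronecker's substitution $X_k \mapsto X^{d^{k-1}}$ for $d$ larger than every partial degree appearing in $f$ and $g$; this is a ring injection on the relevant polynomials because no two distinct monomials collide. Second, I would reduce to the universal one-variable case where $\gA = \ZZ[A_0,\ldots,A_m,B_0,\ldots,B_n]$ is a polynomial ring in indeterminate coefficients, since any identity of integral dependence proved there specialises to arbitrary $\gA$.

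Next, I would construct an extension $\gA \subset \gA'$ (a splitting ring) in which $f = a_m\prod_{k=1}^m (X-u_k)$ and $g = b_n\prod_{l=1}^n (X-v_l)$. This extension is built by iterating the classical construction of adjoining a root of a monic factor. Every $u_k$ and $v_l$ is a root of $h = fg$, so substituting into $h(w) = \sum_{s=0}^{m+n} c_s w^s = 0$ with $c_{m+n} = a_m b_n$ and multiplying by $(a_m b_n)^{m+n-1}$ gives
\[
\widetilde w^{\,m+n} + c_{m+n-1}\widetilde w^{\,m+n-1} + c_{m+n-2}c_{m+n}\widetilde w^{\,m+n-2} + \cdots + c_0 c_{m+n}^{m+n-1} = 0,
\]
where $\widetilde w := a_m b_n w$. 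Thus each $\widetilde u_k = a_m b_n u_k$ and each $\widetilde v_l = a_m b_n v_l$ is integral over $B := \ZZ[c_0,\ldots,c_{m+n}]$.

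For the final step, one writes $a_i b_j = \pm a_m b_n\, e_{m-i}(u)\, e_{n-j}(v)$ using Vieta. Substituting $u_k = \widetilde u_k/(a_m b_n)$ and $v_l = \widetilde v_l/(a_m b_n)$ yields $a_i b_j = \pm (a_m b_n)^{i+j-m-n+1}\, e_{m-i}(\widetilde u)\, e_{n-j}(\widetilde v)$. When $i+j \ge m+n-1$ the exponent is $\ge 0$, so $a_i b_j$ is an explicit $B$-polynomial in integral elements and is itself integral. This is the easy case, handled by the top-degree coefficients (for example $c_{m+n} = a_m b_n$). The main obstacle is the remaining case $i+j < m+n-1$, where the above formula requires negative powers of $a_m b_n = c_{m+n}$; it only shows $c_{m+n}^N a_i b_j$ integral over $B$, not $a_i b_j$ itself, so integrality holds only after localising at $c_{m+n}$.

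To overcome this, I would pass to a Galois-orbit argument: let $S_{m+n}$ act on $\gA'$ by permuting the $m+n$ roots $\{u_k\}\cup\{v_l\}$ of $h$; the element $a_i b_j$ is invariant only under $S_m\times S_n$, but its orbit under $S_{m+n}$ is finite. The polynomial $P_{i,j}(T) := \prod_{\sigma\in S_{m+n}/(S_m\times S_n)}(T - \sigma\!\cdot a_i b_j)$ is monic in $T$ and its coefficients are $S_{m+n}$-invariant, hence bi-symmetric expressions in $\widetilde u_k, \widetilde v_l$ scaled appropriately; the bookkeeping with the weights of the elementary symmetric functions in the roots then shows these coefficients lie in $B$ (rather than merely in $B[1/c_{m+n}]$). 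Carrying out this weight computation rigorously is the technical heart of the proof, and is the step where I expect the calculation to be most delicate; as a fallback, one can instead argue by induction on $m+n$ using the identity $a_m b_j = c_{m+j} - \sum_{i'<m,\, i'+j' = m+j} a_{i'}b_{j'}$ to reduce to integrality of products with one "extremal" index and then close the loop using the reciprocal polynomials $f^\star, g^\star$.
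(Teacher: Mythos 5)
The paper does not actually prove this theorem: it only performs the multivariate-to-univariate reduction (the same Kronecker substitution you use) and then refers to Edwards, Hurwitz and the survey \cite{CDLQ} for the univariate case. Your proposal therefore attempts more than the text does, and the route you sketch --- universal indeterminate coefficients, a splitting ring for $h$, the monic equation satisfied by $\widetilde w=a_mb_nw$, and the product over the $S_{m+n}/(S_m\times S_n)$-orbit of $a_ib_j$ --- is precisely the classical Kronecker--Hurwitz argument contained in those references. The two reductions and the integrality of the $\widetilde u_k,\widetilde v_l$ over $\ZZ[c_0,\dots,c_{m+n}]$ are correct as you state them.

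The step you yourself flag as delicate is, however, the one place where your sketch, taken literally, would fail: a pure weight (isobaric degree) count does not close the argument. The $k$-th elementary symmetric function of the orbit is $c_{m+n}^k$ times a symmetric polynomial of total degree $k\bigl((m-i)+(n-j)\bigr)$ in the roots, and a symmetric polynomial of weight $D$ may involve the monomial $e_1^D$, which costs $D$ inverse powers of $c_{m+n}$ --- far more than the $k$ powers you have available once $(m-i)+(n-j)\geq 2$. What saves the argument is multilinearity, not weight: each conjugate $c_{m+n}\,e_{m-i}(w_S)\,e_{n-j}(w_{S^c})$ has degree at most $1$ in each individual root $w_t$ (since $w_t$ lies in exactly one of $S$, $S^c$, and elementary symmetric polynomials are multilinear in the roots), so the $k$-th elementary symmetric function of the orbit has degree at most $k$ in each root; and a symmetric polynomial whose partial degrees are all at most $k$ is a polynomial of \emph{total degree at most $k$} in $e_1(w),\dots,e_{m+n}(w)$ (this follows from the usual lexicographic algorithm). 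Since $e_l(w)=\pm c_{m+n-l}/c_{m+n}$, the prefactor $c_{m+n}^k$ then exactly clears every denominator, and the coefficients of your $P_{i,j}(T)$ land in $\ZZ[c_0,\dots,c_{m+n}]$ as required. With the word ``weights'' replaced by this partial-degree bookkeeping, your plan is a complete and correct proof of the univariate case, hence of the theorem.
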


Using \gui{the Kronecker trick} (i.e., replace each variable 
$X_k$ with $T^{m^k}$ for an $m\gg 0$) reduces the problem to univariate \polsz.
For univariate \pols \prcos  are given in the literature
(cf. \cite{Ed,Hu}, and for a survey \cite{CDLQ}).

\section{Traverso-Swan \thoz, with integral rings} 
\label{sec2}
\markboth{Seminormalrings}{Traverso-Swan \thoz, with integral rings}
\subsubsection*{The condition is necessary: Schanuel example} 
\addcontentsline{toc}{subsection}{Schanuel example}

We show that if $\gA$ is reduced and $\Pic\,\gA=\Pic\,\AX $ then 
$\gA$ is seminormal. 
We use the \carn given in Lemma \ref{lempropImProjLib}.

Let $b,c$ be \elts in a  reduced ring $\gA$ with $b^2=c^3$. Let $\gB=\gA[a]=\gA+a\gA$ be 
a reduced ring containing $\gA$ with $a^3=b, \,a^2=c$.
Let $f_1=1+aX$, $f_2=cX^2=g_2$ and $g_1=(1-aX)(1+cX^2)$.
We have $f_1g_1+f_2g_2=1$, thus  the matrix $M(X)=(f_ig_j)_{1\leq i,j\leq2}$ is \idme of rank $1$.
Its \coes are in $\gA$ and  
$M(0)=\I_{2,1}$.
Thus its image is free over $\gB[X]$. If it is free over $\AX $ 
then there exist  $f'_i$'s and $g'_j$'s in $\AX $ with $f'_ig'_j=f_ig_j$.
By unicity $f'_i=uf_i$ with $u$ invertible in $\AX $. Since $\gA$
is reduced $u$ is invertible in $\gA$. Since $uf_1\in\AX$ we get $a\in\gA$.

\sni NB: we can take $\gB=\left(\aqo{\gA[T]}{T^2-c, T^3-b} 
\right)\red$, with $a=$ class of $T$. If some $a$ does exist in~$\gA$, we get $\gB\simeq\gA$.
\subsubsection*{Case of a gcd domain} 
\addcontentsline{toc}{subsection}{Case of a gcd domain}

Let us recall that a gcd domainn is an integral ring where
two arbitrary \elts have a gcd,
\cad a lower bound for the divisibility relation.
Also if $\gA$ is a gcd domain, then $\AuX$ is a gcd domain. 
\begin{lemma} 
\label{lemPicGcd} 
If $\gA$ is a gcd domain, $\Pic\,\gA=\so{1}$.
\end{lemma}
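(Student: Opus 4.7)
Proof plan for Lemma \ref{lemPicGcd}:

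My plan is to show that every rank-$1$ idempotent matrix over a gcd domain $\gA$ has free image, then conclude via Lemma \ref{lempropImProjLib}. Concretely, given $P=(p_{i,j})\in\gA^{n\times n}$ with $\Tr\,P=1$ and $\Vi^2 P=0$ (the criterion from Fact \ref{factprc1}), I want to produce a column $x\in\gA^{n\times 1}$ and a row $y\in\gA^{1\times n}$ with $P=xy$ and $yx=1$.

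First I would establish the following ``Gauss-style'' lemma in a gcd domain $\gA$: if $x_1,\ldots,x_n\in\gA$ satisfy $\pgcd(x_1,\ldots,x_n)=1$ and $a_1,\ldots,a_n\in\gA$ satisfy $a_ix_k=a_kx_i$ for all $i,k$, then there exists $y\in\gA$ with $a_i=yx_i$ for every~$i$. The proof is quick: setting $d=\pgcd(x_1,a_1)$, write $x_1=du$, $a_1=dv$ with $\pgcd(u,v)=1$; the relation $ua_i=vx_i$ forces $u\mid x_i$ for every $i$ (Gauss's lemma in a gcd domain), hence $u$ divides $\pgcd(x_1,\ldots,x_n)=1$, so $u$ is a unit and $y:=v/u=a_1/x_1\in\gA$ works.

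Next, back to $P$: since $\gA$ is integral and $\Tr\,P=1$, we have $1\neq 0$, so some column of $P$ is nonzero; to fix notation I may assume it is the first column (permuting basis vectors is a change of variables compatible with the claim). Let $d=\pgcd(p_{1,1},\ldots,p_{n,1})$ (nonzero) and write $p_{i,1}=dx_i$, so that $\pgcd(x_1,\ldots,x_n)=1$. For a fixed column $j$, the vanishing of the $2\times2$ minors gives $p_{i,j}p_{k,1}=p_{k,j}p_{i,1}$, i.e.\ $d\,p_{i,j}x_k=d\,p_{k,j}x_i$, and since $\gA$ is a domain and $d\neq 0$ we may cancel $d$ to obtain $p_{i,j}x_k=p_{k,j}x_i$ for all $i,k$. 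The preliminary lemma then furnishes $y_j\in\gA$ with $p_{i,j}=x_iy_j$ for all $i$. Doing this for every $j$ yields $P=xy$ with $x=(x_i)$, $y=(y_j)$. Finally $yx=\sum_ix_iy_i=\sum_ip_{i,i}=\Tr\,P=1$, and Lemma~\ref{lempropImProjLib} concludes.

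The main obstacle is really the preliminary Gauss-type lemma, which is where the gcd hypothesis is used in an essential way; once it is in hand, everything else is just reading off the $2\times2$ minor relations and remembering that $\Tr\,P=1$ is exactly the normalisation $yx=1$. A small care point is the non-triviality of the column one bases the argument on, which is guaranteed by $\Tr\,P=1$ in a nontrivial ring (and the trivial ring case is vacuous).
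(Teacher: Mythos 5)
Your proof is correct and essentially the paper's: both extract the gcd of one line of $P$, use the vanishing of the $2\times 2$ minors together with Euclid's lemma in the gcd domain to obtain the rank-one factorisation $P=xy$, and read off $yx=\Tr\,P=1$ before invoking Lemma~\ref{lempropImProjLib}. The one point to tighten is the anchoring of your preliminary lemma: the division there must be based at an index $i_0$ with $x_{i_0}\neq 0$ (such an index exists because $\gcd(x_1,\ldots,x_n)=1$), or equivalently you should use $\Tr\,P=1$ to normalise $p_{1,1}\neq 0$ as the paper does, since ``the first column is nonzero'' does not by itself guarantee $x_1\neq 0$.
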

\rem Consequently  $\Pic\,\gA\to \Pic\,\AuX$ is an \isoz.
This works if  $\gA$  is a discrete field.

\begin{proof}
We use the \carn given in Lemma \ref{lempropImProjLib}. 
Let $P=(m_{i,j})$ be an \idm matrix of rank 1. 
Since $\sum_i m_{i,i}=1$ we may assume that $m_{1,1}$ is 
regular.
Let $f$ be the gcd  of the first row. We have $m_{1,j}=fg_j$
with the gcd of $g_j$'s equal to 1. Since $f$ is regular and 
$m_{1,1}m_{i,j}=m_{1,j}m_{i,1}$ we have $g_1m_{i,j}=m_{i,1}g_j$.
So $g_1$ divides all the $m_{i,1}g_j$ and also their gcd 
$m_{i,1}$.
Let us write $m_{i,1}=g_1f_i$. Since $g_1f_1=m_{1,1}=fg_1$ we get 
$f_1=f$. Finally the \egt $m_{1,1}m_{i,j}=m_{1,j}m_{i,1}$ gives 
$f_1g_1m_{i,j}=f_1g_jg_1f_i$ and $m_{i,j}=f_ig_j.$
\end{proof}

\subsubsection*{Case of an integral normal ring} 
\addcontentsline{toc}{subsection}{Case of an integral normal ring}
\begin{lemma} 
\label{lemIntegclos} 
If $\gA$ is integral and integrally closed, then 
$\Pic\,\gA=\Pic\,\AuX$.
\end{lemma}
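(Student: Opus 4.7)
The plan is to apply Lemma \ref{lemPicPic1}: we must show that if $P(\uX) = (m_{i,j}(\uX)) \in \AuX^{n\times n}$ is idempotent of rank $1$ with $P(0) = \I_{n,1}$, then there exist $f_1, \ldots, f_n, g_1, \ldots, g_n \in \AuX$ with $m_{i,j} = f_i g_j$ for all $i,j$.

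First, I would work over the field of fractions $\gK = \Frac(\gA)$. The polynomial ring $\gK[\uX]$ is a UFD, hence a gcd domain, so by Lemma \ref{lemPicGcd} we have $\Pic\,\gK[\uX] = \so{1}$. Viewing $P(\uX)$ as an idempotent matrix of rank $1$ over $\gK[\uX]$ and applying Lemma \ref{lempropImProjLib}, we obtain $f_i, g_j \in \gK[\uX]$ such that $m_{i,j} = f_i g_j$ (and $\sum_i f_i g_i = 1$). By the uniqueness clause of Proposition \ref{propImProjLib} (for $r=1$), these vectors are determined up to a common unit $u \in \gK[\uX]^\times = \gK^\times$.

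Next, I would normalize. Since $P(0) = \I_{n,1}$, we have $f_1(0) g_1(0) = m_{1,1}(0) = 1$, so $f_1(0) \in \gK^\times$. Rescaling $f_i \leftarrow f_i / f_1(0)$ and $g_j \leftarrow f_1(0)\, g_j$, we may assume $f_1(0) = g_1(0) = 1$ (and, automatically, $f_i(0) = g_i(0) = 0$ for $i \neq 1$).

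The key step is then to descend from $\gK[\uX]$ back to $\AuX$, which is exactly where integral closedness is used. For each $i$, the factorization $f_i g_1 = m_{i,1} \in \AuX$ lets us apply Kronecker's theorem \ref{thKro}: for every coefficient $a$ of $f_i$ and every coefficient $b$ of $g_1$, the product $ab$ is integral over the subring of $\gA$ generated by the coefficients of $m_{i,1}$, hence integral over $\gA$. Taking $b = 1$ (the constant coefficient of $g_1$), every coefficient of $f_i$ is integral over $\gA$; since $\gA$ is integrally closed in $\gK$, we conclude $f_i \in \AuX$. Symmetrically, using $f_1 g_j = m_{1,j}$, each $g_j$ lies in $\AuX$.

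The only mildly subtle point is the normalization step: one must check that the uniqueness of Proposition \ref{propImProjLib} is applicable (giving units in $\gK[\uX]^\times$, which by Lemma \ref{lemUnitRedX} or directly since $\gK$ is a field reduce to $\gK^\times$), and that $f_1(0)$ is invertible in $\gK$ — but this is immediate from $P(0) = \I_{n,1}$. Everything else is a direct invocation of Kronecker plus the definition of integral closure.
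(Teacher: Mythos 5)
Your proof is correct and follows essentially the same route as the paper's: pass to $\gK[\uX]$ where the image of $P$ is free (via Lemma \ref{lemPicGcd} and Lemma \ref{lempropImProjLib}), normalise so that $f_1(0)=g_1(0)=1$, and then use Kronecker's theorem together with integral closedness to descend the $f_i$ and $g_j$ to $\AuX$. Your explicit remark that one takes $b=1$ (the constant coefficient of $g_1$) in Kronecker's theorem is exactly the point the paper leaves implicit.
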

\begin{proof}
We use the \carn given in Lemma \ref{lemPicPic1}. Let 
$P(\uX)=(m_{i,j}(\uX))_{i,j=1,\ldots ,n}$ be an \idm matrix of rank 
$1$  with
$P(0)=\I_{n,1}$. Let $\gK$ be the fraction field  of $\gA$.
On $\gK[\uX]$ the module  $\Im\,P(\uX)$ is free. Thus there exist  
$f=(f_1(\uX),\ldots ,f_n(\uX))$ and $g=(g_1(\uX),\ldots ,g_n(\uX))$ 
in
$\gK[\uX]^n$ such that  $m_{i,j}=f_ig_j$ for all $i,j$.
Moreover  since $f_1(0)g_1(0)=1$ and since we can modify $f$ and 
$g$ multiplying them by units, we can assume that 
$f_1(0)=g_1(0)=1$. Thus since $f_1g_j=m_{1,j}$ and using Kronecker's \thoz, the \coes of $g_j$'s are integral over the ring  generated  
by the \coes of $m_{1,j}$'s. In the same way the \coes of $f_i$'s are 
integral over the ring  generated by the \coes of $m_{i,1}$'s.
As $\gA$ is integrally closed the $f_i$'s and  
$g_j$'s are in $\gA[\uX]$. 
\end{proof}

\subsubsection*{Case of an integral seminormal ring} 
\addcontentsline{toc}{subsection}{Case of an integral seminormal ring}

Traverso \cite{Tra} has proved the \tho for \noe reduced ring
 (with some restrictions). For   proofs in the case of integral
 rings without \noe hypothesis see
 \cite{BC,Querre,GH}. 

\begin{theorem} 
\label{propIntSemin} 
If $\gA$ is integral and seminormal, then $\Pic\,\gA=\Pic\,\AuX$.
\end{theorem}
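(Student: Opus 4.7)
The plan is to apply the criterion of Lemma~\ref{lemPicPic1}: given an idempotent matrix $P(\uX) = (m_{i,j}) \in \AuX^{n \times n}$ of rank $1$ with $P(0) = \I_{n,1}$, I must produce $f_i, g_j \in \AuX$ such that $m_{i,j} = f_i g_j$. Extending scalars to the fraction field $\gK = \Frac\,\gA$, the proof of Lemma~\ref{lemIntegclos} (valid because fields are integrally closed) produces such $f_i, g_j \in \gK[\uX]$ together with the relation $\sum_i f_i g_i = 1$. Using the unit ambiguity in Proposition~\ref{propImProjLib}, I normalize so that $f_1(0) = g_1(0) = 1$; since $P(0) = \I_{n,1}$, this forces $f_i(0) = g_j(0) = 0$ for $i, j \geq 2$. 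Applying Kronecker's Theorem~\ref{thKro} to the identities $f_i g_j = m_{i,j} \in \AuX$ shows that every coefficient of $f_i, g_j$ is integral over $\gA$, hence lies in the integral closure $\gA'$ of $\gA$ inside $\gK$. Thus $f_i, g_j \in \gA'[\uX]$.

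The remaining and decisive step is the descent from $\gA'$ to $\gA$, and this is where seminormality enters. I will show, for every coefficient $a \in \gA'$ appearing in some $f_i$ or $g_j$, that $a^2 \in \gA$ and $a^3 \in \gA$. Granted this, setting $c := a^2$ and $b := a^3$ gives $b, c \in \gA$ with $b^2 = c^3$, and seminormality of $\gA$ furnishes $a' \in \gA$ with $(a')^2 = c$ and $(a')^3 = b$. Because $a$ and $a'$ satisfy the same pair of equations in the reduced ring $\gA'$, Fact~\ref{factRed1} forces $a = a'$, whence $a \in \gA$. Performing this argument on every coefficient then gives $f_i, g_j \in \AuX$, which completes the proof.

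The real obstacle is the production of $a^2, a^3 \in \gA$ for each coefficient $a$. I expect to proceed by induction on the $\uX$-degree of the coefficient under treatment, using the factorizations $m_{i,j} = f_i g_j$, the trace relation $\sum_i m_{i,i} = 1$, and the rank-one identities $m_{i,j} m_{k,\ell} = m_{i,\ell} m_{k,j}$ to express $a^2$ and $a^3$ as $\gA$-polynomial combinations of coefficients of the entries $m_{i,j}$ (which lie in $\gA$) and of previously-treated (hence already known to lie in $\gA$) coefficients of the $f_i, g_j$. The Schanuel matrix, in which $f_1 = 1 + aX$ and $g_2 = cX^2$ make $a^2 = c$ and $a^3 = ac = b$ appear directly as the degree-$2$ and degree-$3$ coefficients of $m_{1,2}$, is the archetypal situation. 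Managing this bookkeeping cleanly in full generality is the delicate combinatorial step; the surrounding arguments then reduce to routine applications of the preliminaries.
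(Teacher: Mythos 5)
Your first half coincides with the paper's proof: pass to $\gK=\Frac\,\gA$, obtain the factorisation $m_{i,j}=f_ig_j$ over $\gK[\uX]$ normalised by $f_1(0)=g_1(0)=1$, and use Kronecker's Theorem~\ref{thKro} to conclude that the \coes of the $f_i$ and $g_j$ are integral over $\gA$. The gap is in the descent. You assert that every \coe $a$ of every $f_i,g_j$ satisfies $a^2\in\gA$ \emph{and} $a^3\in\gA$, so that a single application of seminormality per \coe finishes the proof; but you give no argument for this, only the hope that $a^2$ and $a^3$ can be written as $\gA$-polynomial combinations of \coes of the $m_{i,j}$ and of ``previously treated'' \coesz, by an induction whose well-foundedness is not established. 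That is precisely where the whole difficulty of the theorem is concentrated, and it is not routine bookkeeping: beyond the Schanuel example no such identities are exhibited. Moreover the claim in the strong form you state (squares and cubes landing in $\gA$ itself, i.e.\ one pass of seminormalisation per \coe suffices) is stronger than what the paper's own refinement, Theorem~\ref{propIntSeminBis}, establishes --- there $c_i^2$ and $c_i^3$ are only placed in the iteratively built ring $\gA[(c_j)_{j<i}]$, reflecting the fact that the seminormal closure of $\gA$ in an overring is constructed by iteration, not in a single step.

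The paper closes this gap by a global, not coefficientwise, mechanism: it forms the finite extension $\gB\subseteq\gK$ generated by $\gA$ and the \coes of the $f_i,g_j$, introduces the conductor $\fa$ of $\gA$ in $\gB$, shows that seminormality forces $\fa$ to be a radical \id of $\gB$ (Lemma~\ref{lemIntSemin1}) and that $\fa=\so{x\in\gA \mid xc_1,\ldots,xc_q\in\gA}$ (Lemma~\ref{lemIntSemin2}), and then proves that $\gA\sur{\fa}$ is trivial by localising at a generic \idemi --- constructively, via the elimination Lemma~\ref{lemThierry}. Some argument of this kind is needed; until you actually produce the claimed polynomial expressions for $a^2$ and $a^3$ and the ordering of \coes that makes your induction go through, the proof is incomplete at its decisive step.
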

\begin{proof}
We start the proof as in Lemma \ref{lemIntegclos}.
There exist $f_1(\uX),\ldots ,f_n(\uX),g_1(\uX),\ldots ,g_n(\uX)$ in
$\gK[\uX]^n$ such that  $m_{i,j}=f_ig_j$ for all $i,j$. Moreover  
$f_1(0)=g_1(0)=1$. Let us call $\gB$ the subring of $\gK$ generated  
by $\gA$ and by the \coes of $f_i$'s and  $g_j$'s. Kronecker's \tho says that
 $\gB$ is a finite extension of 
$\gA$ (i.e., $\gB$ is a \tf \Amoz). 
Our aim is now to show that $\gA=\gB$.
Let us call  $\fa$ the \emph{conductor  of $\gA$ in $\gB$}, \cad
 $\so{x\in\gB\,|\,x\gB\subseteq\gA}$. It is an 
\id of $\gA$ and of $\gB$. Our aim is now to show that 
$\fa=\gen{1}$, \cad that $\gC=\gA\sur{\fa}$ is trivial. We need some lemmas.

\begin{lemma} 
\label{lemIntSemin1} 
If $\gA\subseteq\gB$, $\gA$ seminormal and $\gB$ reduced, then the  
conductor $\fa$ of $\gA$ in $\gB$ is a  radical \id of $\gB$.
\end{lemma}
\begin{Proof}{Proof of Lemma \ref{lemIntSemin1}.}
We have to show that if $u\in\gB$ and $u^2\in\fa$ then $u\in\fa$. 
Let  $c\in\gB$, we have to show that  $uc\in\gA$. We have 
$u^2c^2\in\gA$, and
 $u^3c^3=u^2(uc^3)\in\gA$ since $u^2\in\fa$. 
Since $(u^3c^3)^2=(u^2c^2)^3$ there exists $a\in\gA$ such that  
 $a^2=(uc)^2$ and $a^3=(uc)^3$. Since $\gB$ is reduced this implies 
$a=uc$, and thus $uc\in\gA$. 
\end{Proof}

\rem The \emph{seminormal closure} of a ring 
$\gA$ in a reduced overring  $\gB$ is obtained by starting
with $\gA$ and adding \elts $x$ of $\gB$  such that  $x^2$ and $x^3$ are in the 
previoulsly constructed ring. Fact \ref{factRed1} implies
that  $x$ is uniquely determined by  $x^2$ and $x^3$.  
So the previous proof can be seen as a proof of the following lemma.

\begin{lemma} 
\label{lemIntSemin1bis} 
Let $\gA\subseteq\gB$ be reduced rings, $\gA_{1}$ 
the seminorml closure of  $\gA$ in $\gB$, and  
$\fa$ the conductor  of $\gA_{1}$ in $\gB$. 
Then $\fa$ is a radical \id of
$\gB$.
\end{lemma}

\begin{lemma} 
\label{lemIntSemin2} 
Let $\gA\subseteq\gB=\gA[\cq]$ be reduced rings 
with $\gB$  finite over $\gA$. Let   
$\fa$ be the conductor  of $\gA$ in $\gB$. Assume that $\fa$ is a 
 radical \idz. Then  $\fa$ is 
equal to $\so{x\in\gA\,|\,xc_1,\ldots ,xc_q\in\gA}$.
\end{lemma}
\begin{Proof}{Proof of Lemma \ref{lemIntSemin2}.}
Indeed if $xc_i\in\gA$ then  $x^\ell c_i^\ell\in\gA$ for all 
$\ell$, and thus for $N$ big enough
$x^N y\in\gA$ for all $y\in\gB$, thus $x$ is in the radical of 
$\fa$ (if $d$ bounds the degrees of integral dependence equations
of the $c_i$'s over $\gA$, one can take $N=(d-1)q$).
\end{Proof}
\emph{End of the proof of \Tho \ref{propIntSemin}},
stated within \clamaz.\\
Let us assume by contradiction that $\fa\neq \gen{1}$. 
One has $\gC=\gA\sur{\fa}\subseteq \gB\sur{\fa}=\gC'$.
Let  $\fp$ be~a \idemi of 
$\gC$, $\fP$ the corresponding ideal of $\gA$,
$S=\gC\setminus\fp$  the complementary part. 
Since~$\fp$  
is~a \idemi  and since~$\gC$ is reduced 
$S^{-1}\gC=\gL$ is a field contained in the reduced ring  $S^{-1}\gC'=\gL'$.\\
If  $x$ is an object defined over $\gA$ let us call
 $\ov{x}$ what it becomes after the change of ring  $\gA\to\gL'$.
The module  $\ov{M}$ is defined by the matrix $\ov{P}$ whose \coes 
are in $\gL[\uX]$. Since $\gL$ is a field, $\Im\,\ov{P}$ is 
free over~$\gL[\uX]$. 
This implies, by unicity (Lemma~\ref{lempropImProjLib}) and  since $f_1(0)=g_1(0)=1$, that the \pols $\ov{f_i}$ and $\ov{g_j}$ are in $\gL[\uX]$ (if $u(X)\in\gL[\uX]$ is invertible and $u(0)=1$, then $u=1$).
This means that there exists $s\in \gA\setminus \fP$  such that the \pols 
$sf_i$ and $sg_j$ have their \coes in $\gA$.  
Thus Lemma \ref{lemIntSemin2} implies that $s\in\fa$, a contradiction.
\end{proof}

The proof we have given for \Tho  \ref{propIntSemin} 
is a simplification of existing ones in the literature. 
Nevertheless it is not fully \cov and this gives only the integral case.

\subsubsection*{Constructive proof (case seminormal and integral)} 
\addcontentsline{toc}{subsection}{Constructive proof}

Remark first that the proof by contradiction shows that 
 the ring  $\gA\sur{\fa}$  is trivial in the following way: if the ring  were
 not trivial \&ct\ldots, it should be trivial. 
In fact the argument proves directly that the ring is trivial
after a slight modification. For this kind of things
see Richman's paper~\cite{Ric} about the nontrivial use of the trivial ring.

A most difficult task is to eliminate the use of
the \idemiz, which is a \emph{purely ideal object} appearing in
the classical proof. 
A lemma is needed for doing this job.   

The intuitive meaning of the lemma is the following: \\
\emph{Let $\gC$ be a reduced ring and $P$  a \pro module 
of rank 1 over $\gC[\uX]$; if $\gC$ is not trivial, some nontrivial localication $S^{-1}\gC$ of $\gC$ have to exist where  $P$ 
becomes free.}

In \clama the answer is easy: use the localisation in a \idemiz. 
This argument appeared in the proof for the ring  $\gC=\gA/\fa$.

The lemma in this intuitive form 
\gui{is not true} from a \cofz point of view (we lack primes).
But fortunately it is  the contraposed form which is needed:\\
\emph{Let $\gC$ be a reduced ring and $P$  a \pro  module
of rank 1 over $\gC[\uX]$; if each localisation  $S^{-1}\gC$ of $\gC$ 
for which  $P$ becomes free is trivial, then $\gC$ 
is itself trivial.}\\
And this form \gui{is true} from a \cofz point of view, \cad 
we get an \algoz!  

In fact we need the following version
where localisations consist only in inverting one \eltz.
Here is THE crucial lemma.  

\begin{lemma} 
\label{lemThierry} \emph{(elimination of a \idemiz)}\\
Let $\gC$  be a reduced ring and $P=(m_{i,j})\in\gC[\uX]^{n \times 
n}$ an \idm matrix of rank 1 
such that  $P(0)=\I_{n,1}$. Let us assume the following implication: 

\smallskip\noindent  
\centerline{$\forall a\in\gC$, if $\Im\,P$ 
is free over $\gC[1/a][\uX], $ then $a=0$.}

\smallskip \noindent  Then $\gC$ is trivial.
\end{lemma}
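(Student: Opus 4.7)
The plan is to prove $\gC$ trivial by exhibiting a finite comaximal family $(a_1,\ldots,a_r)$ in $\gC$ such that, for every $k$, $\Im\,P$ is already free over $\gC[1/a_k][\uX]$. The hypothesis of the lemma then forces $a_k=0$ in $\gC$ for each $k$, and since $\langle a_1,\ldots,a_r\rangle=\gen{1}$ this gives $1=0$. So the real content is not the final implication but the construction of such a comaximal family.

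To build it, I would run the proof of Lemma~\ref{lemPicGcd} dynamically over $\gC$, even though $\gC$ is merely reduced rather than a gcd domain. Concretely, one tries to produce $f_i,g_j\in\gC[\uX]$ with $m_{i,j}=f_ig_j$ and $f_1(0)=g_1(0)=1$ by the same gcd/divisibility manipulations used in Lemma~\ref{lemPicGcd} (first take the ``gcd'' $f$ of the first row, write $m_{1,j}=fg_j$, use the rank-one relation $m_{1,1}m_{i,j}=m_{1,j}m_{i,1}$ to deduce $g_1\mid m_{i,1}$, extract the $f_i$, and verify $m_{i,j}=f_ig_j$). Each place where this procedure would need to invert a leading coefficient of some polynomial (or, equivalently, decide a divisibility) becomes a branching point: in one branch we invert an element $b\in\gC$ and continue inside $\gC[1/b]$, in the other we impose $b=0$ in $\gC$ (allowed because $\gC$ is reduced: \emph{if the ``inverted'' branch ever succeeds in making $\Im\,P$ free, then by hypothesis $b=0$, so the ``killed'' branch is the only consistent one, and one may as well work under $b=0$}). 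The computation branches according to this finite, finitely branching tree. Its leaves are of two kinds: either the gcd-type procedure terminates and produces the factorization $m_{i,j}=f_ig_j$ over $\gC[1/a][\uX]$ for the product $a$ of all elements inverted along that path; or $\deg P$ has been lowered by the accumulated ``$b=0$'' identifications down to the base case $P=\I_{n,1}$, where $\Im\,P$ is free over $\gC[\uX]$ itself (take $a=1$). The leaves of the tree supply precisely the comaximal family $(a_1,\ldots,a_r)$ we want: at each branching node the two sub-families attached to the two children are combined through the standard comaximality relation $\gen{b}+\gen{1-bc}=\gen{1}$ (one localizes $b$, the other sets $b=0$ and proceeds in the quotient).

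The main obstacle is the bookkeeping of the tree: one must (i) organise the branching so that each step strictly decreases a well-chosen complexity measure attached to $P$ (typically the total degree in $\uX$, together with the degrees of the relevant entries), so that the tree is finite; (ii) check at each node that the two child sub-families merge into a sub-family comaximal \emph{in $\gC$}, not just in some localization; and (iii) ensure that when one reaches a leaf of the first kind the output of the procedure really is the factorization $(f_i,g_j)$ promised by Lemma~\ref{lempropImProjLib} (using the uniqueness statement there together with the normalisation $f_1(0)=g_1(0)=1$ and Lemma~\ref{lemUnitRedX} to get uniqueness over the reduced ring $\gC[1/a_k]$). Once the tree is set up coherently, the proof is completed by applying the hypothesis to each leaf $a_k$ and reading off $1\in\gen{a_1,\ldots,a_r}=\gen{0}$.
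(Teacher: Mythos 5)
Your computational core --- run the discrete-field/gcd algorithm of Lemma~\ref{lemPicGcd} dynamically over $\gC$, branching on \gui{$b$ invertible or $b=0$} --- is exactly the paper's, but the logical assembly is not, and as stated it has a gap. The two children of a branching node are the localization $\gC'[1/b]$ and the \emph{quotient} $\gC'/\gen{b}$ of the current ring $\gC'$; the quotient is not a localization of $\gC$ at any element, and in particular it is not $\gC[1/(1-bc)]$, so the relation $\gen{b}+\gen{1-bc}=\gen{1}$ does not merge the two children into a comaximal family of element-localizations of $\gC$. Consequently the leaves of the tree do not hand you a comaximal family $(a_1,\ldots,a_r)$ with $\Im\,P$ free over each $\gC[1/a_k][\uX]$, and the hypothesis of the lemma (which speaks only of localizations $\gC[1/a]$) cannot even be applied to a leaf lying below a \gui{$b=0$} branch until that branch has been shown to be vacuous.

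The correct assembly, which your parenthetical remark gestures at but does not carry out, is sequential: process the leaves from left to right. The leftmost leaf is a pure localization $\gC[1/(a_1\cdots a_n)]$; the hypothesis gives $a_1\cdots a_n=0$, hence $a_n=0$ genuinely holds in $\gC[1/(a_1\cdots a_{n-1})]$ (reducedness), so the sibling branch \gui{$a_n=0$} is not a quotient at all but the same ring; its own leftmost leaf is then again a pure localization of $\gC$, and one climbs the tree applying the hypothesis once per leaf, each application validating the passage to the next. At the root one finds $\Im\,P$ free over $\gC[1/1][\uX]$ and concludes $1=0$. Under this reading your comaximal family degenerates: the rightmost leaf contributes the element $1$, so comaximality is trivial and carries no information --- all the work lies in the ordered, interleaved use of the hypothesis, not in a single application to a pre-built covering. (The Annex gives a second proof packaging the same induction into idempotents of the zero-dimensional reduced closure of $\gC$; it too proceeds by successively proving products $a_{i_1}\cdots a_{i_k}=0$, never by one application to a comaximal family.)
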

\begin{Proof}{Proof that Lemma \ref{lemThierry} implies  
\Tho \ref{propIntSemin}. }
We can rewrite the end of the proof of \Tho \ref{propIntSemin}, 
merely replacing the  localisation at the ``purely idealistic'' \idemi ideal $\fp$ by the localisation in one 
\elt $a$. \\
We have two reduced rings $\gC=\gA\sur{\fa}\subseteq \gB\sur{\fa}=\gC'$.
We want to show that $\gC$ is 
trivial. It is sufficient to show that  $\gC$ satisfies, with the matrix 
$P$ mod $\fa$, the hypotheses of THE lemma.\\
So let $a$ be an \elt of $\gA$ such that   $\Im\,P$ 
is free over $\gC[1/a][\uX]$. 
Let $\gC[1/a]=\gL\subseteq  \gC'[1/a]\allowbreak=\gL'$, which is a reduced ring. 
If  $x$ is an object defined over $\gA$  let us call
 $\ov{x}$ what it becomes after the change of ring  $\gA\to\gL'$.
The module  $\ov{M}$ is free over $\gL[\uX]$. 
This implies, by unicity (Lemma~\ref{lempropImProjLib}) and since $f_1(0)=g_1(0)=1$, that the \pols $\ov{f_i}$ and $\ov{g_j}$ are in $\gL[\uX]$.\\
This means that there exists $N\in \NN$  such that  the $a^Nf_i$ and 
$a^Ng_j$ have their \coes in $\gA$. 
Thus   Lemmas~\ref{lemIntSemin1} and~\ref{lemIntSemin2}  imply $a\in\fa$, 
\cad $a=0$ in $\gC$.
\end{Proof}

\begin{Proof}{Proof of Lemma \ref{lemThierry}.}
A classical proof: 
let us assume that $\gC$ is non trivial and let $\fp$ be a \idemiz;
 since $\gC$ is reduced, $\gC_\fp$ is a field; thus  $\Im\,P$ 
becomes free over $\gC_\fp[\uX]$; this implies there exists an 
$a\notin\fp$ such that    $\Im\,P$ 
becomes free over $\gC[1/a][\uX]$; 
thus $a=0$, a contradiction.\\
We have a lemma eliminating a \idemiz. But the proof of
the elimination lemma is a proof by contradiction using a \idemiz! 
\emph{This looks like a bad joke.}\\
No, because this abstract proof can be reread dynamically
and becomes \covz. Here is what happens.\\
Imagine  the ring  $\gC$ is a discrete field. 
Then the $f_i$'s and $g_j$'s 
are calculated with an \algo corresponding to the case of a discrete field.\\ 
This \algo uses disjunction \gui{$a$ is zero or 
invertible},
for \elts $a$ computed by the \algo from the
\coes of $m_{i,j}$'s.  But $\gC$ is only a reduced ring,
without equality or inversibility test. So the \algo 
for discrete fields has to be replaced  
by a tree where we open two branches each time a question 
\gui{Is $a$ zero or invertible?} is asked by 
the \algoz.\\
We get a tree, huge, but finite. 
Assume that the branch \gui{$a$ invertible}  
is put on the left and let us see what happens 
at the leaf of the leftmost branch. 
Some elements $a_1$, \ldots , $a_n$ have been inverted and the module 
$P$  became free over
$\gC[1/(a_1\cdots a_n)][\uX]$. 

\noindent \emph{Conclusion: in the ring  $\gC,$ one has $a_1\cdots 
a_n=0$.}

\noindent Let us go up one step. \\
In  the ring  $\gC[1/(a_1\cdots a_{n-1})]$, we have $a_n=0$. 
So there was no need to open left branch. 
What happens in the branch  $a_n=0$?
We see what is the computation in the leftmost branch after this node.
We have inverted  $a_1$, \ldots , $a_{n-1}$, 
and after we invert $b_1,\ldots 
,b_k$ (if $k=0$ let $b_k=a_{n-1}$).\\
The module $P$  became free on
$\gC[1/(a_1\cdots a_{n-1} b_1\cdots  b_k)][\uX]$.

\noindent \emph{Conclusion: in the ring  $\gC,$ one has $a_1\cdots a_{n-
1} b_1\cdots  b_k=0$.}  \\
Let us go up one step. Since $b_k=0$ there was no need to open 
the left branch. 
What happens in the branch  $b_k=0$? \ldots 

\noindent \emph{And so on.} At the end of the tale
we are at the root of the tree and the module  $P$ is free on the ring
$\gC[\uX]=\gC[1/1][\uX]$. So $1=_\gC0$ by Lemma~\ref{lemThierry}.
\end{Proof}

If we use Lemma \ref{lemIntSemin1bis} instead of Lemma \ref{lemIntSemin1} we get the following more precise result.

\begin{theorem} 
\label{propIntSeminBis} 
If $\gA$ is integral and seminormal and $M$ a
\pro module of rank $1$ over $\AuX$, there exist $c_{1},\ldots,c_{m}$ in the fraction field of $\gA$ such that:
\begin{enumerate}
\item $c_{i}^2$ and $c_{i}^3$ are in $\gA[(c_{j})_{j<i}]$ for $i=1,\ldots,m$,
\item $M$ is free over $\gA[(c_{j})_{j\leq m}][X]$.
\end{enumerate}
\end{theorem}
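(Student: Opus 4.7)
The plan is to repeat the proof of Theorem \ref{propIntSemin}, with $\gA$ replaced by its seminormal closure $\gA_1$ inside a suitable finite extension $\gB$ of $\gA$; the substitution of Lemma \ref{lemIntSemin1bis} for Lemma \ref{lemIntSemin1} is what makes this work without a seminormality hypothesis on $\gA$. Represent $M$ by an idempotent matrix $P(\underline{X}) \in \gA[\underline{X}]^{n\times n}$ of rank $1$ with $P(0)=\I_{n,1}$ (the setting of Lemma \ref{lemPicPic1}). Since $\gK = \Frac(\gA)$ makes $\gK[\underline{X}]$ a gcd domain, Lemmas \ref{lemPicGcd} and \ref{lempropImProjLib} yield $f_i, g_j \in \gK[\underline{X}]$ with $m_{i,j} = f_i g_j$, and rescaling by a nonzero element of $\gK$ arranges $f_1(0) = g_1(0) = 1$. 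Applying Kronecker's theorem (Theorem \ref{thKro}) to $f_1 g_j = m_{1,j}$ and to $f_i g_1 = m_{i,1}$, and using these normalizations, shows that every coefficient of each $f_i$ and $g_j$ is integral over $\gA$. Let $\gB \subseteq \gK$ denote the subring generated by $\gA$ together with those coefficients; then $\gB$ is a finite extension of $\gA$, and by Lemma \ref{lempropImProjLib} the module $M$ becomes free over $\gB[\underline{X}]$.

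The key step is to prove $\gA_1 = \gB$, where $\gA_1$ is the seminormal closure of $\gA$ in $\gB$. Let $\fa$ be the conductor of $\gA_1$ in $\gB$; Lemma \ref{lemIntSemin1bis} tells us it is a radical \id of $\gB$. Assume for contradiction $\fa \neq \gen{1}$, and pick a minimal prime $\fp$ of the reduced ring $\gC = \gA_1/\fa$, with preimage $\fP \subseteq \gA_1$. Setting $S = \gC \setminus \fp$, we obtain a field $\gL = S^{-1}\gC$ sitting inside the reduced ring $\gL' = S^{-1}(\gB/\fa)$. Over $\gL[\underline{X}]$ the image of $\overline{P}$ is free, so Lemma \ref{lempropImProjLib} produces $\overline{x}, \overline{y} \in \gL[\underline{X}]$ with $\overline{P} = \overline{x}\,\overline{y}$; since $\overline{P}(0) = \I_{n,1}$, we may rescale so that $\overline{x}_1(0) = \overline{y}_1(0) = 1$. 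In $\gL'[\underline{X}]$ we also have $\overline{P} = \overline{f}\,\overline{g}$ with $\overline{f_1}(0) = \overline{g_1}(0) = 1$; the uniqueness clause of Lemma \ref{lempropImProjLib} forces $\overline{f} = \overline{x}\cdot u$ and $\overline{g} = u^{-1}\overline{y}$ for some unit $u \in \gL'[\underline{X}]^{\times}$. By Lemma \ref{lemUnitRedX} this $u$ is a constant in $(\gL')^{\times}$, and evaluating at $0$ gives $u = 1$. Hence every $\overline{f_i}, \overline{g_j}$ lies in $\gL[\underline{X}]$, so there exists $s \in \gA_1 \setminus \fP$ such that all the $sf_i$ and $sg_j$ have coefficients in $\gA_1$. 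Since $\gB$ is generated over $\gA_1$ by the coefficients of the $f_i$'s and $g_j$'s, Lemma \ref{lemIntSemin2} yields $s \in \fa \subseteq \fP$, contradicting $s \notin \fP$. Therefore $\gA_1 = \gB$.

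To conclude, note that $\gB$ is finitely generated as an $\gA$-algebra, so each of finitely many generators of $\gB$ is reached at some finite stage of the iterative seminormal-closure construction; concatenating the corresponding chains produces a finite sequence $c_1, \ldots, c_m \in \gK$ with $c_i^2, c_i^3 \in \gA[c_1, \ldots, c_{i-1}]$ and $\gA[c_1, \ldots, c_m] = \gB$. The module $M$ is then free over $\gA[c_1, \ldots, c_m][\underline{X}]$, as required. The main delicate point is the uniqueness argument over the reduced (non-field) ring $\gL'$: Lemma \ref{lempropImProjLib}, Lemma \ref{lemUnitRedX}, and the normalization $f_1(0) = g_1(0) = 1$ must be combined just so to pin the ambiguous unit $u$ down to $1$, thereby bringing each $\overline{f_i}, \overline{g_j}$ back into the smaller ring $\gL[\underline{X}]$.
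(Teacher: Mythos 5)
Your proof is correct and follows the paper's intended route exactly: rerun the proof of Theorem \ref{propIntSemin} with $\gA$ replaced by its seminormal closure $\gA_{1}$ inside the finite extension $\gB$, invoke Lemma \ref{lemIntSemin1bis} in place of Lemma \ref{lemIntSemin1} to see that the conductor is a radical ideal of $\gB$, conclude $\gA_{1}=\gB$, and extract the chain $c_{1},\ldots,c_{m}$ from the finite generation of $\gB$ over $\gA$. The only remarks worth making are that (as the paper implicitly does, via Lemma \ref{lemPicPic1}) one must read the hypothesis as saying $M(0)$ is free, i.e., $P(0)=\I_{n,1}$, and that your endgame uses the classical minimal-prime contradiction, whereas the ``strongly explicit'' character the paper claims for this statement requires substituting the constructive elimination Lemma \ref{lemThierry} for that step --- a substitution the paper has already shown to be purely mechanical.
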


This gives a strongly explicit form of the Traverso-Swan theorem for integral rings.

\section*{Annex: \zeds reduced rings} 
\addcontentsline{toc}{section}{Annex: \zeds reduced rings}
\label{secAnnex}
\markboth{Seminormalrings}{Annex: \zeds reduced rings}

\setcounter{section}{1}
\setcounter{theorem}{0}

\def\thesection{\Alph{section}}

In  this part, we give some important fact in the theory of
\zeds reduced rings. These rings are good substitute of fields.

As a consequence we get the general form of the Traverso-Swan \thoz.

Moreover  we get a new proof (without computation tree)
of Lemma \ref{lemThierry}  (in fact it is essentially the same proof,
the tree is only hidden behind \idmsz).

\medskip 
\rem The idea of replacing the fraction field of $\gA$  by a 
\zed reduced ring  containing $\gA$ is not in \cite{Swan}: Swan 
uses arguments much more sophisticated in order to reduce the general
case to the \noe case.
The proof of the general case in \cite{coq} is thus a striking improvement
of Swan's proof. Moreover  the
\tho is new since it gives an \algo instead of a purely abstract statement.

\subsection*{A. Basic facts}
\addcontentsline{toc}{subsection}{A. Basic facts}

A ring is \emph{\zedz} when we have
\begin{equation} \label{eqZed}
\forall x\in \gA~\exists a\in\gA~\exists d\in
\NN\quad \quad x^{d}=ax^{d+1}.
\end{equation}

If the ring is reduced $d=1$  is sufficient because $x^d(1-xa)=0$ implies $x(1-
xa)=0.$   

In  a  commutative ring $\gC$, two \elts $a$ and $b$ are  
\emph{quasi inverse}
if one has
              $$a^2b=a,\quad \quad  b^2a=b.$$ 
We say also that $b$ is \emph{the} quasi inverse of $a$. Indeed it is unique: if  $a^2b=a=a^2c$,
 $b^2a=b$  and  $c^2a=c$, then since $ab=a^2b^2$, $ac=a^2c^2$ and   
$a^2(c-b)=a-a=0$, we get 
$$c-b=a(c^2-b^2)=a(c-b)(c+b)=a^2(c-b)(c^2+b^2)=0.$$

On the other hand  if $x^2y=x$, one sees that $xy^2$ is quasi inverse of 
$x$. So:

\begin{fact} 
\label{factZDRQI} 
A ring is \zed reduced \ssi each \elt has a quasi inverse. 
\end{fact}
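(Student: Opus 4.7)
The plan is to prove the two directions separately, relying directly on the observations already made just before the statement.

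\textbf{Direction ($\Leftarrow$).} Assume every element of $\gA$ has a quasi-inverse. Given $x\in\gA$, let $b$ be its quasi-inverse, so $x^2b=x$ and $b^2x=b$. The first identity $x^2b=x$ is exactly condition~(\ref{eqZed}) with $d=1$ (taking $a=b$), so $\gA$ is zero-dimensional. To prove reducedness, I would iterate $x=x^2b$: multiplying the relation $x=x^kb^{k-1}$ by $xb$ on the right gives $x=x^{k+1}b^{k}$, so by induction $x=x^{n+1}b^{n}$ for every $n\geq 1$. Consequently, if $x^{n+1}=0$ for some $n$, then $x=0$. This shows $\gA$ is reduced.

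\textbf{Direction ($\Rightarrow$).} Assume $\gA$ is zero-dimensional and reduced. Fix $x\in\gA$. By~(\ref{eqZed}) there exist $a\in\gA$ and $d\in\NN$ with $x^d=ax^{d+1}$, i.e.\ $x^d(1-xa)=0$. As remarked right after~(\ref{eqZed}), reducedness permits $d=1$: indeed $(x(1-xa))^d=x^d(1-xa)^d=0$, so $x(1-xa)=0$ in a reduced ring, i.e.\ $x=x^2a$. Then by the remark preceding the fact (\emph{if $x^2y=x$ then $xy^2$ is quasi-inverse of $x$}), the element $b:=xa^2$ is a quasi-inverse of $x$.

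There is essentially no obstacle: both directions are short computations and the key technical gadgets (passing from $x^d=ax^{d+1}$ to $x=x^2a$ in a reduced ring, and exhibiting $xy^2$ as a quasi-inverse when $x^2y=x$) are already furnished in the surrounding text. The only point requiring a tiny calculation is the induction $x=x^{n+1}b^{n}$ used to deduce reducedness from the existence of quasi-inverses.
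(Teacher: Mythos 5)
Your proof is correct and follows essentially the same route as the paper, which establishes the fact through the surrounding remarks (reducedness lets one take $d=1$, and $xy^2$ is a quasi-inverse when $x^2y=x$); you merely make explicit the reducedness half of the converse, which the paper leaves implicit. (That step is even shorter than your induction: $x=x^2b$ already gives $x^2=0\Rightarrow x=0$.)
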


\smallskip Such rings  are also called  \emph{absolutely 
flat} or \emph{von Neuman regular} (this is mainly used in the non commutative case, with the 
equations $aba=a$ and $bab=b$).

So, \zed reduced rings can be defined as equational structures,
adding a unary law $a\mapsto a\bl$ satisfying (\ref{eqAxqiv})
\begin{equation} \label{eqAxqiv}
a^2\,a\bl=a, \quad\quad a\,(a\bl)^2=a\bl.
\end{equation}

This implies, with $e_a=aa\bl,$

$$
\left|
\begin{array}{lll} 
 e_a^2=e_a,& e_aa=a,& e_aa\bl=a\bl,\\ 
(a\bl)\bl=a,\quad\quad& (ab)\bl=a\bl\, b\bl,& 0\bl = 0, \\ 
1\bl=1,&  
x\,\,\mathrm{regular}\,\Leftrightarrow \,x\,x\bl=1,\quad
&x\,\,\mathrm{\idm}\,\Leftrightarrow \,x=x\bl.
\end{array}
\right.
$$

As an easy consequence:

\begin{fact} 
\label{factZRD} 
A ring is \zed reduced \ssi any \itf is generated by an
\idmz.
\end{fact}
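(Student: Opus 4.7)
The plan is to prove both implications using only the quasi-inverse calculus already summarized before the statement.

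\emph{First direction ($\Leftarrow$).} Suppose every \itf is generated by an \idmz. Given $x\in\gA$, write $\gen{x}=\gen{e}$ with $e^2=e$. Then $x=es$ and $e=tx$ for some $s,t\in\gA$, so $ex = e^2 s = es = x$, and substituting $e=tx$ gives $x=ex=tx^2$. This is the \zed axiom~(\ref{eqZed}) with $d=1$. For reducedness: if $x^2=0$ then $x=tx^2=0$, so $\gA$ is reduced.

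\emph{Second direction ($\Rightarrow$).} Assume $\gA$ is \zed reduced, so every $a\in\gA$ has a quasi inverse $a\bl$ with $e_a=aa\bl$ \idmz. I first handle the principal case: the \egt $\gen{a}=\gen{e_a}$ holds since $e_a=aa\bl\in\gen{a}$ on one hand, and $a=a^2a\bl=a\cdot e_a\in\gen{e_a}$ on the other (using $a^2a\bl=a$). For the \tf case I would argue by induction on the number of generators, using the standard identity: if $e,f$ are \idms in a commutative ring then $g:=e+f-ef$ is \idm and $\gen{e,f}=\gen{g}$ (check $g^2=g$ by expansion, and $eg=e$, $fg=f$, so each of $e,f$ lies in $\gen{g}$, while $g\in\gen{e,f}$ is obvious). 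Applying this repeatedly to replace pairs of \idm generators by a single one reduces an arbitrary \itf $\gen{x_1,\ldots,x_k}=\gen{e_{x_1},\ldots,e_{x_k}}$ to an \id of the form $\gen{e}$ with $e$ \idmz.

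No step looks delicate: the main content is already packed into the quasi-inverse identities~(\ref{eqAxqiv}) listed before the fact, which give the crucial \egts $e_a\,a=a$ and $e_a\,a\bl=a\bl$ used in the principal-case argument, and the \idm sum formula $e\vu f=e+f-ef$ is routine. The only point to be careful about is not invoking any case distinction (e.g., ``$x$ is zero or invertible''), which would not be \covz; the arguments above use only the single operation $a\mapsto a\bl$ and elementary ring identities.
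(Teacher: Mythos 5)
Your proof is correct, and it is precisely the routine verification that the paper leaves implicit (the paper states the fact only as ``an easy consequence'' of the quasi-inverse identities, with no written proof). Both directions are carried out as intended: $\gen{x}=\gen{e}$ yields $x=tx^2$ and reducedness for the converse, and the direct implication combines $\gen{a}=\gen{e_a}$ with the standard idempotent join $e+f-ef$, all constructively.
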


The notion of \zed reduced ring is \emph{the good equational generalisation} of
the notion of field. A field  is nothing but a \zed reduced ring 
which is \emph{connected} 
(\cad with $0$ and $1$ as unique \idmsz).

\begin{lemma} 
\label{lemzedred0} 
Let $\gA\subseteq\gC$ with $\gC$ \zed reduced and $a\in\gC$. We use the notation 
$e_a=aa\bl$.
\begin{enumerate}
\item $e_a$ is the unique  \idm of $\gC$ such that 
$\gen{a}=\gen{e_a}$. Moreover  $\Ann_\gC(a)=\Ann_\gC(e_a)=\gen{1-e_a}$
\item $\gC=e_a\gC\oplus(1-e_a)\gC$ with  
$e_a\gC\simeq\gC[1/e_a]\simeq\aqo{\gC}{1-e_a}$ 
and $(1-e_a)\gC\simeq\aqo{\gC}{e_a}$ \\
(NB: the \id $e_a\gC$ is not a subring, but it is  a ring 
with $e_a$ as $1$).
\item \label{item3lemzedred0} In  $e_a\gC$, $a$ is invertible and in $\aqo{\gC}{e_a}$, $a$ 
is null.
\item If $a\in\gA$, then $e_a\gA[a\bl]\simeq \gA[1/a]$.
\item \label{itemlemzedred0} More generally, with $a,b,c\in\gA$
one has  $(e_ae_be_c)\gA[a\bl,b\bl,c\bl]\simeq \gA[1/(abc)]$.
\item \label{item2lemzedred0} If moreover  $abc=0$, then  
$(e_ae_b)\gA[a\bl,b\bl,c\bl]\simeq \gA[1/(ab)]$.
\end{enumerate}
\end{lemma}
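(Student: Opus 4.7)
The plan is to verify the six items in order, using only the defining quasi-inverse identities $a^2 a\bl = a$ and $a(a\bl)^2 = a\bl$ together with the derived identities already displayed in the text (notably $e_a^2 = e_a$, $e_a a = a$, $e_a a\bl = a\bl$, and $(ab)\bl = a\bl b\bl$).

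Items 1--3 should reduce to routine manipulations. For item 1 the equalities $e_a = a a\bl$ and $a = e_a a$ immediately give $\gen{a} = \gen{e_a}$, while uniqueness of the idempotent generating this ideal is the usual argument: if $\gen{e} = \gen{e'}$ with both idempotent, writing $e = e'r$ yields $ee' = e'^2 r = e$ and symmetrically $ee' = e'$. The \egts $a = e_a a$ and $e_a = a a\bl$ give $\Ann(a) = \Ann(e_a)$, and $\Ann(e_a) = \gen{1-e_a}$ is the standard splitting. Item 2 is the standard idempotent decomposition: the isomorphism $e_a \gC \simeq \gC[1/e_a]$ holds because $e_a$ is already its own inverse inside $e_a \gC$, and $\gC[1/e_a] \simeq \gC/\gen{1-e_a}$ is the usual fact that inverting an idempotent kills its complement. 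Item 3 then follows because $a \cdot a\bl = e_a$ serves as a unit in $e_a \gC$, and $a = e_a a$ vanishes modulo $\gen{e_a}$.

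For item 4, the first substantial step, the approach is to construct the isomorphism via the universal property of $\gA[1/a]$. Inside $e_a \gA[a\bl]$ regarded as a ring with unit $e_a$, the composition $\gA \to e_a \gA[a\bl]$, $x \mapsto e_a x$, is a unital ring homomorphism sending $a$ to a unit whose inverse is $a\bl$ (because $a \cdot a\bl = e_a$). By universality this extends uniquely to a homomorphism $\varphi: \gA[1/a] \to e_a \gA[a\bl]$ sending $x/a^n$ to $x (a\bl)^n$ for $n \geq 1$ and $x \mapsto e_a x$ for $n = 0$. Surjectivity is immediate since $e_a (a\bl)^i = (a\bl)^i$ for $i \geq 1$ makes every element of $e_a \gA[a\bl]$ of the shape $e_a x_0 + \sum_{i \geq 1} x_i (a\bl)^i$. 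For injectivity, if $\varphi(x/a^n) = x (a\bl)^n = 0$ inside $\gC$, multiplying by $a^{n+1}$ and using $a^n (a\bl)^n = (a a\bl)^n = e_a$ gives $a e_a x = a x = 0$ in $\gA$, whence $x/a^n = 0$ in $\gA[1/a]$.

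Items 5 and 6 are then obtained by iteration. For item 5 the same universal-property argument works verbatim with three elements: in $(e_a e_b e_c) \gA[a\bl, b\bl, c\bl]$ each of $a, b, c$ is a unit with inverse $a\bl, b\bl, c\bl$, so $abc$ is invertible, and the analogous surjectivity/injectivity arguments apply. For item 6 the hypothesis $abc = 0$ forces both $c$ and $c\bl$ to vanish inside $(e_a e_b) \gA[a\bl, b\bl, c\bl]$: multiplying $abc = 0$ by $a\bl b\bl$ yields $e_a e_b c = 0$, while $e_a e_b e_c = a\bl b\bl c\bl \cdot abc = 0$ combined with $c\bl = e_c c\bl$ yields $e_a e_b c\bl = 0$. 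Consequently $(e_a e_b) \gA[a\bl, b\bl, c\bl] = (e_a e_b) \gA[a\bl, b\bl]$, and the two-variable version of item 5 (equivalently, item 4 applied twice) identifies this with $\gA[1/(ab)]$. The main obstacle I anticipate is the bookkeeping in items 4--6, specifically keeping track of which ring supplies the ambient identity at each step and ensuring the injectivity argument uses only identities valid in the enveloping ring $\gC$.
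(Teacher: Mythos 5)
Your proposal is correct and follows essentially the same route as the paper: items 1--3 are dismissed as routine, items 4--5 are handled by the universal property of the localisation $\gA[1/a]$ (resp.\ $\gA[1/(abc)]$) with surjectivity via $e_a(a\bl)^i=(a\bl)^i$ and injectivity via the kernel computation $x(a\bl)^n=0\Rightarrow ax=0$, and item 6 reduces to item 5 by showing $e_ae_bc\bl=0$ so that the generator $c\bl$ disappears. The only cosmetic difference is that you prove item 4 first and iterate, whereas the paper proves item 5 directly and treats item 4 as a special case.
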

\begin{proof}
The three first items are easy and well known. Let us see 
\ref{itemlemzedred0}.
In  the ring  $\gB=(e_ae_be_c)\gA[a\bl,b\bl,c\bl]$, $abc$  is 
invertible, with inverse
$a\bl b\bl c\bl$. Thus the \homo   
$$
\psi\,:\,\gA\vers{j}\gA[a\bl,b\bl,c\bl]\vers{x\mapsto e_ae_be_cx} 
\gB$$ factorises with a unique  $\theta$ in the following way 
$$
\gA\vers{\pi}\gA[1/(abc)]\vers{\theta}\gB
.$$ 
Since $\gA\subseteq\gC$, $j$ is injective 
and we can identify $x\in\gA$ and $j(x)$. 
The \homo $\theta$ is surjective because
$\theta(1/abc)= a\bl b\bl c\bl=u$ and in $\gB$, $a\bl=bcu,\, 
b\bl=acu, \,c\bl=abu$. On the other hand  $\Ker\,\pi=\Ann_\gA(abc)\subseteq 
\Ker\,\psi $ and if
$x\in\Ker\,\psi$,  then $ e_ae_be_cx=e_{abc}x=0$, thus $abcx=0$. \\
Let us see \ref{item2lemzedred0}. Since $abc=0$, 
$0=e_{abc}=e_ae_be_c$ and in $(e_ae_b)\gA[a\bl,b\bl,c\bl]=\gB_1$ one has
$c\bl=e_ae_bc\bl=e_ae_b(e_cc\bl)=0$ thus 
$\gB_1=(e_ae_b)\gA[a\bl,b\bl]$ and we conclude with \ref{itemlemzedred0}.
\end{proof}

The two last items generalise with an arbitrary
finite number of \elts of $\gA$.

\smallskip 
A possible interpretation of Lemma \ref{lemzedred0} is 
that it works as a formalisation of what happens when we do
dynamic computations in a reduced ring 
 \gui{as if} it were a subring of a field.
Item \emph{\ref{item3lemzedred0}} says that this dynamical computation is possible (at least if
we can find $\gC$). Last items show that  this dynamical computation
 can mimic efficiently the localisation at a \idemiz.

\setcounter{section}{2}
\setcounter{theorem}{0}
\subsection*{B. Reduced rings as subrings of a \zed reduced ring} 
\addcontentsline{toc}{subsection}{B. A reduced ring as subring of a \zed reduced ring}

Since the notion of \zed reduced ring is purely equational, 
universal algebra says that any commutative ring generates
a \zed reduced ring (this gives the adjoint functor to the forgetful functor). We have to see that if the ring 
$\gA$ is reduced, the \homo from~$\gA$ to the  \zed reduced ring
it generates  is injective.

\begin{lemma} 
\label{lem1Zedred1} 
If $\gA\subseteq\gC$ with $\gC$ \zed reduced, and if  
$x\bl$ denotes the quasi inverse of $x$, then
the ring $\gA[(a\bl)_{a\in\gA}]$ is \zed  
(thus it is the least  \zed subring of $\gC$ containing~$\gA$).\\
Variant: if  $\gA\subseteq\gB$ are reduced rings, and if each $a\in\gA$ has a quasi inverse $a\bl$ in $\gB$, then the ring  $\gA[(a\bl)_{a\in\gA}]$ 
is \zedz.
\end{lemma}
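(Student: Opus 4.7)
The plan is to prove the variant (the first statement is the case $\gB = \gC$). Set $\gA' := \gA[(a\bl)_{a\in\gA}]$. Reducedness of $\gA'$ is inherited from $\gB$, so by Fact~\ref{factZDRQI} it is enough to show that each $x \in \gA'$ admits a quasi-inverse lying inside $\gA'$.

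Given $x \in \gA'$, I choose a finite list $a_1, \ldots, a_n \in \gA$ with $x \in \gA[a_1\bl, \ldots, a_n\bl]$ and form the $2^n$ orthogonal idempotents
\[\varepsilon_J := \prod_{i \in J} e_{a_i} \cdot \prod_{i \notin J}(1-e_{a_i}),\qquad J \subseteq \{1,\ldots,n\},\]
all lying in $\gA'$ and summing to $1$. The structural observation at the heart of the argument is that, in the component $\varepsilon_J \gB$, one has $\varepsilon_J a_i = \varepsilon_J a_i\bl = 0$ for $i \notin J$, whereas for $i \in J$ the equation $\varepsilon_J a_i \cdot \varepsilon_J a_i\bl = \varepsilon_J e_{a_i} = \varepsilon_J$ shows that $\varepsilon_J a_i\bl$ is the inverse of $\varepsilon_J a_i$. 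Collapsing the $\gA$-polynomial expression of $x$ over the common \gui{denominator} $\prod_{i \in J} a_i^{k_J}$ (whose inverse in $\varepsilon_J \gB$ is $\prod_{i \in J}(a_i\bl)^{k_J}$) then puts $\varepsilon_J x$ in the normal form
\[\varepsilon_J x \;=\; \varepsilon_J\, Q_J \prod_{i \in J}(a_i\bl)^{k_J},\qquad Q_J \in \gA,\ k_J \in \NN.\]

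With this normal form in hand, the candidate quasi-inverse of $x$ is
\[y \;:=\; \sum_J \varepsilon_J\, Q_J\bl \prod_{i \in J} a_i^{k_J},\]
which lies in $\gA'$ because $Q_J \in \gA$ makes $Q_J\bl$ one of the adjoined generators. Using orthogonality of the $\varepsilon_J$ together with the relations $a_i^{k_J}(a_i\bl)^{k_J} = e_{a_i}$, $\varepsilon_J \prod_{i\in J} e_{a_i} = \varepsilon_J$, $Q_J^2 Q_J\bl = Q_J$ and $Q_J(Q_J\bl)^2 = Q_J\bl$, a short direct computation will yield $x^2 y = x$ and $xy^2 = y$. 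The \gui{least} clause is then automatic: any \zed reduced subring of $\gB$ containing $\gA$ must contain the (unique in $\gB$) quasi-inverse $a\bl$ of each $a \in \gA$, hence the whole of $\gA'$.

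The main obstacle is the common-denominator step producing the normal form $\varepsilon_J x = \varepsilon_J Q_J \prod_{i\in J}(a_i\bl)^{k_J}$: one must keep careful track, inside the subring $\varepsilon_J \gA[a_1\bl,\ldots,a_n\bl]$ of $\varepsilon_J \gB$, of how $\varepsilon_J$ kills the monomials involving $a_j\bl$ for $j \notin J$ while the remaining monomials are rewritten over a single power of $\prod_{i\in J} a_i$. Everything else — constructing the $\varepsilon_J$ and verifying the two quasi-inverse identities — is a routine formal manipulation.
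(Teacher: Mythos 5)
Your proposal is correct and follows essentially the same route as the paper: decompose via a fundamental system of orthogonal idempotents built from the $e_{a_i}=a_ia_i\bl$, put each component in the normal form (element of $\gA$)$\,\times\,$(quasi-inverse of an element of $\gA$) using $(ab)\bl=a\bl b\bl$, and obtain the quasi-inverse componentwise by swapping the two factors. Your version merely makes explicit the $2^n$ idempotents $\varepsilon_J$ and the common-denominator step that the paper's proof leaves terse.
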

\begin{proof}
We have to show that each \elt of $\gA[(a\bl)_{a\in\gA}]$ has a 
quasi inverse. Since $(ab)\bl=a\bl b\bl$ each \elt of 
$\gA[(a\bl)_{a\in\gA}]$ can be written $\sum a_i b_i\bl$ with $a_i, b_i\in\gA$.
On the other hand  $a_i b_i\bl =a_i b_i\bl r_i$  with $r_i=a_i a_i \bl$ 
idempotent. Moreover if we have \idms $r_1,\ldots ,r_k$ 
they generate a Boolean algebra containing a \sfio  $e_1,\ldots 
,e_n$ such that  $r_i=\sum_{e_jr_i=e_j}e_j$ ($i\in\so{1,\ldots,k}$). 
Finally if $e_1,\ldots,e_n$ is a \sfio  in $\gC$, if $a_1,\ldots 
,a_n,b_1,\ldots ,b_n\in\gA$, if  
 $c=\sum_{i=1}^na_ib_i\bl e_i$ and $c'=\sum_{i=1}^na_i\bl b_ie_i$, 
then $c^2c'=c$ and $c'^2c=c'$, thus $c'=c\bl$.
\end{proof}

\begin{lemma} 
\label{lem1Zedred2} 
Let $\gA$ be a reduced ring and $a\in\gA$. 
Let $\gB=\aqo{\gA[T]}{aT^2-T,a^2T-a}$ and $\gC=\gB\red$.
Let $a\bl$ be the image of $T$ in $\gC$. Then 
\begin{enumerate}
\item $\gC\simeq (\aqo{\gA}{a})\red\times \gA[1/a]$ and the natural \homo  
$\gA\to\gC$ is injective (one identifies $\gA$ to a subring of 
$\gC$).
\item $a\bl$ is quasi inverse of $a$ in $\gC.$
\item For any \homo $\gA\vers{\varphi}\gA'$ such that  $\varphi(a)$ 
has a quasi inverse in $\gB$, there exists a unique  \homo 
$\gC\vers{\theta} \gA'$ such that  the \homo $\gA\to\gC\vers{\theta} \gA'$ 
is equal to~$\varphi$.
\end{enumerate}
\end{lemma}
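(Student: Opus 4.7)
The plan is to build the structure of $\gB$ via an idempotent decomposition, from which all three parts follow readily.

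First, consider $e = aT \in \gB$. The defining relations yield $e^2 = a^2T^2 = a(aT^2) = aT = e$, so $e$ is idempotent, giving $\gB \cong \gB[1/e] \times \gB/\gen{e}$. In the quotient $\gB/\gen{e}$, the relation $aT = 0$ combined with $aT^2 = T$ forces $T = 0$, and then $a^2T = a$ forces $a = 0$; so $\gB/\gen{e} \cong \aqo{\gA}{a}$. In $\gB[1/e]$, the idempotent $e = aT$ becomes $1$, so $T$ is the inverse of $a$, and the two defining relations become trivial; hence $\gB[1/e] \cong \gA[1/a]$. Therefore $\gB \cong \gA[1/a] \times \aqo{\gA}{a}$, and since $\gA[1/a]$ is already reduced (because $\gA$ is), passing to $\gC = \gB\red$ yields $\gC \simeq (\aqo{\gA}{a})\red \times \gA[1/a]$.

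For the injectivity of the natural map $\gA \to \gC$, suppose $x \in \gA$ maps to zero. Under the product decomposition this means $a^m x = 0$ for some $m \geq 0$ and $x^n = ay$ for some $n \geq 1$ and $y \in \gA$. From $a^m x^n = (a^m x)\, x^{n-1} = 0$ together with $x^n = ay$ one deduces $a^{m+1} y = 0$. Hence $x^{(m+1)n} = (ay)^{m+1} = a^{m+1} y^{m+1} = (a^{m+1} y)\, y^m = 0$, and reducedness of $\gA$ forces $x = 0$.

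Part 2 is immediate: the relations $a^2 T = a$ and $a T^2 = T$ hold in $\gB$ and therefore in $\gC$, which are exactly the axioms making $a\bl$ (the image of $T$) a quasi-inverse of $a$ in $\gC$.

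For part 3, given $\varphi : \gA \to \gA'$ with $\gA'$ reduced such that $\varphi(a)$ has quasi-inverse $b$ in $\gA'$, extend $\varphi$ to a ring map $\gA[T] \to \gA'$ by $T \mapsto b$. The quasi-inverse axioms $\varphi(a)^2 b = \varphi(a)$ and $\varphi(a) b^2 = b$ send the defining relations of $\gB$ to zero, giving a map $\gB \to \gA'$; reducedness of $\gA'$ makes this factor uniquely through $\gC$, and the value $\theta(T) = b$ is forced by uniqueness of quasi-inverses. The main delicate step is the injectivity argument in the second paragraph, where the two pieces of nilpotence data must be combined through the exponent $(m+1)n$ in order to exploit reducedness of $\gA$.
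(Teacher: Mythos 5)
Your proof is correct. The paper gives no argument here (it explicitly leaves the proof to the reader), but the idempotent decomposition via $e=aT$ is clearly the intended route, since item \emph{1} of the lemma asserts precisely the resulting product $\gC\simeq(\aqo{\gA}{a})\red\times\gA[1/a]$; your verification that $e^2=e$, the identification of the two factors, and the combination of the two nilpotence data through the exponent $(m+1)n$ to get injectivity are all sound. One remark on item \emph{3}: the statement as printed says \gui{quasi inverse in $\gB$}, which must be read as \gui{in $\gA'$}, and you correctly noticed that factoring $\gB\to\gA'$ through $\gC=\gB\red$ requires $\gA'$ to be reduced --- a hypothesis not written in the lemma but satisfied in every application the paper makes of it (Theorem \ref{thAnnexe1} only quantifies over \zed reduced rings $\gA'$), and genuinely necessary, as one sees already with $\gA=\ZZ$, $a=4$, $\gA'=\ZZ/4\ZZ$.
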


The proof is left to the 
reader. The following corollary is a consequence of the strong unicity 
property given in Lemma~\ref{lem1Zedred2}.

\begin{corollary} 
\label{corlem1Zedred2} 
Let $a_1,\ldots, a_n \in \gA$. Then the ring 
we obtain by repeating the construction of Lemma \ref{lem1Zedred2}
 for each $a_i$
does not depend, up to unique \isoz, of the ordering of $a_i$'s.
\end{corollary}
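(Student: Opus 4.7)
The plan is to exhibit an ordering-independent universal property for the iterated ring and then invoke the standard Yoneda argument. Write $\gA_\sigma$ for the ring obtained by applying Lemma \ref{lem1Zedred2} successively to $a_{\sigma(1)}, \ldots, a_{\sigma(n)}$, where $\sigma$ is a permutation of $\{1, \ldots, n\}$. I would prove by induction on $n$ that $\gA_\sigma$ solves the following universal problem: it is a reduced $\gA$-algebra in which each $a_i$ admits a quasi-inverse, and every $\gA$-algebra homomorphism $\varphi : \gA \to \gA^\dagger$ into a reduced ring $\gA^\dagger$ in which all $\varphi(a_i)$ have quasi-inverses factors in a unique way through $\gA_\sigma$.

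The base case $n=0$ gives $\gA_\sigma = \gA$ and the universal statement is immediate. For the inductive step, write $\sigma'$ for the restriction of $\sigma$ to $\{1, \ldots, n-1\}$, set $\gA' = \gA_{\sigma'}$, and apply Lemma \ref{lem1Zedred2} to the reduced ring $\gA'$ with the element $a_{\sigma(n)}$ to obtain $\gA_\sigma$. By item~1 of Lemma \ref{lem1Zedred2}, $\gA_\sigma$ is reduced and contains $\gA'$, so the quasi-inverses of $a_{\sigma(1)}, \ldots, a_{\sigma(n-1)}$ already present in $\gA'$ persist in $\gA_\sigma$ (uniqueness of quasi-inverses rules out any disturbance), and a quasi-inverse of $a_{\sigma(n)}$ has been added. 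Given any reduced target $\gA^\dagger$ equipped with quasi-inverses for all $\varphi(a_i)$, the inductive hypothesis yields a unique $\gA$-algebra map $\gA' \to \gA^\dagger$, and then item~3 of Lemma \ref{lem1Zedred2}---applied with $\gA'$ playing the role of $\gA$ and the image of $a_{\sigma(n)}$ playing the role of $a$---furnishes a unique extension $\gA_\sigma \to \gA^\dagger$. Composition provides the required factorisation of $\varphi$, and uniqueness at each layer propagates upward.

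The universal property thus verified is manifestly symmetric in $a_1, \ldots, a_n$ and depends only on this family, so by the usual Yoneda argument $\gA_\sigma$ is determined up to unique isomorphism; in particular, for any two permutations $\sigma$ and $\tau$ there is a unique $\gA$-algebra isomorphism $\gA_\sigma \to \gA_\tau$. The main obstacle is essentially bookkeeping: one must ensure that at each inductive step the universal property of Lemma \ref{lem1Zedred2} is applied to the correct intermediate base ring, that quasi-inverses already produced are preserved when a new one is adjoined, and that the successive unique extensions compose into a single unique factorisation out of $\gA$.
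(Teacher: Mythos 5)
Your argument is correct and is exactly the elaboration the paper intends: the paper leaves the proof to the reader, remarking only that the corollary follows from the strong unicity property of Lemma \ref{lem1Zedred2}, and your induction establishes the order-independent universal property (via iterated use of item~3) from which the unique isomorphism follows. Since every $\gA_\sigma$ is itself a reduced ring in which each $a_i$ has a quasi-inverse, restricting the targets to reduced rings loses nothing, and the Yoneda-style conclusion is sound.
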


Example: let us denote $\gA_{\so{a}}$ the ring  constructed in Lemma \ref{lem1Zedred2}; let $a,b,c\in\gA$; then there exists a unique  $\gA$-\homo from 
$((\gA_{\so{a}})_{\so{b}})_{\so{c}}$ to 
$((\gA_{\so{c}})_{\so{b}})_{\so{a}}$ and it is  an \isoz.

Lemma \ref{lem1Zedred2} and Corollary \ref{corlem1Zedred2} give
the following \thoz.

\begin{theorem} 
\label{thAnnexe1} 
Let $\gA$ be a reduced ring. We denote by  $\wh{\gA}$ the ring we
obtain as filtered colimit by iterating the  construction of Lemma 
\ref{lem1Zedred2} (Corollary \ref{corlem1Zedred2} says that this works). \\
Then  $\wh{\gA}$  is a \zed reduced ring and the natural \homo  
$\gA\to \wh{\gA}$ is injective. Moreover 
this ring is \emph{the \zed reduced ring generated by $\gA$} with
the precise following meaning: for any \zed reduced ring $\gA'$, any \homo 
$\gA\vers{\varphi }\gA'$ factorises in a unique way  via the natural \homo
$\gA\to\wh{\gA}$.
\end{theorem}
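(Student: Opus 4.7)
The plan is to combine Lemma \ref{lem1Zedred2} with straightforward properties of filtered colimits to verify each of the four claims: well-definedness of $\wh{\gA}$, injectivity of $\gA \to \wh{\gA}$, the fact that $\wh{\gA}$ is zero-dimensional reduced, and the universal property. First, well-definedness. Corollary \ref{corlem1Zedred2} says that iterating the one-element construction $\gA \rightsquigarrow \gA_{\{a\}}$ over any finite sequence of elements produces a ring that is independent (up to unique $\gA$-isomorphism) of the order chosen. Taking the directed system indexed by finite subsets of $\gA$ (and, inductively, of the rings already constructed), the filtered colimit $\wh{\gA}$ is therefore well-defined.

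Second, injectivity and reducedness. Lemma \ref{lem1Zedred2}.1 gives both that the transition map $\gR \to \gR_{\{a\}}$ is injective and that $\gR_{\{a\}}$ is reduced whenever $\gR$ is. By induction every transition map in the directed system is an injection between reduced rings, so the filtered colimit $\wh{\gA}$ is reduced and the canonical map $\gA \to \wh{\gA}$ is injective (filtered colimits preserve monomorphisms of rings, and if $x^n = 0$ in the colimit it is already zero at some finite stage).

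Third, zero-dimensionality. Any $x \in \wh{\gA}$ is represented at some finite stage $\gR$ in the directed system; at the very next stage, namely $\gR_{\{x\}}$, by construction $x$ acquires a quasi-inverse $x^\bullet$, which persists in $\wh{\gA}$. Thus every element of $\wh{\gA}$ has a quasi-inverse, so by Fact \ref{factZDRQI} the ring $\wh{\gA}$ is zero-dimensional reduced.

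Fourth, the universal property, which I expect to be the main point requiring care. Let $\varphi : \gA \to \gA'$ be a ring homomorphism into a zero-dimensional reduced ring. I will extend $\varphi$ stage by stage along the directed system. At each stage, given an extension $\varphi_\gR : \gR \to \gA'$ and an element $a \in \gR$, the image $\varphi_\gR(a)$ has a (unique) quasi-inverse in $\gA'$ by Fact \ref{factZDRQI}; the universal property in Lemma \ref{lem1Zedred2}.3 then produces a unique extension $\varphi_{\gR_{\{a\}}} : \gR_{\{a\}} \to \gA'$ compatible with $\varphi_\gR$. The uniqueness at each step guarantees compatibility of the extensions along all transition maps in the directed system, so the universal property of the filtered colimit yields a unique homomorphism $\wh{\varphi} : \wh{\gA} \to \gA'$ with $\wh{\varphi} \circ (\gA \to \wh{\gA}) = \varphi$. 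The hard part is purely bookkeeping: one must check that the transfinite iteration (or, equivalently, the directed system indexed by finite tuples of already-constructed elements) is well-defined and that the extensions at all intermediate stages really do assemble coherently; this rests entirely on the strong unicity already guaranteed by Lemma \ref{lem1Zedred2}.3 and Corollary \ref{corlem1Zedred2}.
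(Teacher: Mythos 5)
Your proof is correct, and it supplies exactly the bookkeeping that the paper compresses into the single sentence ``Lemma \ref{lem1Zedred2} and Corollary \ref{corlem1Zedred2} give the following theorem.'' The one place where you genuinely diverge from the paper's intended route is the zero-dimensionality of $\wh{\gA}$. The paper's construction (see Theorem \ref{thRedZed}) only ever adjoins quasi inverses $a\bl$ for $a$ ranging over the \emph{original} ring $\gA$, and the fact that this already yields a \zed ring is precisely the content of Lemma \ref{lem1Zedred1}: every element of $\gA[(a\bl)_{a\in\gA}]$ can be written $\sum_i a_i b_i\bl e_i$ over a \sfioz, and one exhibits its quasi inverse explicitly. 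You instead enlarge the directed system so that quasi inverses are adjoined for all elements of all intermediate stages, which makes zero-dimensionality immediate from Fact \ref{factZDRQI} and lets you bypass Lemma \ref{lem1Zedred1} entirely. This is a legitimate trade: you avoid the one nontrivial computation of the section, at the cost of needing Corollary \ref{corlem1Zedred2} (order-independence) in a slightly stronger form than literally stated, since the corollary is phrased for $a_1,\ldots,a_n\in\gA$ rather than for elements of the intermediate rings; the strong unicity of Lemma \ref{lem1Zedred2}.3 does extend it, but that extension is part of the ``purely bookkeeping'' you defer. Note also that by the unicity of quasi inverses and Lemma \ref{lem1Zedred2}.3 applied to the identity, adjoining $x\bl$ for an $x$ that already has a quasi inverse changes nothing, so your larger colimit coincides with the paper's $\gA[(a\bl)_{a\in\gA}]$. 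The remaining steps (injectivity and reducedness propagate through the injective transition maps of Lemma \ref{lem1Zedred2}.1 and survive the filtered colimit; the universal property is assembled stage by stage from Lemma \ref{lem1Zedred2}.3) match the intended argument.
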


In a shorter form:
\begin{theorem} 
\label{thRedZed} 
Any reduced ring $\gA$ is contained in a \zed reduced ring
$\gC=\gA[(a\bl)_{a\in\gA}]$.
\end{theorem}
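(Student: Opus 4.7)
The plan is to derive this statement as an immediate combination of Theorem~\ref{thAnnexe1} and the ``variant'' part of Lemma~\ref{lem1Zedred1}. The construction work has already been done: the hard theorem is~\ref{thAnnexe1}, which provides an injection of $\gA$ into some \zed reduced ring $\wh{\gA}$. What remains is a cosmetic reformulation saying that one does not need to iterate the $a\bl$ operator transfinitely---adding only the quasi-inverses of the \emph{original} elements of $\gA$ already suffices.

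First, I would invoke Theorem~\ref{thAnnexe1} to obtain a \zed reduced ring $\wh{\gA}$ with an injective \homo $\gA\hookrightarrow\wh{\gA}$. Inside $\wh{\gA}$ every element, and in particular every $a\in\gA$, admits a quasi-inverse, which I denote $a\bl\in\wh{\gA}$. I then set
\[ \gC \;=\; \gA\bigl[(a\bl)_{a\in\gA}\bigr]\;\subseteq\;\wh{\gA}. \]
Since $\wh{\gA}$ is reduced, $\gC$ is automatically reduced; and since $\gA\hookrightarrow\wh{\gA}$ is injective, the composite $\gA\hookrightarrow\gC$ is injective too, so $\gA$ is a subring of $\gC$.

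The only nontrivial point is that $\gC$ itself is \zedz, and this is precisely the variant statement of Lemma~\ref{lem1Zedred1}: if $\gA\subseteq\gB$ are reduced rings and every $a\in\gA$ has a quasi-inverse $a\bl\in\gB$, then $\gA[(a\bl)_{a\in\gA}]$ is \zedz. Applying this with $\gB=\wh{\gA}$ finishes the proof.

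I do not expect any real obstacle here; the substance of the argument is already contained in Theorem~\ref{thAnnexe1} and Lemma~\ref{lem1Zedred1}. The only thing to be careful about is the distinction between the full iterative closure $\wh{\gA}$ (which is what Theorem~\ref{thAnnexe1} produces) and the one-step subring $\gA[(a\bl)_{a\in\gA}]$; the ``variant'' half of Lemma~\ref{lem1Zedred1} is exactly what bridges the two and guarantees that a single layer of quasi-inverses already yields a \zed ring.
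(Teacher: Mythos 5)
Your proposal is correct and is essentially the paper's intended argument: Theorem~\ref{thRedZed} is stated there as ``a shorter form'' of Theorem~\ref{thAnnexe1}, and your explicit combination of that theorem with Lemma~\ref{lem1Zedred1} (to see that one layer of quasi-inverses already gives a \zed reduced subring) is exactly the reading the authors intend --- indeed $\wh{\gA}$ is by construction generated by $\gA$ and the $a\bl$ for $a\in\gA$, so your $\gC$ coincides with $\wh{\gA}$. No gap.
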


\setcounter{section}{3}
\setcounter{theorem}{0}
\subsection*{C. Zero-dimensional reduced rings and fields} 
\addcontentsline{toc}{subsection}{C. Zero-dimensional reduced rings and fields}

We  said that the notion of \zed reduced ring is 
\emph{the good equational generalisation} of the notion of 
field. In particular any equational consequence 
of field theory is an equational consequence of the theory of \zed 
reduced rings.

In an informal way we can give the following local-global 
\elr  principle.

\medskip \noindent 
{\bf Local-global \elr machinery:     
from discrete fields to  \zed reduced rings.
}\label{mlge}
{\it Most \algos that work  with discrete fields  
can be modified in order to work with  
\zed reduced rings, decomposing the ring
in the product of two components each time the \algo (written for discrete fields) uses the test
\gui{Is this \elt zero or invertible?}. In the first component 
the \elt is zero, in the second one it is invertible.}

\medskip 
We have written \gui{most} rather than \gui{all} because the result of the
\algo given for discrete fields has to be written in a form where
there is no reference to the connectedness of a discrete field.

\medskip Applying the previous local-global machinery allows to get
 \Tho \ref{thZedLib} from Lemma \ref{lemPicGcd}, as soon as we have seen
 that this lemma gives an  \algo for discrete fields.

\begin{theorem} 
\label{thZedLib} 
Let $\gC$ be  a \zed reduced ring. Then any \mrcz~$1$
over $\gC[\uX]$ is free.
\end{theorem}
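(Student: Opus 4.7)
The plan is to invoke the local–global elementary machinery stated just above the theorem, transferring the constructive content of Lemma \ref{lemPicGcd} from discrete fields to zero-dimensional reduced rings, and then reassemble via idempotents.

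First, I would observe that Lemma \ref{lemPicGcd} is, for a discrete field $\gK$, genuinely algorithmic: the polynomial ring $\gK[\uX]$ is a gcd domain (unique factorisation actually), and the proof of that lemma transforms any idempotent rank-$1$ matrix $P=(m_{ij})\in\gK[\uX]^{n\times n}$ into explicit $f_i,g_j\in\gK[\uX]$ with $m_{ij}=f_ig_j$, whence $\Im P$ is free by Lemma \ref{lempropImProjLib}. The internal primitives are: (a) picking a diagonal entry that is regular (here equivalent to non-zero); (b) computing gcds in $\gK[\uX]$; and (c) a handful of divisions. After a Kronecker substitution these reduce to univariate Euclidean divisions, so the only branching ever asked of the underlying ring is the test ``is this element of $\gK$ zero or invertible?''.

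Second, I would apply the machinery to such an algorithm run over $\gC[\uX]$. For every $a\in\gC$ the idempotent $e_a=aa\bl$ (Fact \ref{factZDRQI}) produces a splitting $\gC\simeq e_a\gC\times(1-e_a)\gC$ where $a$ is invertible in the first factor and zero in the second (Lemma \ref{lemzedred0}). Each time the discrete-field algorithm applied to $P$ would ask ``is $a\in\gC$ zero or invertible?'' we perform this split and continue in both factors in parallel. The tree of splittings is finite, and at each leaf we end with a ring $\gC_r$ together with polynomials $f^{(r)}_i,g^{(r)}_j\in\gC_r[\uX]$ satisfying $m_{ij}=f^{(r)}_ig^{(r)}_j$.

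Third, I would reassemble. The leaves of the tree correspond to a finite \sfio $e_1,\ldots,e_N$ of $\gC$ with $\gC\simeq\prod_r e_r\gC$ and $\gC_r=e_r\gC$. Defining $f_i=\sum_r e_rf^{(r)}_i$ and $g_j=\sum_r e_rg^{(r)}_j$ in $\gC[\uX]$, the identity $m_{ij}=f_ig_j$ holds globally; Lemma \ref{lempropImProjLib} then shows that the image of $P$ is free of rank $1$.

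The only genuine subtlety — and what the authors flag with the caveat ``the result of the algorithm given for discrete fields has to be written in a form where there is no reference to the connectedness of a discrete field'' — is ensuring that the normalisation step at the start of Lemma \ref{lemPicGcd}'s proof, namely choosing a regular diagonal entry, is made branch by branch and gives outputs that glue consistently across the idempotent partition. Since the relation $\sum_i m_{ii}=1$ in $\gC[\uX]$ gives $\sum_i m_{ii}(0)=1$ in $\gC$, after a finite idempotent splitting each component $\gC_r$ admits some index $i(r)$ with $m_{i(r),i(r)}(0)$ invertible in $\gC_r$, and this pivot choice is absorbed harmlessly into the partition $(e_r)$ before the rest of the algorithm is run. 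Once this is arranged, the rest of the proof is the mechanical unrolling of Lemma \ref{lemPicGcd} component by component.
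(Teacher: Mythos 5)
Your proof is correct and is essentially the paper's own derivation: the authors obtain Theorem \ref{thZedLib} exactly by applying the local-global elementary machinery to the algorithm underlying Lemma \ref{lemPicGcd}, splitting $\gC$ along the idempotents $e_a$ at each \gui{zero or invertible?} test and regluing along the resulting fundamental system of orthogonal idempotents. The paper's Annex E only repackages this same splitting into the formal framework of gcd pp-rings (with Corollary \ref{corlemQI2} playing the role of your pivot normalisation), so no new idea separates the two arguments.
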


For the sceptical reader, we give some details in Annex E. 

\setcounter{section}{4}
\setcounter{theorem}{0}
\subsection*{D. Traverso-Swan's \thoz: general case} 
\addcontentsline{toc}{subsection}{D. Traverso-Swan's \thoz: general case}
\begin{Proof}{New \prco of Lemma~\ref{lemThierry}}
Theorems \ref{thRedZed} and \ref{thZedLib} imply there exists a
\zed reduced ring $\gC=\gA[(a\bl)_{a\in\gA}]\supseteq\gA$ with  
$\Im\,P$ 
free over $\gC[\uX]$. This property remains true for 
a ring $\gB\subseteq\gC$  generated by a finite number of quasi 
inverses $a_1\bl,\ldots ,a_r\bl$ of \elts of $\gA$. We write 
$e_i=a_ia_i\bl$ ($e_i$ is an \idm
such that  $e_ia_i=a_i$ and $e_ia_i\bl=a_i\bl$) and $e'_i=1-
e_i$. We give the argument for $r=3$ but it is clear that the argument 
is general. We decompose the ring  $\gB$  in a product of $2^r$ 
rings. Equivalently we write the ring as a direct sum of $2^r$ 
ideals.
\begin{equation}  \label{eqlemThierry}
\gB=e_1e_2e_3\gB\oplus e_1e_2e'_3\gB\oplus e_1e'_2e_3\gB\oplus 
e_1'e_2e_3\gB\oplus e_1e'_2e'_3\gB\oplus e'_1e_2e'_3\gB\oplus 
e'_1e'_2e_3\gB\oplus e'_1e'_2e'_3\gB.  
\end{equation}
Lemma \ref{lemzedred0} Item \emph{\ref{itemlemzedred0}} shows that
$$e_1e_2e_3\gB\simeq e_1e_2e_3\gA[a_1\bl, a_2\bl, a_3\bl)] 
\simeq\gA[1/(a_1a_2a_3)]$$
Since the module   $\Im\,P$  is free over $\gB[\uX]$, it is free
over each of the $2^r$ components. In particular it is free
over $e_1e_2e_3\gB[\uX]\simeq\gA[1/(a_1a_2a_3)][\uX]$. From the hypothesis
in Lemma~\ref{lemThierry} we get $a_1a_2a_3=0$, thus 
$e_1e_2e_3=0$,  $e_1e_2e_3'=e_1e_2$, etc\ldots, 
and the decomposition (\ref{eqlemThierry}) becomes
$$
\gB= e_1e_2\gB\oplus e_1e_3\gB\oplus e_2e_3\gB\oplus 
e_1e'_2e'_3\gB\oplus e'_1e_2e'_3\gB\oplus e'_1e'_2e_3\gB\oplus 
e'_1e'_2e'_3\gB.  
$$
Lemma \ref{lemzedred0} Item \emph{\ref{item2lemzedred0}} shows that  
$e_1e_2\gB\simeq\gA[1/(a_1a_2)]$. Since $P$ is free over this component we get $a_1a_2=0$, thus 
$e_1e_2=0$, $e_1e'_2=e_1$, $e'_1e_2=e_2.$ Similarly $a_1a_3=0=e_1e_3$, 
$a_2a_3=0=e_2e_3$ and finally $e_1e'_2e'_3=e_1$, $e'_1e_2e'_3=e_2$, 
$e'_1e'_2e_3=e_3$. 
We get a new decomposition
$$
\gB=  e_1\gB\oplus e_2\gB\oplus e_3\gB\oplus e'_1e'_2e'_3\gB.  
$$
At the end each $a_i$ is null and $\gB=\gA=\gA[1/1]$. So $1=0$ in $\gA$.
\end{Proof}

\begin{theorem} 
\label{thTSC}\emph{(Traverso-Swan-Coquand)}\\ 
If $\gA$ is a seminormal ring, then $\Pic\,\gA=\Pic\,\AuX$.\\
More precisely if a matrix $P(\uX)\in\AuX^{n\times n}=(m_{i,j}(\uX))_{i,j\in \so{1,\ldots 
,n}}$  is \idm 
of rank 1 and if $P(0)=\I_{n,1}$, then we can construct \pols 
$f_1,\ldots, f_n, g_1,\ldots, g_n\in\AuX$ such that  $m_{i,j}=f_ig_j$ 
for all $i,j$. 
\end{theorem}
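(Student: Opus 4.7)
The plan is to adapt the proof of \Tho~\ref{propIntSemin} to the general case, with the \zed reduced closure $\wh{\gA}$ playing the role of the fraction field. By Corollary~\ref{corpropComparRed} we may assume $\gA$ reduced (which is anyway forced by seminormality), and by Lemma~\ref{lemPicPic1} it suffices, given an \idm matrix $P(\uX)\in\AuX^{n\times n}$ of rank $1$ with $P(0)=\I_{n,1}$, to exhibit $f_1,\ldots,f_n,g_1,\ldots,g_n\in\AuX$ satisfying $m_{i,j}=f_ig_j$.

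First I apply Theorems~\ref{thRedZed} and~\ref{thZedLib}: over $\wh{\gA}[\uX]$ the image $\Im\,P$ is free, yielding $f_i,g_j\in\wh{\gA}[\uX]$ with $m_{i,j}=f_ig_j$, and after rescaling by the unit $f_1(0)\in\wh{\gA}^\times$ I may assume $f_1(0)=g_1(0)=1$. Kronecker's \Tho~\ref{thKro} applied to $m_{i,1}=f_ig_1$ and $m_{1,j}=f_1g_j$, together with the fact that $1$ is a \coe of each of $g_1$ and $f_1$, shows that every \coe of every $f_i$ and $g_j$ is integral over $\gA$. Let $\gB\subseteq\wh{\gA}$ be the $\gA$-subalgebra generated by these \coesz; then $\gB$ is reduced and finite as \Amoz. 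Let $\fa$ be the conductor of $\gA$ in $\gB$. The conclusion $f_i,g_j\in\gA[\uX]$ is equivalent to $\fa=\gen{1}$, \cad to $\gC:=\gA/\fa$ being trivial.

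Since $\gA$ is seminormal and $\gB$ reduced, Lemma~\ref{lemIntSemin1} says $\fa$ is a radical \id of $\gB$, so $\gC\subseteq\gC':=\gB/\fa$ are reduced. I apply Lemma~\ref{lemThierry} (elimination of a \idemiz) to $\gC$ and $\bar P:=P\bmod\fa$. To verify its hypothesis, assume $a\in\gA$ is such that $\Im\,\bar P$ is free over $\gC[1/a][\uX]$, with a normalized factorization $\bar m_{i,j}=f'_ig'_j$, $f'_1(0)=g'_1(0)=1$, and $f'_i,g'_j\in\gC[1/a][\uX]$. Inside the reduced ring $\gC'[1/a][\uX]$ I now have the second factorization $\bar m_{i,j}=\bar f_i\bar g_j$. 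By the uniqueness clause of Lemma~\ref{lempropImProjLib} the two factorizations differ by a unit $u\in(\gC'[1/a][\uX])^\times$; Lemma~\ref{lemUnitRedX} forces $u$ to be a constant in $(\gC'[1/a])^\times$, and evaluation at $0$ pins $u$ down to $1$. Hence $\bar f_i=f'_i\in\gC[1/a][\uX]$ and $\bar g_j=g'_j\in\gC[1/a][\uX]$, so for some $N$ the polynomials $a^Nf_i,a^Ng_j$ have \coes in $\gA+\fa=\gA$. Applying Lemma~\ref{lemIntSemin2} to the generators of $\gB$ as \Amo gives $a^N\in\fa$, and radicality of $\fa$ yields $a\in\fa$. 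Lemma~\ref{lemThierry} therefore concludes that $\gC$ is trivial.

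The main obstacle is the uniqueness step: from ``$\Im\,\bar P$ is free over $\gC[1/a][\uX]$'' I must extract that the particular polynomials $\bar f_i,\bar g_j$ (images of the factorization constructed over $\wh{\gA}$) already have \coes in $\gC[1/a]$, not merely in the larger ring $\gC'[1/a]$. This is the heart of the argument and combines, in a delicate way, the normalization $f_1(0)=g_1(0)=1$, the uniqueness clause of Lemma~\ref{lempropImProjLib}, Lemma~\ref{lemUnitRedX}, and the reducedness of $\gC'[1/a]$. A secondary subtlety is verifying that $\gB$ is finite as \Amo (so that Lemma~\ref{lemIntSemin2} applies), which requires integrality of each individual \coe of $f_i,g_j$, not merely of pairwise products; this is precisely where the normalization $g_1(0)=f_1(0)=1$ is used in the Kronecker step.
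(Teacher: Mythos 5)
Your proposal is correct and follows essentially the same route as the paper: replace the fraction field of the integral case by the zero-dimensional reduced ring $\wh{\gA}\supseteq\gA$ (Theorems~\ref{thRedZed} and~\ref{thZedLib}), use Kronecker's theorem with the normalization $f_1(0)=g_1(0)=1$ to get a finite extension $\gB$, and verify the hypothesis of Lemma~\ref{lemThierry} for $\gC=\gA/\fa$ via the uniqueness clause of Lemma~\ref{lempropImProjLib} together with Lemma~\ref{lemUnitRedX}, concluding with Lemmas~\ref{lemIntSemin1} and~\ref{lemIntSemin2}. The only differences are expository (you spell out the unit argument and the radicality step that the paper leaves as parenthetical remarks).
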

\begin{proof}
This proof is only a slight variation of the one given for the integral case.\\
We use the \carn given in Lemma \ref{lemPicPic1}. Let 
$P(\uX)=(m_{i,j}(\uX))_{i,j=1,\ldots ,n}$ be an \idm matrix of rank 
$1$  with
$P(0)=\I_{n,1}$. Let $\gK$ be a \zed reduced ring containing $\gA$.
On $\gK[\uX]$ the module  $\Im\,P(\uX)$ is free. Thus there exist  
$f=(f_1(\uX),\ldots ,f_n(\uX))$ and $g=(g_1(\uX),\ldots ,g_n(\uX))$ 
in
$\gK[\uX]^n$ such that  $m_{i,j}=f_ig_j$ for all $i,j$.
Moreover  since $f_1(0)g_1(0)=1$ and since we can modify $f$ and 
$g$ multiplying them by units, we can assume that 
$f_1(0)=g_1(0)=1$. Since $f_1g_j=m_{1,j}$ and using  
Kronecker \thoz, the \coes des $g_j$ are integral over the ring  generated  
by the \coes of $m_{1,j}$'s. In the same way the \coes of $f_i$'s are integral over the ring  generated by the \coes of $m_{i,1}$'s.\\
Let $\gB$ be the subring of $\gK$ generated by $\gA$ and by the
 \coes of $f_i$'s and $g_j$'s. Thus $\gB$ is a finite extension 
of  $\gA$ (i.e., $\gB$ is a  \tf \Amoz). We have to show $\gA=\gB$.
Let us call $\fa$ the conductor  of $\gA$ in $\gB$. Our aim is now to show 
$\fa=\gen{1}$, \cad  $\gA\sur{\fa}$ is trivial. \\
Following Lemma \ref{lemIntSemin1} $\fa$ is a  radical ideal of 
$\gB$. Lemma \ref{lemIntSemin2} applies with $\gA\subseteq\gB$. 
We have $\gA\sur{\fa}=\gC\subseteq \gB\sur{\fa}=\gC'$, which is reduced, 
and $f_ig_j=m_{i,j}$ in $\gB\sur{\fa}$. 
To show that  $\gC$ is 
trivial, it is sufficient to show that  $\gC$ satisfies, with the matrix 
$P$ mod $\fa$, the hypotheses of Lemma \ref{lemThierry}.\\
So let us consider an $a\in\gA$ such that   $\Im\,P$ 
is free over $\gC[1/a][\uX]$ and
let $\gC[1/a]=\gL\subseteq  \gC'[1/a]=\gL'$.
If  $x$ is an object defined over $\gA$ let us call
 $\ov{x}$ what it becomes after the change of ring  $\gA\to\gL'$.
The module  $\ov{M}$ is free over $\gL[\uX]$. 
This implies, by unicity (Lemma \ref{lempropImProjLib}) and 
since $f_1(0)=g_1(0)=1$, that the \pols $\ov{f_i}$ and $\ov{g_j}$ are in 
$\gL[\uX]$ (if $u(X)\in\gL[\uX]$ is invertible and $u(0)=1$, then $u=1$).\\
This means that there exists $N\in \NN$  such that  the \pols $a^Nf_i$ and 
$a^Ng_j$ have their \coes in $\gA$. Thus Lemma \ref{lemIntSemin2}
  implies that $a\in\fa$, i.e., $a=0$ in
$\gC$.
\end{proof}

If we use Lemma \ref{lemIntSemin1bis} instead of Lemma \ref{lemIntSemin1} we get the following more precise result.

\begin{theorem} 
\label{thTSCBis} 
If $\gA$ is a ring contained in
a \zed reduced  ring $\gB$ and $M$ a projective module
of rank $1$ over $\AuX$, there exist $c_{1},\ldots,c_{m}$ in  $\gB$ such that:
\begin{enumerate}
\item $c_{i}^2$ and $c_{i}^3$ are in $\gA[(c_{j})_{j<i}]$ for $i=1,\ldots,m$,
\item $M$ is free over $\gA[(c_{j})_{j\leq m}][X]$.
\end{enumerate}
\end{theorem}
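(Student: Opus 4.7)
The plan is to mimic the proof of Theorem \ref{thTSC}, but instead of assuming $\gA$ itself to be seminormal, I work with the seminormal closure of $\gA$ inside the finite integral extension produced by Kronecker's theorem, and record the elements added as the desired sequence $c_1,\ldots,c_m$.

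First, since $\gB$ is \zed reduced, Theorem \ref{thZedLib} gives that $\Im\,P(\uX)$ is free over $\gB[\uX]$. Lemma \ref{lempropImProjLib} then yields \pols $f_1,\ldots,f_n,g_1,\ldots,g_n \in \gB[\uX]$ with $m_{i,j}=f_ig_j$ and, after normalisation using $f_1(0)g_1(0)=1$, with $f_1(0)=g_1(0)=1$. Kronecker's Theorem \ref{thKro} shows that every \coe of the $f_i$ and $g_j$ is integral over $\gA$. Let $\gA'\subseteq \gB$ be the subring generated by $\gA$ together with all these \coesz; then $\gA'$ is a \tf $\gA$-module. Let $\gA_1\subseteq \gA'$ denote the seminormal closure of $\gA$ inside $\gA'$, obtained from $\gA$ by iteratively adjoining \elts $x\in\gA'$ with $x^2,x^3$ already present.

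The key step is to show $\gA_1=\gA'$, so that every \coe of the $f_i,g_j$ lies in $\gA_1$ and hence $M$ becomes free over $\gA_1[\uX]$. Let $\fa$ be the conductor of $\gA_1$ in $\gA'$. By Lemma \ref{lemIntSemin1bis}, $\fa$ is a radical \id of $\gA'$, and Lemma \ref{lemIntSemin2} then applies. Set $\gC=\gA_1/\fa\subseteq\gA'/\fa=\gC'$ (both reduced). Reducing $P$ modulo $\fa$, I verify the hypothesis of the elimination Lemma \ref{lemThierry}: given $a\in\gA_1$ for which $\Im\,\ov{P}$ is free over $\gC[1/a][\uX]$, the uniqueness part of Lemma \ref{lempropImProjLib} together with $f_1(0)=g_1(0)=1$ forces $\ov{f_i},\ov{g_j}$ to lie in $\gC[1/a][\uX]$, so some $a^N f_i,a^N g_j$ have \coes in $\gA_1$. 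Lemma \ref{lemIntSemin2} then yields $a\in\fa$, \cad $a=0$ in $\gC$. Lemma \ref{lemThierry} thus yields that $\gC$ is trivial, so $\fa=\gen{1}$ and $\gA_1=\gA'$.

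Finally, since $\gA_1=\gA'$ and $\gA'$ is a \tf $\gA$-module, the iterative seminormal construction of $\gA_1$ from $\gA$ terminates after finitely many steps, producing a finite sequence $c_1,\ldots,c_m\in\gB$ with $c_i^2,c_i^3\in\gA[c_1,\ldots,c_{i-1}]$ and $\gA[c_1,\ldots,c_m]=\gA'$; hence $f_i,g_j\in\gA[c_1,\ldots,c_m][\uX]$ and $M$ is free over $\gA[c_1,\ldots,c_m][\uX]$. The main point requiring care is the verification that the seminormal closure actually exhausts $\gA'$; this is precisely what the deciphered \cov argument of Theorem \ref{thTSC}, applied to $\gA_1\subseteq\gA'$ with Lemma \ref{lemIntSemin1bis} in place of Lemma \ref{lemIntSemin1}, delivers.
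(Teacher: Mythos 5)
Your proof is correct and follows exactly the route the paper intends: the paper establishes Theorem \ref{thTSCBis} only by the remark that one substitutes Lemma \ref{lemIntSemin1bis} for Lemma \ref{lemIntSemin1} in the proof of Theorem \ref{thTSC}, and your argument carries out precisely that substitution, replacing $\gA$ by its seminormal closure $\gA_1$ inside the finite extension $\gA'$ generated by the Kronecker coefficients and extracting the $c_i$ from the finitely many adjunction steps that witness $\gA_1=\gA'$. The implicit reduction to the case $P(0)=\I_{n,1}$ is the same one the paper makes via Lemma \ref{lemPicPic1}, so nothing is missing relative to the paper's own level of detail.
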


\setcounter{section}{5}
\setcounter{theorem}{0}
\subsection*{E. Gcd domains} 
\addcontentsline{toc}{subsection}{E. Gcd domains}

In this section we give a detailed proof of \Tho \ref{thZedLib}
without using the local-global \elr machinery page~\pageref{mlge}.

\begin{definition} 
\label{defqi} A ring $\gA$ is called \emph{a pp-ring} if the
annihilator of each \elt  
is (a principal ideal generated by an) \idmz.
For $a\in\gA$, we denote  $e_a$  the \idm such that  $\Ann(a)=\gen{1-
e_a}$. So $a$  is regular in $\gA[1/e_a]$ and null in
 $\gA[1/(1-e_a)]$.
\end{definition}

An integral ring is exactly a connected pp-ring.

\begin{lemma} \label{lemQI}
Let $x_1,\dots,x_n$ be \elts of a commutative ring. If 
one has
$\Ann(x_i) = \gen{r_i}$ where $r_i$'s are  idempotent  ($1 \leq i
\leq n$), let $s_i=1-r_i$,  $t_1=s_1$, $t_2=r_1s_2$, $t_3=r_1r_2s_3 ,\dots$,
$t_{n+1}=r_1r_2\cdots r_n$.
Then  $t_1,\dots,t_{n+1}$ is a \sfio
and the \elt $x=x_1+t_2x_2+\cdots +t_nx_n$ satisfies
$\Ann(x_1,\dots,x_n) = \Ann(x) = \gen{t_{n+1}}$.
\end{lemma}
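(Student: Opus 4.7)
The plan is to verify first that $t_1,\ldots,t_{n+1}$ is a sfio, then compute both $\Ann(x_1,\ldots,x_n)$ and $\Ann(x)$ and check they coincide with $\langle t_{n+1}\rangle$. For the sfio assertion, each $t_i$ is a product of complementary idempotents, hence idempotent. Two distinct $t_i$'s are orthogonal because at the first index $j$ where they differ, one contains the factor $r_j$ and the other contains $s_j=1-r_j$, and $r_js_j=0$. The identity $\sum_{i=1}^{n+1}t_i=1$ is a telescoping computation: one writes it as $s_1+r_1(s_2+r_2(s_3+\cdots +r_{n-1}(s_n+r_n)\cdots))$ and each inner $s_k+r_k$ collapses to $1$.

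Next I would compute $\Ann(x_1,\ldots,x_n)=\bigcap_i\Ann(x_i)=\bigcap_i\langle r_i\rangle$. The key fact is that for an idempotent $e$, membership $z\in\langle e\rangle$ is equivalent to $z=ze$. Hence for two idempotents $e,f$ one has $\langle e\rangle\cap\langle f\rangle=\langle ef\rangle$; iterating, $\bigcap_i\langle r_i\rangle=\langle r_1\cdots r_n\rangle=\langle t_{n+1}\rangle$.

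For $\Ann(x)$ I would first rewrite $x$ in a more symmetric form. Because $r_ix_i=0$ gives $s_ix_i=x_i$, and in particular $x_1=s_1x_1=t_1x_1$, so
\[
x=t_1x_1+t_2x_2+\cdots +t_nx_n .
\]
By orthogonality of the $t_i$'s, $t_jx=t_j^2x_j=t_jx_j$ for $j\leq n$ and $t_{n+1}x=0$. Moreover $t_jx_j=r_1\cdots r_{j-1}s_jx_j=r_1\cdots r_{j-1}x_j$. The inclusion $\langle t_{n+1}\rangle\subseteq\Ann(x)$ is then immediate from $t_{n+1}x=0$. For the reverse inclusion, suppose $yx=0$. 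Multiplying by $t_j$ gives $y\,r_1\cdots r_{j-1}x_j=0$, so $y\,r_1\cdots r_{j-1}\in\Ann(x_j)=\langle r_j\rangle$, equivalently $y\,r_1\cdots r_{j-1}=y\,r_1\cdots r_{j-1}r_j$. Running this for $j=1,2,\ldots ,n$ in succession yields $y=yr_1=yr_1r_2=\cdots =y\,r_1\cdots r_n=yt_{n+1}$, so $y\in\langle t_{n+1}\rangle$.

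There is no serious obstacle; the only piece of bookkeeping that needs care is the rewriting $x=\sum_{i=1}^n t_ix_i$ (using $s_ix_i=x_i$), since that is what makes the orthogonality of the $t_i$'s effective and reduces $t_jx$ to a single term. Once this is in place, the equivalence $z\in\langle r_i\rangle\Leftrightarrow zr_i=z$ provides both the intersection formula for $\Ann(x_1,\ldots,x_n)$ and the inductive step $y\,r_1\cdots r_{j-1}\mapsto y\,r_1\cdots r_j$ that drives the proof of $\Ann(x)\subseteq\langle t_{n+1}\rangle$.
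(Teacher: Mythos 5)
Your proof is correct and complete. Note that the paper states Lemma \ref{lemQI} without any proof (it passes directly to Corollary \ref{corlemQI2}), so there is no argument in the text to compare against; your verification — the telescoping sum for the \sfio claim, the identity $\gen{e}\cap\gen{f}=\gen{ef}$ for idempotents, the rewriting $x=\sum_{i\le n}t_ix_i$ via $s_ix_i=x_i$, and the inductive step $y\,r_1\cdots r_{j-1}=y\,r_1\cdots r_j$ — supplies exactly the routine bookkeeping the authors left to the reader.
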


\begin{corollary} 
\label{corlemQI2} 
Let $\gA$ be a pp-ring and $P=(m_{ij})_{1\leq i,j\leq n}$  a square matrix  such that  
$\Tr(P)$ is regular. Then  there exists a matrix $J\in\gA^{n\times n}$  such that  $J^2=\In$ and $JPJ=JPJ^{-1}$ has a regular \coe in 
position $(1,1)$.
\end{corollary}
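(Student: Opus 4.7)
The idea is to use the pp-ring hypothesis to split $\gA$ into finitely many components on which one of the diagonal entries $m_{ii}$ is regular, then patch together, for each component, a transposition that sends that entry to position $(1,1)$.

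First I would apply the pp-ring hypothesis to each diagonal entry: for $i=1,\dots,n$ there is an idempotent $r_i$ with $\Ann(m_{ii})=\gen{r_i}$. Then Lemma~E.2 applied to $x_i=m_{ii}$ produces a \sfio $t_1,\dots,t_{n+1}$ and the \egt $\Ann(m_{11},\dots,m_{nn})=\gen{t_{n+1}}$. Since $\Tr(P)=m_{11}+\cdots+m_{nn}$ is regular, any element of $\gA$ that annihilates every $m_{ii}$ also annihilates $\Tr(P)$ and is thus zero; hence $t_{n+1}=0$, and we get an \sfio $t_1,\dots,t_n$ with $t_1+\cdots+t_n=1$. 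Inspecting the construction of $t_i$ in the lemma, $t_i$ has $s_i=1-r_i$ as a factor, so on the component $t_i\gA$ the idempotent $r_i$ vanishes, which is precisely the statement that $m_{ii}$ becomes regular on $\gA[1/t_i]$.

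Next I would define the matrix $J$. Let $J_1=\In$ and, for $i\geq 2$, let $J_i$ denote the permutation matrix exchanging rows (and columns) $1$ and $i$; each $J_i$ is self-inverse and $J_iPJ_i$ has $m_{ii}$ in position $(1,1)$. Set
\[
J \;=\; t_1J_1+t_2J_2+\cdots+t_nJ_n.
\]
Using $t_it_j=0$ for $i\neq j$ and $t_i^2=t_i$, one gets immediately $J^2=\sum_i t_iJ_i^2=\sum_i t_i\In=\In$, so $J^{-1}=J$. Similarly
\[
JPJ \;=\; \sum_{i,j} t_it_j\,J_iPJ_j \;=\; \sum_i t_i\,J_iPJ_i,
\]
whose $(1,1)$-entry is $m:=\sum_i t_im_{ii}$.

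Finally I would check that $m$ is regular. If $y\in\gA$ satisfies $ym=0$, multiplying by $t_j$ and using orthogonality gives $y\,t_jm_{jj}=0$; since $m_{jj}$ is regular on the component $t_j\gA\simeq\gA[1/t_j]$, this forces $yt_j=0$ for every $j$, whence $y=y\sum_j t_j=0$. Thus $m$ is regular, and $JPJ$ has a regular entry in position $(1,1)$, as required.

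The only step that requires any genuine thought is recognising that the \sfio supplied by Lemma~E.2 is exactly what is needed to \emph{glue} the $n$ transpositions $J_1,\dots,J_n$ into a single global involution $J$; everything else is a routine verification using orthogonality of the $t_i$'s.
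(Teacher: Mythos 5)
Your proof is correct and follows essentially the same route as the paper: apply Lemma \ref{lemQI} to the diagonal entries, deduce $t_{n+1}=0$ from the regularity of $\Tr(P)$, and glue the transpositions via $J=\sum_i t_iJ_i$. The only (harmless) difference is that you re-verify the regularity of the $(1,1)$-entry componentwise, whereas the paper simply reads it off from the conclusion $\Ann(x)=\gen{t_{n+1}}=0$ of Lemma \ref{lemQI}.
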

\begin{proof}
We apply Lemma \ref{lemQI} with the \elts $x_i=m_{i,i}$. We have $t_{n+1}=0$ because $t_{n+1}\Tr(P)=0$.
Thus $(t_1,\ldots ,t_n)$ is a \sfioz.
Let $J_k$ be the  permutation matrix exchanging vectors $1$ and $k$
in the canonical basis. Let 
$J=t_1\In+t_2J_2+\cdots+ t_nJ_n$. We have $J^2=\In$ and  the \coe in position 
$(1,1)$ of  $JPJ$ is equal to $x=t_1x_1+t_2x_2+\cdots 
+t_nx_n=x_1+t_2x_2+\cdots +t_nx_n$, thus it is regular.
\end{proof}

A \zed reduced ring is a pp-ring and if $\gA$ is a pp-ring,
then the  total fraction  ring of $\gA$, denoted by $\Frac(\gA),$ is a \zed reduced ring: for all $a$, 
$\wi{a}=(1-e_a)+a$ is regular and $a/\wi{a}=a\bl$ is a quasi inverse 
of $a$ in $\Frac(\gA)$. Moreover, for all $a\in\gA$, 
$\gA[1/a]$ is a pp-ring  and $\Frac(\gA[1/a])$ can be indentified
with $e_a\Frac(\gA)\simeq \Frac(\gA)[1/a]$. 

Finally, if $\gA$ is a pp-ring
then $\AX $ is a pp-ring and the annihilator of a \pol $f$ is
generated by the \idm equal to the product of annihilators of the \coesz.

\smallskip In  a pp-ring  if $a$ divides $b$ and $b$ divides $a$,
one has $e_a=e_b$ and $ua=b$  with an  invertible \elt $u$.
This allows to develop a theory of gcd pp-rings analogous to
the theory of gcd domains.

\begin{definition} 
\label{defMongcd}  A commutative regular monoid is called 
a \emph{gcd monoid} if any two \elts do have a greatest common divisor. 
If $g$ is a gcd for $a$ and $b$ we write  $g=\gcd(a,b)$
(in fact a gcd is defined up to a unit).
\end{definition}

\begin{lemma} 
\label{lemQigcd} Let  $\gA$ be a pp-ring. \Propeq 
\begin{enumerate}
\item The monoid of regular \elts is a gcd monoid.
\item For any \idm $e$  regular \elts of $\gA[1/e]$ 
give a gcd monoid.
\item Two arbitrary \elts have a gcd.
\end{enumerate}
In this case we say that $\gA$ is  a \emph{gcd pp-ring}. 
\end{lemma}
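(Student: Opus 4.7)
\medskip\noindent\textbf{Proof plan for Lemma \ref{lemQigcd}.}
The plan is to close the cycle $1\Rightarrow 2\Rightarrow 3\Rightarrow 1$. The implication $3\Rightarrow 1$ is immediate (the gcd of two regular elements is the gcd of two elements), and $2\Rightarrow 1$ follows just by taking $e=1$, so the content is in $1\Rightarrow 2$ and $2\Rightarrow 3$. The common tool is that for every \idm $e$ of $\gA$ one has a splitting $\gA\simeq e\gA\times (1-e)\gA$ with $e\gA\simeq\gA[1/e]$; divisibility, regularity and gcd all behave well with respect to this product.

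For $1\Rightarrow 2$, fix an idempotent $e$ and take two regular elements $x,y$ in $\gA[1/e]\simeq e\gA$. Lift them to $\wi{x}=x+(1-e)$ and $\wi{y}=y+(1-e)$; these are regular in $\gA$, since if $z\wi{x}=0$ then projecting onto $(1-e)\gA$ gives $(1-e)z=0$, hence $z=ez$, and then $0=e z\wi{x}=zx$ inside $e\gA$ forces $ez=0$, so $z=0$. Hypothesis~1 yields $\wi{g}=\gcd(\wi{x},\wi{y})$ in $\gA$; I claim $e\wi{g}$ is a gcd of $x,y$ in $e\gA$. It is regular in $e\gA$, it clearly divides $x=e\wi{x}$ and $y=e\wi{y}$, and if $d\in e\gA$ divides $x$ and $y$ in $e\gA$, writing $x=dc_1$ (with $c_1\in e\gA$) and setting $c=c_1+(1-e)$ one checks $(d+(1-e))\cdot c=\wi{x}$, so $d+(1-e)\mid \wi{x}$ and similarly for $\wi{y}$. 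Thus $d+(1-e)\mid \wi{g}$ in $\gA$, and projecting on $e\gA$ yields $d\mid e\wi{g}$.

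For $2\Rightarrow 3$, given $a,b\in\gA$ use the four orthogonal idempotents
\[
f_1=e_ae_b,\quad f_2=e_a(1-e_b),\quad f_3=(1-e_a)e_b,\quad f_4=(1-e_a)(1-e_b),
\]
which form a \sfio and produce a decomposition $\gA\simeq\prod_{i=1}^4 f_i\gA$ with $f_i\gA\simeq\gA[1/f_i]$. In $f_1\gA$ both $a$ and $b$ are regular, so hypothesis~2 gives a gcd $g_1$ there; in $f_2\gA$ we have $b=0$, so $a$ itself is a gcd of $a,b$; symmetrically $b$ works in $f_3\gA$; in $f_4\gA$ both are zero, so $0$ is a gcd. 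Assembling, $g:=f_1 g_1+f_2 a+f_3 b$ is a gcd of $a,b$ in $\gA$: common divisibility and the universal property both reduce to the corresponding statements in each factor $f_i\gA$, where they hold by construction.

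The one delicate point, and the main obstacle, is precisely the lifting step in $1\Rightarrow 2$: verifying that a divisor $d$ in $e\gA$ produces an honest divisor $d+(1-e)$ in $\gA$ requires the component-wise computation above. Once that is in place, everything else is bookkeeping with \idms and the product decomposition.
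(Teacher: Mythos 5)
Your proof is correct and uses essentially the same approach as the paper: the paper's (very terse) proof gives only the implication $1\Rightarrow 2$, via exactly your lift $\wi{a}=(1-e_a)+a$ and reading the gcd back in $\gA[1/e]$, leaving the rest implicit. Your completion of the cycle — splitting $\gA$ along the \sfio generated by $e_a,e_b$ for $2\Rightarrow 3$ — is the same idempotent bookkeeping the paper uses in its proof of Lemma~\ref{lemGCD1}, so nothing here diverges from the authors' intent.
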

%
\begin{proof}
For example,  to show that 1. implies 2., one introduces, for $a\in e\gA$ 
with $a$ regular in $\gA[1/e]$, the \elt $\wi{a}=(1-e_a)+a$ 
which is regular in $\gA$. If $g$ is the gcd of  $\wi{a}$ and  $\wi{c}$ 
in $\gA$, the same \elt $g$, viewed in $\gA[1/e]$, is the gcd of $a$ 
and $c$.
\end{proof}

A  gcd pp-ring which is connected is a gcd domain.
A \zed reduced ring is a gcd pp-ring.

Let  $\gA$ be a gcd pp-ring and  $f(X)=\sum_{k=0}^nf_kX^k\in\gA[X]$, we 
denote by $\rG(f)$ the gcd (defined up to a unit) of the \coes of 
$f$. If $\rG(f)=1$ one says that $f$ is primitive{\footnote{~Warning. This conflicts  another   traditional terminology: $f$ is primitive 
when the ideal of \coes of $f$ contains~$1$.}}.

\medskip 

\medskip We have to see that arguments in the proof of Lemma
\ref{lemPicGcd} work also for gcd pp-rings. 
In particular, \emph{if $\gA$ is a gcd pp-ring, so is $\AX$}. 
So for any \zed reduced ring  $\gA$, the ring  $\AuX$ 
is a gcd pp-ring and thus any \mrcz~1 over $\AuX$ is free.

Let us see the first argument in the proof:
\emph{Let $P=(m_{i,j})$ be an \idm matrix of rank~1. 
Since $\sum_i m_{i,i}=1$ we can assume that $m_{1,1}$ is 
regular.}
Corollary \ref{corlemQI2} gives the answer.

\smallskip 
For the end of the proof we look at the
\gui{bible} \cite{MRR}, where all proofs are algorithmic 
(and often very simple).

\begin{lemma} 
\label{lemGCD1} \emph{(cf. \Tho 1.1 page 108 in \cite{MRR})}\\
Let $a,b,c$ be \elts of a gcd pp-ring. Then 
\begin{enumerate}
\item $\gcd(\gcd(a,b),c)=\gcd(a,\gcd(b,c))$.
\item $c\cdot \gcd(a,b)=\gcd(ca,cb)$.
\item If $x=\gcd(a,b)$, then $\gcd(a,bc)=\gcd(a,xc)$.
\item If $a|bc$ and $\gcd(a,b)=e_b$ then $a|e_bc$. 
\end{enumerate}
\end{lemma}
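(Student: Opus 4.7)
The plan is to follow the classical gcd-domain proofs of \cite{MRR} and verify that each argument survives the passage to gcd pp-rings; the key point is that in a pp-ring, every element $x$ induces a product decomposition $\gA \simeq \gA[1/e_x] \times \gA[1/(1-e_x)]$ in which $x$ is regular on the first factor and zero on the second. Since gcds commute with projection to direct factors and since, by Lemma~\ref{lemQigcd}, each factor is again a gcd pp-ring whose regular elements form a gcd monoid, any identity may be checked componentwise, and the regular component behaves like a gcd domain. Throughout I would use the \emph{universal property} characterising $\gcd(a,b)$: it is a common divisor of $a$ and $b$ that is divisible by every common divisor (uniquely up to a unit).

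For (1) I would argue directly from the universal property: a common divisor of $\gcd(a,b)$ and $c$ is precisely a common divisor of $a$, $b$, $c$, and the same is true of a common divisor of $a$ and $\gcd(b,c)$; hence both iterated gcds represent the gcd of the triple $\{a,b,c\}$ and agree up to a unit. For (2) I would first prove the identity assuming $c$ is regular by imitating the \cite{MRR} argument: $c\gcd(a,b)$ is plainly a common divisor of $ca,cb$, and conversely if $d$ divides $ca$ and $cb$ then $d\gcd(a,b) = \gcd(da, db)$ shows, after dividing by $c$ (legitimate because $c$ is regular), that $d$ divides $c\gcd(a,b)$. To remove regularity I would decompose the ring via $e_c$: in $\gA[1/e_c]$ the element $c$ is regular and the regular case applies, while in $\gA[1/(1-e_c)]$ both sides vanish. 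For (3) I would chain (1) and (2): by (2), $\gcd(a, bc) = \gcd(a, b\cdot\gcd(a,c)/\gcd(a,c) \cdot c)$ is not quite right — instead, noting $x \mid b$ so $xc \mid bc$, one checks that $\gcd(a, xc)$ divides $\gcd(a, bc)$ and conversely that any common divisor of $a$ and $bc$ divides $\gcd(a, xc)$, using (2) to pull out the factor of $c$ and the definition of $x = \gcd(a,b)$; this is essentially the short argument of \cite{MRR}.

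The most delicate item is (4), because it mentions the idempotent $e_b$, which is specific to the pp-ring structure. Here I would localise at $e_b$ and at $1-e_b$ separately. On $\gA[1/(1-e_b)]$, we have $b = 0$ and $e_b = 0$, so the conclusion $a \mid e_b c$ is the trivial statement $a \mid 0$. On $\gA[1/e_b]$, the element $b$ is regular (by the definition of $e_b$), the hypothesis $\gcd(a,b) = e_b$ becomes $\gcd(a,b) = 1$, and $a \mid bc$ persists. The classical Euclid-type lemma for gcd domains — "$\gcd(a,b) = 1$ and $a \mid bc$ imply $a \mid c$", itself a direct consequence of (2) via $a \mid \gcd(ac, bc) = c\gcd(a,b) = c$ — then gives $a \mid c$ in $\gA[1/e_b]$. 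Translating back to $\gA$, this is precisely $a \mid e_b c$, and gluing with the trivial statement on the complementary factor yields the result.

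The main obstacle, as I see it, is a purely bookkeeping one: making sure that divisibility and gcd interact correctly with the localisation at idempotents, and that when a statement is verified separately on $\gA[1/e]$ and $\gA[1/(1-e)]$ one actually obtains the statement in $\gA$ (this is straightforward because $\gA \hookrightarrow \gA[1/e] \times \gA[1/(1-e)]$ is an isomorphism when $e$ is idempotent). Once this principle is used, each identity collapses to its gcd-domain analogue, and the proofs of \cite{MRR} apply verbatim on the regular component while the other component is trivially satisfied.
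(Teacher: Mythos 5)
Your proof is correct and follows essentially the same route as the paper: decompose the ring along the idempotents $e_a$, $e_b$, $e_c$ so that each element becomes null or regular on each component, observe that gcd and divisibility are computed componentwise, and invoke the gcd-monoid arguments of \cite{MRR} on the regular component while the null components are trivial. The only (immaterial) difference is that you choose the relevant idempotent item by item, whereas the paper splits once and for all over the \sfio generated by all three idempotents.
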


%
\begin{proof}
If one of the 3 \elts $a,b,c$ is null, all is clear. 
In the general case let $r_i$  be an \elt of a \sfio generated by $e_a$, $e_b$ and $e_c$. Each \elt $a,b,c$ 
is null or regular in $\gA[1/r_i]$. The proof given in \cite{MRR} for
gcd monoids  works for the component in which  $a,b,c$ are regular. 
\end{proof}

 A consequence of Item \emph{2} in Lemma \ref{lemGCD1} is
that in a gcd pp-ring, a primitive \pol  is a regular \elt 
of~$\AX $. 

\begin{lemma} 
\label{lemPrimFact} \emph{(Lemma 4.2 page 123 in \cite{MRR})}
Let $\gA$ be a gcd pp-ring, $\gK=\Frac(\gA)$ and  
$f\in\gK[X]$. We can find  a primitive \pol $g\in\AX $ and
$c\in\gK$ such that  $f=cg$. If we have  another   decomposition $f=c'g'$
then there exists $u\in\gA^\times$ such that~$c=uc'$.
\end{lemma}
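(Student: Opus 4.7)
The plan is to mimic the classical proof over gcd domains (Lemma 4.2 in \cite{MRR}), extending it to the gcd pp-ring setting via a Gauss-type lemma and careful handling of idempotents.

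For existence, given $f \in \gK[X]$, I would first clear denominators: since $\gK = \Frac(\gA)$, each coefficient of $f$ can be written with a regular denominator in $\gA$, so multiplying by the product of these denominators yields $df = h \in \AX$ for some regular $d \in \gA$. As $\AX$ is a gcd pp-ring (the paper states that pp-rings are preserved under polynomial extension and lists gcd properties), let $g_0 = \rG(h) \in \gA$ be a gcd of the coefficients of $h$, and factor $h = g_0\,g$ in $\AX$. The factor $g$ can be chosen primitive: on the component $e_{g_0}\gA$, the element $g_0$ is regular and the standard factorization makes $g$ primitive there; on the complementary component $\gA/\langle e_{g_0}\rangle$ we have $g_0 = 0$ and $h = 0$, so one takes $g = 1$ there. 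Then $c = g_0/d \in \gK$ realizes $f = cg$.

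The central ingredient is a Gauss-type lemma for gcd pp-rings: for $c \in \gA$ and $g \in \AX$ primitive, $\rG(cg) = c$ up to a unit in $\gA$. I would prove this by the local-global argument already used in the proof of Lemma \ref{lemGCD1}: the finitely many elements involved give, via the pp-ring structure, a \sfio of $\gA$ on each of whose components each relevant element is either zero or regular; on each such component $\gA$ behaves as a gcd domain and the classical Gauss's lemma applies, and the result reassembles across the components.

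For uniqueness, suppose $f = cg = c'g'$ with $g, g'$ primitive in $\AX$. I would pick a regular $B \in \gA$ with $Bc,\,Bc' \in \gA$ (clearing the denominators of $c$ and $c'$ simultaneously). Then in $\AX$ we have the identity $(Bc)g = (Bc')g'$. Applying Gauss's lemma to both sides gives $\rG((Bc)g) = Bc$ and $\rG((Bc')g') = Bc'$, each up to a unit in $\gA$; since gcds are unique up to units, $Bc = u\,Bc'$ for some $u \in \gA^\times$. Cancelling the regular $B$ yields $c = uc'$, as required.

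The main obstacle is the Gauss-type lemma of the second step. In a domain, "primitive" cleanly means a unit gcd, whereas in a pp-ring the gcd may be a non-unit idempotent, and the naive identity $\rG(pq) = \rG(p)\rG(q)$ requires care when $p$ or $q$ has a non-trivial annihilator. The cleanest way around this is exactly the idempotent-decomposition technique already exhibited in Lemma \ref{lemGCD1} and its proof, reducing verification to the connected (domain) case where \cite{MRR} applies directly.
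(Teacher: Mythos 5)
Your proposal is correct and follows essentially the same route as the paper, which simply clears denominators, factors out $\rG$, and splits the ring on the idempotent $e_{\rG(f)}$ so that on one component $f=0$ (take $g=1$, $c=0$) and on the other $\rG(f)$ is regular and the \cite{MRR} argument applies with \gui{$a\neq 0$} replaced by \gui{$a$ regular}. Note that your \gui{Gauss-type lemma} $\rG(cg)=c\,\rG(g)$ for a constant $c$ is just the distributivity already established in Lemma~\ref{lemGCD1} (Item \emph{2}), so no separate local-global argument is needed and there is no circularity with the later Gauss Lemma~\ref{lemGauss}.
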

%
\begin{proof}
If $f=0$ we take $g=1$ and $c=0$. 
If $\rG(f)$ is regular, the proof in \cite{MRR} works,
replacing \hbox{``$a \neq 0$''} by ``$a$ is regular''.
Thus we decompose  the ring  in two components by using the \idmz~$e_{\rG(f)}$. 
\end{proof}
%

\begin{lemma} 
\label{lemGauss} \emph{(Gauss Lemma, Lemma 4.3 page 123 in 
\cite{MRR})}\\
Let $\gA$ be a gcd pp-ring and $f,g\in\AX $. Then 
$\rG(f)\rG(g)=\rG(fg)$.
\end{lemma}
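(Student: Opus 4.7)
The plan is to reduce the equation to the statement that a product of primitive polynomials is primitive. Since $\gA$ is a pp-ring, the idempotents $e_{\rG(f)}$ and $e_{\rG(g)}$ split $\gA$ into up to four components; in any component where $\rG(f)$ (respectively $\rG(g)$) is annihilated, the polynomial $f$ (respectively $g$) itself vanishes along with $fg$, and the Gauss identity $\rG(fg) = \rG(f)\rG(g)$ is trivial there. I may therefore assume that both $\rG(f)$ and $\rG(g)$ are regular in $\gA$.

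Under this assumption Lemma \ref{lemPrimFact} (applied inside $\AX \subseteq \Frac(\gA)[X]$) lets me write $f = \rG(f)\,f_1$ and $g = \rG(g)\,g_1$ with $f_1,g_1$ primitive in $\AX $. From $fg = \rG(f)\rG(g)\,f_1 g_1$ and Item~2 of Lemma \ref{lemGCD1}, applied coefficient-wise to pull the scalar $\rG(f)\rG(g)$ out of the gcd of coefficients, I obtain
\[
\rG(fg) \;=\; \rG(f)\,\rG(g)\cdot \rG(f_1 g_1),
\]
so the lemma reduces to proving $\rG(f_1 g_1) = 1$ whenever $f_1, g_1 \in \AX $ are primitive.

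For this primitive case I would follow the argument given in MRR (Lemma~4.3, p.\,123) for gcd monoids. Suppose towards a contradiction that some non-unit $d$ divides every coefficient of $f_1 g_1$. Locate the smallest index $i_0$ (resp.\ $j_0$) such that $d$ fails to divide the coefficient $a_{i_0}$ of $f_1$ (resp.\ $b_{j_0}$ of $g_1$); primitivity of $f_1$ and $g_1$ guarantees their existence. Reading off the coefficient of $X^{i_0 + j_0}$ in $f_1 g_1$, all cross terms other than $a_{i_0} b_{j_0}$ are divisible by $d$ by the minimality of $i_0, j_0$, so $d \mid a_{i_0} b_{j_0}$. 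Applying Item~4 of Lemma \ref{lemGCD1} (with $a = d$, $b = a_{i_0}$, and the idempotent $e_b$ adjusted appropriately), together with the symmetric statement for $b_{j_0}$, forces $d$ to be a unit, contradicting the choice of $d$.

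The main obstacle is phrasing \emph{``$d$ does not divide $a_{i_0}$''} in a pp-ring rather than in a domain, because there is no clean ``$d \mid a$ or not'' dichotomy. The remedy is to open an extra idempotent case using $e_{\gcd(d,a_{i_0})}$-type elements, via Lemma \ref{lemQI} and Corollary \ref{corlemQI2}, so that inside each resulting component the argument reduces to the familiar gcd-domain situation. Keeping this idempotent bookkeeping straight is the delicate part, but it uses no tools beyond those assembled earlier in Annex~A and Annex~E.
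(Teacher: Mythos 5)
Your reduction to the primitive case (splitting off the contents via Lemma \ref{lemPrimFact} and Item \emph{2} of Lemma \ref{lemGCD1}) is sound and is exactly how the paper begins. The gap is in the primitive case. The minimal-index argument you invoke --- pick a non-unit $d$ dividing every coefficient of $f_1g_1$, locate the first coefficients $a_{i_0}$, $b_{j_0}$ not divisible by $d$, deduce $d\mid a_{i_0}b_{j_0}$ and conclude --- does not work in a gcd domain, let alone a gcd pp-ring, because the final step needs $d$ to be \emph{prime}. Item \emph{4} of Lemma \ref{lemGCD1} only lets you cancel $a_{i_0}$ from $d\mid a_{i_0}b_{j_0}$ when $\gcd(d,a_{i_0})$ is trivial, and ``$d$ does not divide $a_{i_0}$'' gives you no control on $\gcd(d,a_{i_0})$. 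In a gcd domain one cannot replace $d$ by an irreducible factor, so the contradiction never materializes. Moreover, your proposed remedy for the missing dichotomy ``$d\mid a$ or not'' cannot be implemented with the tools of Annexes A and E: the idempotents $e_x$ realize the disjunction ``$x=0$ or $x$ regular'' (an annihilator condition), not a divisibility test, and $e_{\gcd(d,a_{i_0})}$ only tells you whether $\gcd(d,a_{i_0})$ is zero or regular, which is irrelevant here.

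The paper avoids both obstacles by following MRR's induction on $\deg(f)+\deg(g)$. One first refines your splitting: take the \sfio generated by $e_c$ for \emph{all} coefficients $c$ of $f$ and $g$, so that in each component both polynomials have a well-defined degree with regular leading coefficient (your coarser splitting by $e_{\rG(f)},e_{\rG(g)}$ is not enough for this). Then, with $f,g$ primitive, set $c=\rG(fg)$ and $d=\gcd(f_n,c)$; since $d$ divides $(f-f_nX^n)g$, the induction hypothesis gives $d\mid\rG(f-f_nX^n)$, hence $d\mid f$ and $d=1$; similarly $\gcd(g_m,c)=1$, and since $c\mid f_ng_m$, Item \emph{4} now applies legitimately to give $c=1$. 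No divisibility test and no prime element is ever needed. You should replace your third step by this induction (and refine the initial \sfio accordingly).
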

%
\begin{proof}
Let $(r_i)$ be the  \sfio generated by $e_c$'s for all  
\coesz~$c$  of $f$ and $g$. In  each ring  $\gA[1/r_i]$ 
\pols $f$ and $g$ have a well defined degree{\footnote{~Precisely we know an integer $q\geq 0$ such that  the 
\coe of degree $q$ is leading  and regular. Note there is no need to assume we know if the ring is trivial or not.}}. Let us see that the elegant 
proof by induction on $n+m=\deg(f)+\deg(g)$ given in \cite{MRR} 
works.

\noindent We reason by induction on $m+n$.
By distributivity (Item \emph{2} in Lemma \ref{lemGCD1}) and using Lemma~\ref{lemPrimFact}, we are reduced to the case where $\rG(f)=\rG(g)=1$. 
Let~$c=\rG(fg)$ and~$d=\gcd(f_n,c)$. 
Then~$d$ divides $(f-f_nX^n)\,g$. If $f=f_nX^n$ the result is clear. 
In the other case, by \hdrz~$d$ divides $\rG(f-f_nX^n)\,\rG(g)=\rG(f-f_nX^n)$, thus $d$ divides $f$ and $d=1$. 
So $\gcd(f_n,c)=1$. 
Similarly  $\gcd(g_m,c)=1$ and since $c$ divides $f_ng_m$, $c=1$. 
\end{proof}

Finally proofs in \cite{MRR} for the two following results do work
in our new context.
\begin{corollary} 
\label{corlemGauss} \emph{(Corollary 4.4 page 123 in \cite{MRR})}\\
Let $\gA$ be a gcd pp-ring,  $f,g\in\AX $ and 
$\gK=\Frac(\gA)$. Then  $f$ divides $g$ in $\AX $ \ssi
 $f$ divides $g$ in $\gK[X]$ and $\rG(f)$ divides $\rG(g)$.
\end{corollary}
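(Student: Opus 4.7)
The forward direction is immediate: if $f$ divides $g$ in $\gA[X]$ with $g=fh$ and $h\in\gA[X]$, then of course $f$ divides $g$ in $\gK[X]$, and Gauss's lemma \ref{lemGauss} gives $\rG(g)=\rG(f)\rG(h)$, so $\rG(f)\mid\rG(g)$.

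For the converse, the strategy is the same as in the gcd domain case treated in \cite{MRR}, but I must first handle the zero-divisors inherent to a pp-ring by decomposing along an idempotent. Let $e=e_{\rG(f)}$, so $\gA=\gA[1/e]\times\gA[1/(1-e)]$. In the component $\gA[1/(1-e)]$, $\rG(f)=0$, hence $f=0$; the hypothesis $\rG(f)\mid\rG(g)$ forces $\rG(g)=0$ and thus $g=0$, so the divisibility holds trivially there. It remains to treat the component $\gA[1/e]$, in which $\rG(f)$ is regular; this is the heart of the argument.

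So assume henceforth $\rG(f)$ is regular. Then $f$ itself is a regular element of $\gA[X]$ (for a primitive polynomial over a gcd pp-ring is regular, by Item~2 of Lemma \ref{lemGCD1} applied after factoring out $\rG(f)$). By hypothesis $g=fh$ for some $h\in\gK[X]$. Applying Lemma \ref{lemPrimFact} to $h$, write $h=c\,h_{0}$ with $h_{0}\in\gA[X]$ primitive and $c\in\gK$. Then $fh_{0}\in\gA[X]$ is the product of two primitive polynomials of $\gA[X]$, so by Gauss (Lemma \ref{lemGauss}) it is again primitive. Thus $g=c(fh_{0})$ is a decomposition of $g$ into $c\in\gK$ times a primitive polynomial of $\gA[X]$. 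Applying Lemma \ref{lemPrimFact} a second time to $g\in\gA[X]$ (and using the uniqueness up to a unit of $\gA$), we obtain $c=u\,\rG(g)/\rG(f)$ for some $u\in\gA^{\times}$, up to a unit; more concretely, $\rG(f)\cdot c=\rG(g)$ up to a unit in $\gA$.

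Since $\rG(f)\mid\rG(g)$ in $\gA$ and $\rG(f)$ is regular, $c$ belongs to $\gA$. Consequently $h=c\,h_{0}\in\gA[X]$, which proves that $f$ divides $g$ in $\gA[X]$. The only real obstacle is the bookkeeping of contents in $\gK$ versus $\gA$ when $\gA$ is not a domain; this is precisely what forces the preliminary decomposition along $e_{\rG(f)}$, after which the classical MRR argument goes through verbatim.
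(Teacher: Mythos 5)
Your proof is correct and follows exactly the route the paper prescribes: the paper gives no written proof of this corollary, merely asserting that the argument of \cite{MRR} goes through once the disjunction ``$x=0$ or $x$ regular'' is realised by splitting the ring along the idempotent $e_{\rG(f)}$, which is precisely your preliminary decomposition, followed by the classical content computation in the component where $\rG(f)$ is regular. One small misstatement to repair: $f$ itself need not be primitive, so $fh_0$ is not a product of two primitive polynomials; instead Gauss's lemma gives $\rG(fh_0)=\rG(f)$, so writing $fh_0=\rG(f)\,p$ with $p$ primitive and applying the uniqueness of Lemma \ref{lemPrimFact} to $g=(c\,\rG(f))\,p$ yields exactly the relation $c\,\rG(f)=u\,\rG(g)$ that you go on to use, and the rest of your argument is unaffected.
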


\begin{theorem} 
\label{thGauss} \emph{(\Tho 4.6 page 124 in \cite{MRR})}\\
If $\gA$  is a gcd pp-ring, then so is $\AX$.
\end{theorem}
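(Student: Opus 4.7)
The plan is to verify the two conditions of Definition~\ref{defMongcd}/Lemma~\ref{lemQigcd}(3) for $\AX$: first that $\AX$ is a pp-ring, then that any two polynomials admit a gcd. The pp-ring condition is already granted in the discussion preceding the theorem: if $f=\sum f_k X^k$, then $\Ann(f)=\bigcap_k\Ann(f_k)$ is the principal ideal generated by the product of the idempotents that generate the $\Ann(f_k)$.

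For the gcd of two polynomials $f,g\in\AX$, I would first split off contents via Lemma~\ref{lemPrimFact}: write $f=\rG(f)\,f_1$ and $g=\rG(g)\,g_1$ with $f_1,g_1$ primitive. The gcd $d=\gcd(\rG(f),\rG(g))$ exists in $\gA$, and by distributivity (Lemma~\ref{lemGCD1}(2)) it suffices to produce a gcd $h$ of the primitive pair $f_1,g_1$; Gauss's Lemma~\ref{lemGauss} then forces $h$ to be primitive, and $\gcd(f,g)=d\cdot h$.

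To produce $h$, I pass to $\gK[X]$ with $\gK=\Frac(\gA)$, which is a zero-dimensional reduced ring, and imitate the Euclidean algorithm. Since $\gK$ is not a field, each \emph{invert-the-leading-coefficient} step must be replaced by a decomposition along the idempotent $e_a$ associated to the relevant coefficient $a$: over $e_a\gK$ the element $a$ is invertible and the division step proceeds as in the field case, while over $(1-e_a)\gK$ the leading term vanishes and the degree drops. Gluing the two partial outputs yields an element of $\gK[X]$; clearing denominators via Lemma~\ref{lemPrimFact} produces a primitive $h\in\AX$. That $h=\gcd(f_1,g_1)$ in $\AX$ then follows from Corollary~\ref{corlemGauss}: $h$ divides $f_1,g_1$ in $\gK[X]$ by construction and $\rG(h)=1$ divides $\rG(f_i)=1$; conversely, any common $\AX$-divisor $k$ of $f_1,g_1$ is primitive (since $\rG(k)\mid 1$) and divides $h$ in $\gK[X]$, hence in $\AX$.

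The main technical obstacle is precisely this Euclidean computation in $\gK[X]$ with $\gK$ only zero-dimensional reduced rather than a field: the usual branch \emph{``is $a$ zero or invertible?''} must be simulated by the pp-ring decomposition $\gK\simeq e_a\gK\times(1-e_a)\gK$, and the bookkeeping of all these branchings is exactly what the local-global elementary machinery on page~\pageref{mlge} is designed to formalise. Once this replacement is accepted, the proof given in \cite{MRR} for gcd domains transcribes nearly verbatim, as the text claims.
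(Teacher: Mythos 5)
Your proposal is correct and follows essentially the same route as the paper, which simply observes that the proofs of \cite{MRR} (split off contents via Lemma~\ref{lemPrimFact}, compute a gcd of the primitive parts over $\Frac(\gA)[X]$, and descend with Gauss's Lemma~\ref{lemGauss} and Corollary~\ref{corlemGauss}) carry over once the disjunction \guig $x=0$ or $x$ regular\guid{} is realised by splitting along the idempotent $e_x$ --- exactly the replacement you describe for the Euclidean steps in $\gK[X]$. The only cosmetic point is that the reduction $\gcd(f,g)=d\cdot h$ rests on Corollary~\ref{corlemGauss} (as you use later) rather than on distributivity alone, since $f$ and $g$ carry different contents.
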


\medskip In fact all these verifications are quasi automatic.
Proofs in \cite{MRR}, which are also algorithms, are 
based on the disjunction \gui{$x=0$ 
or $x$ regular} in a gcd integral ring. 
In the case of gcd pp-rings, it is sufficient 
to realise the disjunction by decomposing 
the ring  in two components by using the \idm $e_x$.

\newpage

\rdb


\normalsize
\endgroup
\stopcontents[english]

\clearpage
\newpage
\thispagestyle{empty}


\clearpage
\newpage

\renewcommand\thepage{F\arabic{page}}\renewcommand\theHsection{F\arabic{section}}

\begingroup
\clearpage
\setcounter{page}{1} 
\setcounter{section}{0}
\setcounter{subsection}{0}
\setcounter{equation}{0}

\selectlanguage{french}
\def\frenchproofname{\textsl{Démonstration}}

\FrenchFootnotes



\newtheorem{ftheorem}{Théorème}[section]   
\newtheorem{fthdef}[ftheorem]{Théorème et définition}
\newtheorem{fproposition}[ftheorem]{Proposition}
\newtheorem{flemma}[ftheorem]{Lemme}
\newtheorem{fcorollary}[ftheorem]{Corolaire} 
\newtheorem{fpropdef}[ftheorem]{Proposition et définition}
\newtheorem{fremark}[ftheorem]{Remarque}
\newtheorem{fcomment}[ftheorem]{Commentaire}
\newtheorem{fexample}[ftheorem]{Exemple}
\newtheorem{ffact}[ftheorem]{Fait}
\newtheorem{fdefinition}[ftheorem]{Définition}
\newtheorem{fdefinitions}[ftheorem]{Définitions}
\newtheorem{fconvention}[ftheorem]{Convention}
\newtheorem{fnotation}[ftheorem]{Notation} 
\newtheorem{fnotadefi}[ftheorem]{Notation et définition} 
\newtheorem{fproblem}[ftheorem]{Problème}
\newtheorem{fquestion}[ftheorem]{Question}

\newtheorem{ftheorem1}{Théorème}[section]   
\newtheorem{fproposition1}[ftheorem1]{Proposition}
\newtheorem{flemma1}[ftheorem1]{Lemme}
\newtheorem{fcorollary1}[ftheorem1]{Corollaire} 
\newtheorem{fpropdef1}[ftheorem1]{Proposition et définition}
\newtheorem{fremark1}[ftheorem1]{Remarque}
\newtheorem{ffact1}[ftheorem1]{Fait}
\newtheorem{fdefinition1}[ftheorem1]{Définition}

\newcommand {\junk}[1]{}

\newcommand {\rem}{\noindent \emph{Remarque. }  }
\newcommand {\rems}{\noindent \emph{Remarques. }  }
\newcommand {\comm}{\noindent \emph{Commentaire. }  }
\newcommand\exl{\mni{\it Exemple. }}

\newenvironment{proof}{
\trivlist \item[\hskip \labelsep{\it Démonstration.}]\hskip 0pt\\}
{\hfill \mbox{$\Box$}
\endtrivlist}
\makeatletter

\newenvironment{prooF}{
\trivlist \item[\hskip \labelsep{\it Démonstration.}]\hskip 0pt\\}
{\hfill \mbox{\textsf{Pas mal n'est-ce pas~?}}
\endtrivlist}
\makeatletter

\newenvironment{Proof}[1]{
\trivlist \item[\hskip \labelsep{\it #1}]\hskip 0pt\\}
{\hfill \mbox{$\Box$}
\endtrivlist}
\makeatletter

\newcommand \eop {\hbox{}\nobreak\hfill
\vrule width 1.4mm height 1.4mm depth 0mm \par \goodbreak 
\smallskip}

\newcommand\aec{\texttt{\`A écrire.}}

\DeclareRobustCommand{\guig}{\mbox{{\usefont{U}{lasy}%
{\if b\expandafter\@car\f@series\@nil b\else m\fi}{n}%
\char40\kern-0.20em\char40}~}}
\DeclareRobustCommand{\guid}{\mbox{~\usefont{U}{lasy}%
{\if b\expandafter\@car\f@series\@nil b\else m\fi}{n}%
\char41\kern-0.20em\char41}}
\newcommand\gui[1]{\guig{#1}\guid}

\newcommand \aigu{\mathaccent19}    
\renewcommand \grave{\mathaccent18}    
\newcommand \noi {\noindent}
\newcommand \sms {\smallskip}
\newcommand \sni {\sms\noi}
\newcommand \ms {\medskip}
\newcommand \mni {\ms\noi}
\newcommand \bs {\bigskip}
\newcommand \bni {\bs\noi}
\newcommand \hs {\qquad}
\newcommand \alb {\allowbreak}
\newcommand \ce {\centerline}

\newcommand\sta{^\star}
\newcommand\ista{_\star}
\newcommand \bu {{$\bullet$}}
\newcommand \bl {^\bullet}
\renewcommand \cir {^\circ}
\newcommand\equidef{\buildrel{{\rm def}}\over{\quad\Longleftrightarrow\quad}} 
\newcommand\eqdefi{\buildrel{\rm def}\over {\;=\;}}


\newcommand\mapright[1]{\smash{\mathop{\longrightarrow}\limits^{#1}}} 
\newcommand\maprightto[1]{\smash{\mathop{\longmapsto}\limits^{#1}}} 
\newcommand\mapdown[1]{\downarrow\rlap{$\vcenter{\hbox{$\scriptstyle 
#1$}}$}}
\newcommand\eqdf[1]{\buildrel{#1}\over =}
\newcommand\equivdf[1]{\buildrel{#1}\over \longleftrightarrow}
\newcommand\vers[1]{\buildrel{#1}\over \longrightarrow }
\newcommand\impdef[1]{\buildrel{#1}\over \Longrightarrow} 
\newcommand\tra[1]{{\,^{\rm t}\!#1}}
\newcommand\gen[1]{\left\langle{#1}\right\rangle} 
\newcommand\so[1]{\left\{{#1}\right\}} 
\newcommand \sur[1]{\!\left/#1\right.}
\newcommand \aqo[2]{#1\sur{\gen{#2}}}

\newcommand \snic[1] {\sni\centerline{$#1$}\sms}
\newcommand \snif[3] 
{\vspace{#1}\noindent\centerline{$#3$}\vspace{#2}}

\newcommand \ov[1] {\overline{#1}}

\newcommand \wh[1] {\widehat{#1} }
\newcommand \wi[1] {\widetilde{#1} }

\newcommand \cmatrix[1]{\left[\matrix{#1}\right]}  
\newcommand \bloc[4]{\left[\matrix{#1 & #2 \cr #3 & #4}\right]}

\newcommand \CC{\mathbb {C}} 
\newcommand \NN{\mathbb {N}} 
\newcommand \ZZ{\mathbb {Z}} 

\newcommand \gx{{\underline{x}}}
\newcommand \gy{{\underline{y}}}
\newcommand \gz{{\underline{z}}}
\newcommand \gu{{\underline{u}}}
\newcommand \gt{{\underline{t}}}
\newcommand \gc{{\underline{c}}}

\newcommand \gk{{\bf k}}
\newcommand \gA{\mathbf{A}}
\newcommand \gB{\mathbf{B}}
\newcommand \gC{\mathbf{C}}
\newcommand \gK{\mathbf{K}}
\newcommand \gL{\mathbf{L}}
\newcommand \gM{\mathbf{M}}
\newcommand \gT{\mathbf{T}}
\newcommand \gV{\mathbf{V}}
\newcommand \gZ{\mathbf{Z}}

\newcommand \GL{\mathsf{GL}}

\newcommand \cC {{\cal C}}
\newcommand \cD {{\cal D}}
\newcommand \cI {{\cal I}}
\newcommand \cJ {{\cal J}}
\newcommand \cF {{\cal F}}
\newcommand \cN {{\cal N}}
\newcommand \cP {{\cal P}}
\newcommand \cQ {{\cal Q}}
\newcommand \cM {{\cal M}}
\newcommand \cT {{\cal T}}
\newcommand \cL {{\cal L}}
\newcommand \cR {{\cal R}}
\newcommand \cS {{\cal S}}

\newcommand \rI {\mathrm{I}}
\newcommand \rD {\mathrm{D}}
\newcommand \rG {\mathrm{G}}
\newcommand \rV {\mathrm{V}}
\newcommand \rJ {\mathrm{J}}
\newcommand \rH {\mathrm{H}}
\newcommand \rK {\mathrm{K}}
\newcommand \rN {\mathrm{N}}
\newcommand \rP {\mathrm{P}}
\newcommand \rL{\mathrm{L}}
\newcommand \rPr{\mathrm{Pr}}

\newcommand\fa{\mathfrak{a}}
\newcommand\fb{\mathfrak{b}}
\newcommand\fc{\mathfrak{c}}
\newcommand\fA{\mathfrak{A}}
\newcommand\fB{\mathfrak{B}}
\newcommand\fD{\mathfrak{D}}
\newcommand\fI{\mathfrak{i}}
\newcommand\fII{\mathfrak{I}}
\newcommand\fj{\mathfrak{j}}
\newcommand\fJ{\mathfrak{J}}
\newcommand\fF{\mathfrak{F}}
\newcommand\ff{\mathfrak{f}}
\newcommand\ffg{\mathfrak{g}}
\newcommand\fG{\mathfrak{G}}
\newcommand\fh{\mathfrak{h}}
\newcommand\fl{\mathfrak{l}}
\newcommand\fm{\mathfrak{m}}
\newcommand\fM{\mathfrak{M}}
\newcommand\fp{\mathfrak{p}}
\newcommand\fP{\mathfrak{P}}
\newcommand\fq{\mathfrak{q}}
\newcommand\fV{\mathfrak{V}}

\newcommand \Zg {{\Z[G]}}

\newcommand \vu {\vee} 
\newcommand \vi {\wedge} 
\newcommand \Vu {\bigvee}
\newcommand \Vi {\bigwedge}
\newcommand \im {\rightarrow} 
\newcommand \da {\,\downarrow\!}
\newcommand \ua {\,\uparrow\!}
\newcommand \vd {\,\vdash\,}
\newcommand \vdu[1] {\,\vdash^{#1}\,}
\newcommand \vdb[1] {\,\vdash_{#1}\,}


\newcommand \uX {\underline{X}}
\newcommand \Ared {\gA\red}
\newcommand \AuX {\gA[\uX]}
\newcommand \Xm {X_1,\ldots,X_m}
\newcommand \AXn {\gA[\Xn]}
\newcommand \AXm {\gA[\Xm]}
\newcommand \xn {x_1,\ldots,x_n}
\newcommand \yn {y_1,\ldots,y_n}
\newcommand \cq {c_1,\ldots,c_q}

\newcommand \Pf {{{\cal P}_{\mathrm{f}}}}

\newcommand \Ann {\mathrm{Ann}}
\newcommand \Diag {\mathrm{Diag}}
\newcommand \Hom {\mathrm{Hom}}
\renewcommand \det {\mathrm{det}}
\renewcommand \dim {\mathrm{dim}}
\newcommand \Coker {\mathrm{Coker}}
\newcommand \Frac {\mathrm{Frac}}
\newcommand \Ker {\mathrm{Ker}}
\renewcommand \Im {\mathrm{Im}}
\newcommand \Id {\mathrm{Id}}
\newcommand \I {\mathrm{I}}
\newcommand \In {\I_n}
\newcommand \Mat {\mathrm{Mat}}
\newcommand \Rad {\mathrm{Rad}}
\newcommand \Tr {\mathrm{Tr}}
\newcommand \mod {\,\mathrm{mod}\,}
\newcommand \pgcd {\mathrm{pgcd}}
\newcommand \red {_\mathrm{red}}

\newcommand \GKO {\mathsf{GK}_{0}}
\newcommand \Pic {\mathsf{Pic}}
\newcommand \Spec {\mathsf{Spec}}

\newcommand \num {{n$^{\mathrm{o}}$}}
 
\newcommand \recu {récur\-rence }
\newcommand \recuz {récur\-rence}
\newcommand \hdr {hypo\-thèse de \recu }
\newcommand \hdrz {hypo\-thèse de \recuz}
\newcommand \cad {c'est-\`a-dire }
\newcommand \Cad {C'est-\`a-dire }
\newcommand \cade {c'est-\`a-dire encore }
\newcommand \ssi {si, et seulement si, }
\newcommand \cnes {con\-di\-tion néces\-saire et 
suffi\-sante }
\newcommand \spdg {sans per\-te de géné\-ra\-lité }
\newcommand \Propeq {Les pro\-prié\-tés suivantes sont 
équiva\-lentes:}
\newcommand \propeq {les pro\-prié\-tés sui\-van\-tes sont 
équiva\-lentes:}
\newcommand \disept {17$^{\mathrm{\grave{e}me}}$ problème de Hilbert 
}


\newcommand \Amo {$\gA$-mo\-du\-le }
\newcommand \Amos {$\gA$-mo\-du\-les }
\newcommand \Amoz {$\gA$-mo\-du\-le}
\newcommand \Amosz {$\gA$-mo\-du\-les}

\newcommand \Bmo {$\gB$-mo\-du\-le }
\newcommand \Bmos {$\gB$-mo\-du\-les }
\newcommand \Bmoz {$\gB$-mo\-du\-le}
\newcommand \Bmosz {$\gB$-mo\-du\-les}

\newcommand \Ali {ap\-pli\-ca\-tion $\gA$-\lin }
\newcommand \Alis {ap\-pli\-ca\-tions $\gA$-\lins }
\newcommand \Aliz {ap\-pli\-ca\-tion $\gA$-\linz}
\newcommand \Alisz {ap\-pli\-ca\-tions $\gA$-\linsz}

\newcommand \alg {al\-gè\-bre }
\newcommand \algs {al\-gè\-bres }
\newcommand \algz {al\-gè\-bre}
\newcommand \algsz {al\-gè\-bres}

\newcommand \algo{al\-go\-rith\-me }
\newcommand \algos{al\-go\-rith\-mes }
\newcommand \algoz{al\-go\-rith\-me}
\newcommand \algosz{al\-go\-rith\-mes}

\newcommand \ali {appli\-ca\-tion \lin }
\newcommand \alis {appli\-ca\-tions \lins }
\newcommand \aliz {appli\-ca\-tion \linz}
\newcommand \alisz {appli\-ca\-tions \linsz}

\newcommand \auto {auto\-mor\-phisme }
\newcommand \autos {auto\-mor\-phismes }
\newcommand \autosz {auto\-mor\-phismes}
\newcommand \autoz {auto\-mor\-phisme}

\newcommand \cac {corps algé\-bri\-que\-ment clos }
\newcommand \cacz {corps algé\-bri\-que\-ment clos}

\newcommand \carn{carac\-té\-ri\-sation }  
\newcommand \carns{carac\-té\-ri\-sations }

\newcommand \coe {co\-ef\-fi\-cient }
\newcommand \coes {co\-ef\-fi\-cients }
\newcommand \coez {co\-ef\-fi\-cient}
\newcommand \coesz {co\-ef\-fi\-cients}

\newcommand \coli {com\-bi\-nai\-son li\-né\-ai\-re }
\newcommand \colis {com\-bi\-nai\-sons li\-né\-ai\-res }
\newcommand \coliz {com\-bi\-nai\-son li\-né\-ai\-re}
\newcommand \colisz {com\-bi\-nai\-sons li\-né\-ai\-res}

\newcommand \com {co\-ma\-xi\-maux }
\newcommand \comz {co\-ma\-xi\-maux}

\newcommand \ddk {di\-men\-sion de Krull }
\newcommand \ddkz {di\-men\-sion de Krull}
\newcommand \ddi {de di\-men\-sion infé\-rieure ou égale \`a~}

\newcommand \dfn{défi\-nition }  
\newcommand \dfns{défi\-nitions }  
\newcommand \dfnz{défi\-nition}  
\newcommand \dfnsz{défi\-nitions}

\newcommand \dvz {di\-viseur de zé\-ro }
\newcommand \dvzs {di\-viseurs de zé\-ro }
\newcommand \dvzz {di\-viseur de zé\-ro}
\newcommand \dvzsz {di\-viseurs de zé\-ro}

\newcommand \egmt {éga\-lement }

\newcommand \egt {éga\-lité }
\newcommand \egts {éga\-lités }
\newcommand \egtz {éga\-lité}
\newcommand \egtsz {éga\-lités}

\newcommand \elr{élé\-men\-tai\-re }  
\newcommand \elrs{élé\-men\-tai\-res }  
\newcommand \elrz{élé\-men\-tai\-re}  
\newcommand \elrsz{élé\-men\-tai\-res}  

\newcommand \elt{élé\-ment }  
\newcommand \elts{élé\-ments }  
\newcommand \eltz{élé\-ment}  
\newcommand \eltsz{élé\-ments}  

\def \endo {endo\-mor\-phisme }
\def \endoz {endo\-mor\-phisme}
\def \endos {endo\-mor\-phismes }

\newcommand \entrel {rela\-tion impli\-ca\-tive }
\newcommand \entrelz {rela\-tion impli\-ca\-tive}
\newcommand \entrels {rela\-tions impli\-ca\-tives }
\newcommand \entrelsz {rela\-tions impli\-ca\-tives}

\newcommand\evc{es\-pa\-ce vec\-to\-riel } 
\newcommand\evcs{es\-pa\-ces vec\-to\-riels } 
\newcommand\evcz{es\-pa\-ce vec\-to\-riel} 
\newcommand\evcsz{es\-pa\-ces vec\-to\-riels} 

\newcommand \gtr{gé\-né\-ra\-teur }  
\newcommand \gtrs{gé\-né\-ra\-teurs }  
\newcommand \gtrz{gé\-né\-ra\-teur}  
\newcommand \gtrsz{gé\-né\-ra\-teurs}

\newcommand \homo {ho\-mo\-mor\-phisme }
\newcommand \homoz {ho\-mo\-mor\-phisme}
\newcommand \homos {ho\-mo\-mor\-phismes }
\newcommand \homosz {ho\-mo\-mor\-phismes}

\newcommand \id {idéal }
\newcommand \ids {idéaux }
\newcommand \idz {idéal}
\newcommand \idsz {idéaux}

\newcommand \idd {idéal déter\-minan\-tiel }
\newcommand \idds {idéaux déter\-minan\-tiels }
\newcommand \iddz {idéal déter\-minan\-tiel}
\newcommand \iddsz {idéaux déter\-minan\-tiels}

\newcommand \idema {\id ma\-xi\-mal }
\newcommand \idemaz {\id ma\-xi\-mal}
\newcommand \idemas {\ids ma\-xi\-maux }
\newcommand \idemasz {\ids ma\-xi\-maux}

\newcommand \idep {\id pre\-mier }
\newcommand \idepz {\id pre\-mier}
\newcommand \ideps {\ids pre\-miers }
\newcommand \idepsz {\ids pre\-miers}

\newcommand \idemi {\idep mi\-ni\-mal }
\newcommand \idemis {ideps mi\-ni\-maux }
\newcommand \idemiz {\idep mi\-ni\-mal}
\newcommand \idemisz {ideps mi\-ni\-maux}

\newcommand \idf {idéal de Fitting }
\newcommand \idfs {idéaux de Fitting }
\newcommand \idfz {idéal de Fitting}
\newcommand \idfsz {idéaux de Fitting}

\newcommand \idm {idem\-po\-tent }
\newcommand \idms {idem\-po\-tents }
\newcommand \idmz {idem\-po\-tent}
\newcommand \idmsz {idem\-po\-tents}

\newcommand \idme {idem\-po\-tente }
\newcommand \idmes {idem\-po\-tentes }
\newcommand \idmez {idem\-po\-tente}
\newcommand \idmesz {idem\-po\-tentes}

\newcommand \iso {iso\-mor\-phisme }
\newcommand \isos {iso\-mor\-phismes }
\newcommand \isosz {iso\-mor\-phismes}
\newcommand \isoz {iso\-mor\-phisme}

\newcommand \itf {\id \tf}
\newcommand \itfs {\ids \tf}
\newcommand \itfz {\id \tfz}
\newcommand \itfsz {\ids \tfz}

\newcommand \lin {li\-né\-ai\-re }
\newcommand \lins {li\-né\-ai\-res }
\newcommand \linz {li\-né\-ai\-re}
\newcommand \linsz {li\-né\-ai\-res}

\newcommand \mo {mo\-no\"{\i}de }
\newcommand \moco {\mos\com}
\newcommand \mocoz {\mos\comz}
\newcommand \mos {mo\-no\"{\i}des }
\newcommand \mosz {mo\-no\"{\i}des}
\newcommand \moz {mo\-no\"{\i}de}

\newcommand \mptf {mo\-dule \ptf}
\newcommand \mptfs {mo\-dules \ptfs}
\newcommand \mptfz {mo\-dule \ptfz}
\newcommand \mptfsz {mo\-dules \ptfsz}

\newcommand \mrc {mo\-dule \prc }
\newcommand \mrcz {mo\-dule \prcz}
\newcommand \mrcs {mo\-dules \prcs }
\newcommand \mrcsz {mo\-dules \prcsz}

\newcommand \ndz {non diviseur de zéro }
\newcommand \ndzs {non diviseurs de zéro }

\newcommand \nst {Null\-stellen\-satz }
\newcommand \nstz {Null\-stellen\-satz}
\newcommand \nsts {Null\-stellen\-s\"atze }
\newcommand \nstsz {Null\-stellen\-s\"atze}

\newcommand \odz {ouvert de Zariski }

\newcommand \oqc {ouvert \qc}
\newcommand \oqcs {ouverts \qcs}
\newcommand \oqcz {ouvert \qcz}
\newcommand \oqcsz {ouverts \qcsz}

\newcommand \pa {couple saturé }
\newcommand \pas {couples saturés }
\newcommand \paz {couple saturé}
\newcommand \pasz {couples saturés}

\newcommand\pb{pro\-blè\-me }  
\newcommand\pbs{pro\-blè\-mes }  
\newcommand\pbz{pro\-blè\-me}  
\newcommand\pbsz{pro\-blè\-mes}

\newcommand \pf {de présen\-tation finie }
\newcommand \pfz {de présen\-tation finie}

\newcommand \pn {présen\-ta\-tion }
\newcommand \pns {présen\-ta\-tions }
\newcommand \pnz {présen\-ta\-tion}
\newcommand \pnsz {présen\-ta\-tions}

\newcommand \pol {poly\-nôme }
\newcommand \pols {poly\-nômes }
\newcommand \polz {poly\-nôme}
\newcommand \polsz {poly\-nômes}

\newcommand \polcar 
{\pol ca\-rac\-té\-ris\-ti\-que }
\newcommand \polcarz 
{\pol ca\-rac\-té\-ris\-ti\-que}

\newcommand \prc {\pro de rang constant }
\newcommand \prcs {\pros de rang constant }
\newcommand \prcz {\pro de rang constant}
\newcommand \prcsz {\pros de rang constant}

\newcommand \prn {pro\-jec\-tion }
\newcommand \prns {pro\-jec\-tions }
\newcommand \prnz {pro\-jec\-tion}
\newcommand \prnsz {pro\-jec\-tions}

\newcommand \pro {pro\-jec\-tif }
\newcommand \pros {pro\-jec\-tifs }
\newcommand \proz {pro\-jec\-tif}
\newcommand \prosz {pro\-jec\-tifs}

\newcommand \ptf {\pro \tf }
\newcommand \ptfz {\pro \tfz}
\newcommand \ptfs {\pros \tf }
\newcommand \ptfsz {\pros \tfz}

\newcommand \qc {quasi-compact }
\newcommand \qcs {quasi-compacts }
\newcommand \qcz {quasi-compact}
\newcommand \qcsz {quasi-compacts}

\newcommand \qi {qua\-si in\-tè\-gre }
\newcommand \qis {qua\-si in\-tè\-gres }
\newcommand \qisz {qua\-si in\-tè\-gres}
\newcommand \qiz {qua\-si in\-tè\-gre}

\newcommand \rdl {rela\-tion de dépen\-dance li\-néai\-re }
\newcommand \rdls {rela\-tions de dépen\-dance li\-néai\-re }
\newcommand \rdlsz 
{rela\-tions de dépen\-dance li\-néai\-re}
\newcommand \rdlz {rela\-tion de dépen\-dance li\-néai\-re}

\newcommand \rdi {rela\-tion de dépen\-dance in\-tégra\-le }
\newcommand \rdis {rela\-tions de dépen\-dance in\-tégra\-le }
\newcommand \rdiz {rela\-tion de dépen\-dance in\-tégra\-le}
\newcommand \rdisz 
{rela\-tions de dépen\-dance in\-tégra\-le}

\newcommand \sad {struc\-ture algé\-bri\-que dyna\-mi\-que }
\newcommand \sads {struc\-tures algé\-bri\-ques dyna\-mi\-ques }
\newcommand \sadz {struc\-ture algé\-bri\-que dyna\-mi\-que}
\newcommand \sadsz 
{struc\-tures algé\-bri\-ques dyna\-mi\-ques}

\newcommand \sfio {sys\-tème fondamental d'\idms ortho\-gonaux }
\newcommand \sfios {sys\-tèmes fondamentaux d'\idms ortho\-gonaux }
\newcommand \sfioz {sys\-tème fondamental d'\idms ortho\-gonaux}
\newcommand \sfiosz {sys\-tèmes fondamentaux d'\idms ortho\-gonaux}

\newcommand \sli {\sys \lin }
\newcommand \slis {\syss \lins }
\newcommand \slisz {\syss \linsz}
\newcommand \sliz {\sys \linz}

\newcommand \sys {sys\-tè\-me }
\newcommand \syss {sys\-tè\-mes }
\newcommand \sysz {sys\-tè\-me}
\newcommand \syssz {sys\-tè\-mes}

\newcommand \tf {de type fini }
\newcommand \tfz {de type fini} 

\newcommand \trdi {treil\-lis dis\-tri\-bu\-tif }
\newcommand \trdis {treil\-lis dis\-tri\-bu\-tifs }
\newcommand \trdiz {treil\-lis dis\-tri\-bu\-tif}
\newcommand \trdisz {treil\-lis dis\-tri\-bu\-tifs}

\newcommand \Tho {Théo\-rè\-me }
\newcommand \tho {théo\-rè\-me }
\newcommand \thos {théo\-rè\-mes }
\newcommand \thoz {théo\-rè\-me}
\newcommand \thosz {théo\-rè\-mes}

\newcommand \vfn {véri\-fi\-cation }
\newcommand \vfns {véri\-fi\-cations }
\newcommand \vfnz {véri\-fi\-cation}
\newcommand \vfnsz {véri\-fi\-cations}

\newcommand \zed {z\'{e}\-ro-di\-men\-sion\-nel }
\newcommand \zedz {z\'{e}\-ro-di\-men\-sion\-nel}
\newcommand \zede {z\'{e}\-ro-di\-men\-sion\-nel\-le }
\newcommand \zedez {z\'{e}\-ro-di\-men\-sion\-nel\-le}
\newcommand \zeds {z\'{e}\-ro-di\-men\-sion\-nels }
\newcommand \zedsz {z\'{e}\-ro-di\-men\-sion\-nels}
\newcommand \zedes {z\'{e}\-ro-di\-men\-sion\-nel\-les }
\newcommand \zedesz {z\'{e}\-ro-di\-men\-sion\-nel\-les}


\newcommand \cof {cons\-truc\-tif }
\newcommand \cofs {cons\-truc\-tifs }
\newcommand \cofz {cons\-truc\-tif}
\newcommand \cofsz {cons\-truc\-tifs}

\newcommand \cov {cons\-truc\-tive }
\newcommand \covz {cons\-truc\-tive}
\newcommand \covsz {cons\-truc\-tives}
\newcommand \covs {cons\-truc\-tives }

\newcommand \coma {\maths\covs}
\newcommand \comaz {\maths\covsz}
\newcommand \clama {\maths classiques }
\newcommand \clamaz {\maths classiques}

\renewcommand \cot {cons\-truc\-ti\-vement }
\newcommand \cotz {cons\-truc\-ti\-vement}

\newcommand \LLPO{{\bf LLPO}}

\newcommand \maths {mathé\-ma\-tiques }
\newcommand \mathsz {mathé\-ma\-tiques}
\renewcommand \math {mathé\-ma\-tique }
\newcommand \mathz {mathé\-ma\-tique}

\newcommand \prco {démonstration \cov}
\newcommand \prcos {démonstrations \covs}
\newcommand \prcoz {démonstration \covz}
\newcommand \prcosz {démonstration \covsz}

\newcommand \pte {principe du tiers exclu }

\newcommand \sdz {sans \dvz}
\newcommand \sdzz {sans \dvzz}

\newcommand \tcg {théorème de complé\-tude de G\"odel 
} 
\newcommand \tcgz {théorè\-me de com\-plé\-tude de G\"odel} 
\newcommand \acgz {axiome de com\-plé\-tude de G\"odel} 
\newcommand \Tcgi {Le \tcg impli\-que le ré\-sultat 
sui\-vant. }

\hyphenation{al-go-rith-mi-que
al-go-rith-mi-que-ment
cons-tant cons-tan-te cons-tants cons-tan-tes
cons-truc-tif cons-truc-ti-ve cons-truc-ti-ve-ment
exac-te exac-te-ment
ex-pli-ci-te ex-pli-ci-tes ex-pli-ci-te-ment ex-pli-ci-ter
ex-pli-ci-tons
ex-ten-sion ex-ten-sions 
intui-tif intui-tive intui-tion 
ma-xi-mal ma-xi-maux
res-pec-ti-ve-ment
uni-mo-du-lai-re
}

\thickmuskip = 7mu plus 2mu

\date{novembre 2007}
\title{ Anneaux seminormaux
\\
(d'après Thierry Coquand)}

\sibil{\author{Henri Lombardi  $^*$ , Claude Quitté $^\dag$}}
\sinotbil{\author{
Henri Lombardi (\thanks{~Equipe de Mathématiques, UMR CNRS 6623,
UFR des Sciences et Techniques, Université Marie et Louis Pasteur,
25030 BESANCON cedex, FRANCE,
email: {\tt henri.lombardi@umlp.fr}.}~), Claude Quitté (\thanks {~Laboratoire de Mathématiques, SP2MI, Boulevard 3, Teleport 2, BP 179, 86960 FUTUROSCOPE Cedex,
FRANCE, email: {\tt claude.quitte@orange.fr}}~) }
}

\pagestyle{headings}

\maketitle

\rdb
\label{beginfrench}

\begin{abstract}
 Le théorème de Traverso-Swan affirme qu'un anneau 
réduit $\gA$ est seminormal \ssi l'\homo naturel $\Pic \,\gA\to\Pic 
\,\gA[X]$ est un \iso (\cite{fTra,fSwan}). Nous exposons ici la \prco 
\elr de ce résultat qui a été donnée par Thierry Coquand 
dans~\cite{fcoq}.

Cet exemple est paradigmatique de la méthode \covz.
On obtient au bout du compte une démonstration plus simple que 
la démonstration classique initiale. Mais le plus important est que 
l'argument classique \gui{par l'absurde et au moyen d'un objet 
idéal} peut être décrypté selon une technique générale
qui s'inspire de la philosophie suivante: l'utilisation des objets
purement idéaux construits avec l'axiome du choix et le principe du 
tiers exclu peut être remplacée par celle d'objets concrets qui 
sont des approximations finies de ces objets idéaux. 
\end{abstract}

\medskip \noindent {\bf Mots clés}. Anneaux seminormaux, théorème de Traverso, \alg \covz, idéal premier minimal, méthode dynamique.
 

\smallskip \noindent {\bf MSC}. 03F65, 13F45, 13B40, 14Qxx

\newpage
\startcontents[french]

\setcounter{tocdepth}{4}
\markboth{Table des matières}{Table des matières}

\printcontents[french]{}{1}{}
\normalsize

\newpage

\section {Introduction}

\begin{flushright}
{\small
 Quant  à moi je proposerais de s'en tenir 
 aux règles 
suivantes:


\item 1. Ne jamais envisager que des objets susceptibles 
d'être définis 

en un nombre fini de mots;

\item 2. Ne jamais perdre de vue que toute proposition
 sur l'infini doit
  
être la traduction, l'énoncé abrégé 
de propositions sur le 
fini;

\item 3. \'Eviter les classifications et les définitions 
non 
prédicatives.

 
\medskip \rm Henri Poincaré,  

in  {\it La logique de l'infini } 
(Revue de Métaphysique et de Morale 1909). 

Réédité dans  {\it 
Dernières pensées}, Flammarion.
}
\end{flushright}

Le théorème de Traverso-Swan affirme qu'un anneau 
réduit $\gA$ est seminormal \ssi l'\homo naturel $\Pic \,\gA\to\Pic 
\,\gA[X]$ est un \iso (\cite{fTra,fSwan}). 

Nous exposons ici la \prco 
\elr de ce résultat qui a été donnée par Thierry Coquand 
dans~\cite{fcoq}.

La méthode utilisée consiste à mettre tout d'abord en place une 
démonstration classique la plus \elr possible. 
Après cette simplification, il reste des arguments 
hautement non \cofsz: démonstration par l'absurde basée sur la 
considération d'un \idep minimal.

Le décryptage se fait alors avec la \gui{méthode dynamique} qui 
permet de gérer à la fois le tiers exclu à l'{\oe}uvre dans le 
raisonnement par l'absurde et l'objet idéal que constitue l'\idep 
minimal générique présent dans la démonstration classique.

\smallskip Cet exemple est paradigmatique d'une méthode \cov mise au
point récemment, selon une technique générale
qui s'inspire de la philosophie suivante: l'utilisation des objets
purement idéaux construits avec l'axiome du choix et le principe du 
tiers exclu peut être remplacée par celle 
\emph{d'objets concrets qui 
sont des approximations finies de ces objets idéaux}.

L'histoire commence avec le système de calcul formel D5 
\cite{fD5} dans lequel est mis en évidence que l'on peut calculer dans la clôture algébrique d'un corps, même si l'on ne sait pas la construire comme un objet mathématique usuel. Ainsi était donnée une signification constructive claire
à l'objet idéal \gui{clôture algébrique}.

Dans l'article \cite{fCLR} est expliqué comment 
on peut interpréter les démonstrations abstraites des résultats de type Nullstellensatz obtenues via la théorie des modèles. Ici les objets idéaux sont les modèles d'une théorie formelle cohérente (ces modèles existent en vertu du tiers exclu et d'une version affaiblie de l'axiome du choix). 
Dans la démonstration devenue constructive, chacun de ces objets idéaux est remplacé par \gui{une information finie concernant l'objet idéal}.

Dans \cite{fcl,fCLR2}, les chaines d'\ideps qui interviennent dans la
définition abstraite de la dimension de Krull d'un anneau $\gA$ sont remplacées par des
suites finies d'\elts de l'anneau. 
Ainsi est obtenue une \dfn \cov \elr de la dimension de Krull,
dans laquelle les \ideps ont été totalement éliminés. 
Pour les anneaux usuellement utilisés en \maths la \dfn constructive de la dimension de Krull devient un outil algorithmique, même quand
ne sont pas disponibles les facilités apportées par les bases de Gr\"obner. En particulier certains grands \thos d'\alg commutative
qui utilisent la dimension de Krull ont été complètement décryptés \cot
dans \cite{fCoq3,fclq}. C'est le cas pour le \gui{splitting-off} de Serre, les \thos \gui{stable range} et \gui{de simplification} de Bass, et le \tho de Forster-Swan.
En outre la version \cov qui a été mise au point égale ou améliore les meilleures versions classiques de ces \thosz, obtenues par R. Heitmann dans son remarquable article \gui{non noethérien} de 1984~\cite{fHei84}.

Signalons enfin que dans \cite{fY1}, I. Yengui a montré comment éliminer l'utilisation des \idemas dans les démonstrations classiques pour les rendre \covs et a ainsi apporté un raffinement essentiel à la méthode dynamique.

\smallskip Dans l'exemple qui est traité ici, on obtient au bout du compte une démonstration élégante plus simple que 
la démonstration classique initiale. Mais le plus important est que 
l'argument classique \gui{par l'absurde et au moyen d'un objet 
idéal} peut être décrypté selon la méthode générale expliquée
ci-dessus. 
Le fait de considérer la localisation en un \idep minimal $\fp$ générique 
est remplacé par
un calcul arborescent où l'on essaie de rendre inversibles le maximum 
d'\elts qui se présentent comme obstacles à la démonstration.
L'arborescence provient du fait que dans la démonstration classique, on 
utilise un argument du type \gui{tout \elt $x$ est dans $\fp$ ou hors 
de $\fp$}. Comme l'\idep est minimal, a priori $x$ doit être hors de 
$\fp$, et ce n'est que lorsque le calcul montre que l'on a inversé $0$ 
qu'on revient en arrière pour ouvrir une autre branche du calcul.

\smallskip Dans la section \ref{fsec2} nous expliquons la transformation de démonstration mise en œuvre dans le cas intègre. 
Nous donnons en annexe
une démonstration détaillée du cas d'un anneau seminormal arbitraire.

\section{Préliminaires} \label{fsecprelim}
\markboth{Anneaux seminormaux}{1. Préliminaires}

Dans cet article $\gA$, $\gB$, $\gC$ désignent des anneaux commutatifs. 

Si l'on ne précise pas un \homo est un \homo d'anneaux.

\subsubsection*{Anneaux seminormaux} 
\addcontentsline{toc}{subsection}{Anneaux seminormaux}

Un anneau intègre $\gA$ est dit \emph{seminormal} si lorsque 
$b^2=c^3\neq 0$ alors
l'\elt $a=b/c$ du corps des fractions est en fait dans $\gA$. Notons 
que $a^3=b$ et $a^2=c$.

Un anneau quelconque $\gA$ est dit \emph{seminormal} si chaque fois 
que $b^2=c^3$, il existe $a\in\gA$ tel que $a^3=b$ et $a^2=c$.

Ceci implique que $\gA$  est réduit: si $b^2=0$  alors $b^2=0^3$, 
d'où un  $a\in\gA$ avec $a^3=b$ et $a^2=0$, donc $b=0$.

Dans un anneau si $x^2=y^2$ et  $x^3=y^3$ alors $(x-y)^3=0$.
Ainsi:

\begin{ffact} 
\label{ffactRed1} 
Dans un anneau réduit  $x^2=y^2$ et  $x^3=y^3$ impliquent $x=y$.
\end{ffact}

En conséquence le $a$ ci-dessus est toujours unique. En outre 
$\Ann\,b=\Ann\,c=\Ann\,a$.

\subsubsection*{Catégorie des \Amos \ptfs} 
\addcontentsline{toc}{subsection}{Catégorie des \Amos \ptfsz}

Un \mptf est un module $M$ isomorphe à un facteur direct dans un 
module libre de rang fini: $M\oplus M'\simeq\gA^m$. De manière 
équivalente, c'est un module isomorphe à l'image d'une matrice de 
\prnz.

Une \Ali $\psi:M\to N$ entre \mptfs avec  $M\oplus M'\simeq\gA^m$ et 
 $N\oplus N'\simeq\gA^n$ peut être représentée par 
$\wi{\psi}:\gA^m \to \gA^n$ définie par $\wi{\psi}(x\oplus 
x')=\psi(x)$.

En d'autres termes la catégorie des \mptfs sur $\gA$ est 
équivalente à la catégorie dont les objets sont les matrices 
carrées \idmes à \coes dans $\gA$, un morphisme
de $P$ vers $Q$ étant une matrice $H$ de format convenable telle que 
$QH=H=HP$. En particulier l'identité de $P$ est représentée par 
$P$.

\begin{ffact} 
\label{ffactcatmptf} 
Si les \mptfs $M$ et $N$ sont représentés par les matrices \idmes  
$P=(p_{i,j})_{i,j\in I}\in \gA^{I\times I}$ et 
$Q=(q_{k,\ell})_{k,\ell\in J}\in \gA^{J\times J}$, alors:
\begin{enumerate}
\item 
La somme directe $M\oplus N$ est représentée par 
 $\Diag(P,Q)=\bloc{P}{0}{0}{Q}$.
\item Le produit tensoriel
 $M\otimes N$ est représenté par le produit de Kronecker
 $P\otimes Q=(r_{(i,k),(j,\ell)})_{(i,k),(j,\ell)\in I\times J}$, où 
$r_{(i,k),(j,\ell)}=p_{i,j}q_{k,\ell}$.
\item \label{item3ffactcatmptf} $M$ et $N$ sont isomorphes \ssi les matrices $\Diag(P,0_n)$ et  
$\Diag(0_m,Q)$ sont semblables
\end{enumerate}
\end{ffact}
Le dernier point se vérifie en remarquant que
la projection sur $M$ dans $M\oplus M'\oplus \gA^n$ est 
représentée par la matrice   $\Diag(P,0_n)$  tandis que la  
projection sur $N$ dans $\gA^m\oplus N\oplus N'$ est représentée 
par la matrice   $\Diag(0_m,Q)$, et en décomposant 
$\gA^m\oplus\gA^n$ sous la forme
$M\oplus M'\oplus N \oplus N'$ on voit que les deux projections sont 
conjuguées par l'\auto qui échange~$M$ et~$N$.

\subsubsection*{Rang d'un \mptf} 
\addcontentsline{toc}{subsection}{Rang d'un \mptf}
Si $\varphi :M\to M$ est un \endo du \Amo \ptf $M$ image de la
matrice \idme $P\in\gA^{n\times n}$ et si $H\in\gA^{n\times n}$ représente $\varphi$  (avec $H=PH=HP$),
notons $N=\Ker\,P$ de sorte que $M\oplus N =\gA^n$.
Alors on peut définir le \emph{déterminant} de $\varphi$ par 
\[\det(\varphi)=\det(\varphi \oplus \Id_N)=\det(H+(\In-P)).\]

Soit $\mu_X$ la multiplication par $X$  dans le $\gA[X]$-module $M[X]$. 
Ce module, étendu de $M$  depuis~$\gA$, est  représenté 
par la matrice $P$ vue comme \elt de $\gA[X]^{n\times n}$. Alors $\det(\mu_X)=\mathrm{R}_M(X)=r(X)$ est
un \pol qui vérifie $r(XY)=r(X)r(Y)$ et $r(1)=1$. En d'autres termes 
ses \coes forment un \sfioz. Le module est dit de rang $k$ si 
$r(X)=X^k$.

Un calcul direct montre le fait suivant.

\begin{ffact} 
\label{ffactprc1} 
Une matrice $P=(p_{i,j})$ a pour image un \mrc 1 \ssi les deux 
propriétés suivantes sont vérifiées
\begin{itemize}
\item  $\Vi^2\,P=0$, \cad tous les mineurs d'ordre 2 sont nuls,
\item  $\Tr\,P=\sum_{i}p_{ii}=1$.
\end{itemize}
\end{ffact}

\subsubsection*{Quand l'image d'une matrice de projection est libre} 
\addcontentsline{toc}{subsection}{Quand l'image d'une matrice de 
projection est libre}

Si $P\in\gA^{n\times n}$ est une matrice de \prn dont l'image est 
libre
de rang $r$, son noyau n'est
pas automatiquement libre, et la matrice n'est donc pas à tout coup 
semblable
à la matrice de projection standard \[\I_{n,r}=\Diag(\I_{r},0_{n-r})= 
\bloc{\I_{r}}{0} {0}{0_{n-r}}.\]

Donnons une caractérisation simple pour
le fait que l'image d'une matrice \idme est libre.

\begin{fproposition} 
\label{fpropImProjLib} 
Soit $P\in\gA^{n\times n}$. La matrice 
$P$ est \idme et d'image libre de rang $r$  \ssi
il existe deux matrices $X\in\gA^{n\times r}$ et  $Y\in\gA^{r\times 
n}$ telles que
$YX=\I_r$ et $P=XY$. En outre,
\begin{enumerate}
\item  $\Im\,P=\Im\,X\simeq \Im\,Y$.
\item  Pour toutes matrices $X',Y'$ de mêmes formats que $X$  et $Y$ 
et
telles que $P=X'Y'$, il existe
une unique matrice   $U\in\GL_r(\gA)$ telle $X'=XU$ et $Y=UY'$. En 
fait
$U=YX'$, $U^{-1}=Y'X$, $Y'X'=\I_r$ et les colonnes de $X'$ forment une 
base de~$\Im\,P$.
\end{enumerate}
Une autre \carn possible est la suivante: la matrice $\Diag(P,0_r)$ 
est
semblable à la matrice de \prn standard $\I_{n+r,r}$.
\end{fproposition}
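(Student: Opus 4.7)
Mon plan est de démontrer d'abord l'équivalence principale dans chaque sens, puis d'en déduire les compléments. Pour le \emph{sens direct}, je prendrais pour colonnes de $X \in \gA^{n \times r}$ une base de $\Im\,P$. L'égalité $\Im\,X = \Im\,P$ jointe à $P^2 = P$ donne $PX = X$ ; l'injectivité de $X$ assurerait alors l'existence d'une unique $Y$ avec $P = XY$, ainsi que $YX = \I_r$ en appliquant cette injectivité à $XYX = PX = X$. Le \emph{sens réciproque} se réduit à une vérification directe : de $YX = \I_r$ et $P = XY$, on tire $P^2 = XYXY = P$, $PX = X$ (donc $\Im\,X = \Im\,P$), et l'injectivité de $X$ (car $Xz = 0$ implique $z = YXz = 0$), ce qui fait des colonnes de $X$ une base de $\Im\,P$, libre de rang $r$.

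Pour l'item 1, j'exhiberais la suite exacte $\gA^n \vers{\I_n - P} \gA^n \vers{Y} \gA^r$ : la composition est nulle puisque $Y(\I_n - P) = Y - YXY = 0$, et si $Yz = 0$ alors $Pz = XYz = 0$, donc $z = (\I_n - P)z$ ; le passage au quotient fournit $\Im\,Y \simeq \gA^n/\Ker\,Y \simeq \Im\,P$. Pour l'item 2, je poserais $U = YX'$ et $V = Y'X$ ; les identités $UV = YX'Y'X = YPX = YX = \I_r$, $X'V = X'Y'X = PX = X$ et $UY' = YX'Y' = YP = Y$ sont alors immédiates.

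Le point \emph{le plus délicat} sera de passer du fait que $U$ admet un inverse à droite à l'inversibilité effective de $U$, afin d'en déduire $X' = XU$, $Y' = VY$, puis $Y'X' = VYXU = VU = \I_r$. J'invoquerais pour cela le fait classique qu'une matrice carrée à \coes dans un anneau commutatif ayant un inverse à droite est inversible (via le déterminant). L'unicité de $U$ découle alors de l'injectivité de $X$. Enfin, la dernière \carn s'obtient en appliquant l'item~3 du Fait~\ref{ffactcatmptf} : $\Im\,P$ étant libre, isomorphe à $\Im\,\I_r$, les matrices $\Diag(P,0_r)$ et $\Diag(0_n,\I_r)$ sont semblables, et cette dernière est conjuguée à $\I_{n+r,r}$ par une permutation de base.
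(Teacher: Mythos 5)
Votre démonstration est correcte et suit essentiellement le même chemin que celle de l'article : même construction de $X$ et $Y$ dans le sens direct, mêmes vérifications dans le sens réciproque, même suite exacte $\gA^n\vers{\I_n-P}\gA^n\vers{Y}\gA^r$ pour le point \emph{1}, mêmes matrices $U=YX'$ et $V=Y'X$ pour le point \emph{2}, et même appel au point \emph{3} du fait sur la catégorie des modules projectifs pour la dernière caractérisation. Le seul écart est que vous explicitez un passage que l'article laisse tacite, à savoir que $UV=\I_r$ entraîne l'inversibilité de la matrice carrée $U$ (inverse à droite donc inversible, via le déterminant sur un anneau commutatif), ce qui est exact et rend l'argument plus complet sans en changer la nature.
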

\begin{proof}
Supposons que $\Im\,P$ est libre de rang $r$.
Pour colonnes de $X$ on prend une base de $\Im\,P$. Alors, il existe 
une unique
matrice $Y$ telle que $P=XY$. Puisque $PX=X$ (car $P^2=P$) on~a 
$XYX=X$.
Puisque les colonnes de $X$ sont indépendantes et que $(\I_r-YX)X=0$ 
on a $\I_r=YX$.

\noindent Supposons $YX=\I_r$ et $P=XY$. Alors 
\[P^2=XYXY=X\I_rY=XY=P\;\hbox{ et }\;PX=XYX=X.\] 
Donc $\Im\,P=\Im\,X$. En outre les colonnes de $X$ sont 
indépendantes car $XZ=0$ implique \hbox{$Z=YXZ=0$}.  

\smallskip \noindent \emph{1.} La suite $\gA^n\vers{\In-P}\gA^n\vers{Y}\gA^r$ est 
exacte: en effet
$Y(\In-P)=0$ et si $YZ=0$ alors $PZ=0$ donc $Z=(\In-P)Z$. Ainsi 
$\Im\,Y\simeq \gA^n\sur{\Ker\,Y}=\gA^n\sur{\Im(\In-P)\simeq \Im\,P}$.

\smallskip \noindent \emph{2.} Si maintenant  $X'$ et $Y'$ sont de mêmes formats que $X$ et $Y$ 
et si $P=X'Y'$, on pose $U=YX'$ et \hbox{$V=Y'X$}.
Alors $UV=YX'Y'X=YPX=YX=\I_r$; $X'V=X'Y'X=PX=X$. Donc \hbox{$X'=XU$};
$UY'=YX'Y'=YP=Y$, et $Y'=VY$. Enfin $Y'X'=VYXU=VU=\I_r$.

\smallskip \noindent  Concernant la dernière \carn il s'agit d'une simple application 
du point \emph{\ref{item3ffactcatmptf}} dans le fait~\ref{ffactcatmptf}.
\end{proof}

Nous résumons la situation pour les modules \prcs 1.

\begin{flemma} 
\label{flempropImProjLib} 
 Une matrice de \prn de rang $1$, $P$, a son image libre \ssi il 
existe un vecteur colonne $x$ et un vecteur ligne $y$ tels que $yx=1$ 
et $xy=P$. En outre $x$ et $y$  sont uniques, au produit par une 
unité près, sous la seule condition que $xy=P$. 
\end{flemma}

     
\subsubsection*{Le semi anneau de Grothendieck 
\texorpdfstring{$\GKO\,\gA$ et le groupe de Picard $\Pic\,\gA$}{GKO(A) 
et le groupe de Picard Pic(A)}} 
\addcontentsline{toc}{subsection}{GK0(A) et Pic(A)}

$\GKO\,\gA$ est l'ensemble des classes d'\iso de \mptfs sur $\gA$.
C'est un semi anneau pour les lois héritées de $\oplus$ et 
$\otimes$. 

Puisque $\gA$ est supposé commutatif, le sous semi anneau de  
$\GKO\,\gA$ engendré par $1$ (la classe du \mptf $\gA$) est 
isomorphe à $\NN$, sauf dans le cas où $\gA$ est l'anneau trivial. 

Tout \elt de $\GKO\,\gA$ peut être représenté par une matrice 
\idme à \coes dans~$\gA$.

$\Pic\,\gA$ est le sous ensemble de $\GKO\,\gA$ formé par les
classes d'\iso des \mrcs 1.
Il s'agit d'un groupe pour la multiplication. L'\gui{inverse} de $M$ 
est le dual de $M$. Si $M\simeq \Im\,P$, alors $M\sta\simeq 
\Im\,\tra{P}$. En particulier,
si $P$ est une matrice de \prn de rang 1, $P\otimes \tra{P}$ est une
matrice de \prn dont l'image est un module libre de rang 1.

On peut d'ailleurs vérifier directement cette propriété en 
utilisant la \carn donnée au lemme~\ref{flempropImProjLib}. 

\subsubsection*{Rapport entre \texorpdfstring{$\Pic\,\gA$}{Pic(A)} et 
les classes d'idéaux inversibles} 
\addcontentsline{toc}{subsection}{Rapport entre Pic(A) et les classes 
d'idéaux inversibles}

Un \id $\fa$ de $\gA$  est dit \emph{inversible} s'il existe un \id 
$\fb$ tel que $\fa\fb=a\gA$ où $a$ est un \elt régulier.
Dans ce cas il existe $\xn$ et $\yn$ dans $\gA$ tels que 
$\fa=\gen{\xn}$, \hbox{$\fb=\gen{\yn}$} et $\sum_ix_iy_i=a$. En outre
pour tous $i,j$ il existe un unique $m_{i,j}$ tel que 
\hbox{$y_ix_j=am_{i,j}$} 
On en déduit que la matrice $(m_{i,j})$ est une matrice \idme de rang 1, 
et que son image est isomorphe à $\fa$ en tant que \Amoz.

Deux \ids inversibles $\fa,\fb$ sont isomorphes en tant que \Amos \ssi 
il existe~$a$ et~$b\in\gA$ réguliers tels que $a\fa=b\fb$. Ceci permet de 
définir le groupe des classes d'idéaux inversibles comme 
sous-groupe de $\Pic\,\gA$. En fait la plupart du temps les deux 
groupes coïncident.

Par exemple si $\gA$ 
est intègre%
,
toute matrice $(a_{i,j})$ \idme de rang $1$ a un \elt régulier sur 
la diagonale et les \coes de la ligne correspondante engendrent un \id 
inversible isomorphe à l'image de la matrice.

\subsubsection*{Changement d'anneau de base} 
\addcontentsline{toc}{subsection}{Changement d'anneau de base}

Si l'on a un \homo $\gA\vers{\rho}\gB$, l'extension des scalaires de 
$\gA$ à $\gB$ transforme un
\mptf $M$  sur $\gA$ en un \mptf $\rho\ista(M)$ sur $\gB$. 
Tout \Bmo isomorphe à un tel module $\rho\ista(M)$ est dit 
\gui{étendu} depuis $\gA$.

Du point de vue
matrices de \prnz, cela correspond à considérer la  matrice
transformée par l'\homo $\rho$.

Cela donne un \homo $\GKO\,\rho: \GKO\,\gA\to\GKO\,\gB$.
D'où le \pb qui se pose naturellement: \gui{tout \mptf sur $\gB$ 
provient-il d'un \mptf sur $\gA$?}.
Ou encore: \gui{$\GKO\,\rho$ est il surjectif?}.

Par exemple si $\gZ$ est le sous anneau de $\gA$  engendré par 
$1_\gA$,
on sait que tous les \mrcs sur $\gZ$ sont libres, et la question 
\gui{les \Amos \prcs sont-ils tous étendus depuis $\gZ$?} est 
équivalente à
\gui{tous les \Amos \prcs sont-ils libres?}.

Dans le cas $\gB=\AXm=\AuX$, on a de plus l'\homo d'évaluation en 0,  
$\gB\vers{\theta}\gA$, avec $\theta\circ\rho=\Id_\gA$. 
On en déduit que le \Bmo \ptf $M=M(\uX)$ est étendu \ssi il est 
isomorphe à $M(0)=\theta\ista(M)$. 

En ce qui concerne les matrices de \prnz,
une matrice \idme  $P\in\gB^{n\times n}$ représente un module 
étendu depuis $\gA$ \ssi son image est isomorphe à l'image de 
$P(0)$. 

Si tous les \Bmos \ptfs sont étendus depuis $\gA$ alors $P$ doit 
être semblable à $P(0)$, mais ceci peut s'avérer plus difficile 
à démontrer directement que l'\iso des images. 

Concernant les $\Pic$ on a les deux \homos de groupe
$\Pic\,\gA\vers{\Pic\,\rho}\Pic\,\AuX\vers{\Pic\,\theta}\Pic\,\gA$
qui se composent selon l'identité. Le premier est injectif, le 
second surjectif, et ce sont des \isos \ssi le premier est surjectif, 
\ssi
le second est injectif.

Cette dernière propriété signifie: toute matrice $P(\uX)$ \idme 
de rang 1 sur $\AuX$, vérifiant \gui{$\Im(P(0))$ est libre}, 
vérifie 
elle-même  \gui{$\Im(P(\uX))$ est libre}.

En fait si  $\Im(P(0))$ est libre, alors la matrice diagonale par 
blocs  $\Diag(P(0),0_1)$ est semblable à une matrice de \prn 
standard $\I_{n,1}$. Comme $\Im(\Diag(P(\uX),0_1))$ est isomorphe à 
$\Im\,P(\uX)$, on obtient le résultat qui suit.

\begin{flemma} 
\label{flemPicPic1} \Propeq
\begin{enumerate}
\item L'\homo naturel $\Pic\,\gA\to\Pic\,\AuX$ est un \isoz,
\item Pour toute
matrice  $P(\uX)\in\AuX^{n\times n}=(m_{i,j}(\uX))_{i,j\in 1,\ldots 
,n}$  \idme de rang 1 vérifiant $P(0)=\I_{n,1}$, il existe 
$f_1,\ldots, f_n, g_1,\ldots, g_n\in\AuX$ tels que $m_{i,j}=f_ig_j$ 
pour tous $i,j$.
\end{enumerate}
\end{flemma}

\subsubsection*{Seuls importent les anneaux réduits: 
\texorpdfstring{$\GKO\,\Ared = \GKO\,\gA$}{GKO(Ared)=GK0(A)}} 
\addcontentsline{toc}{subsection}{Seuls importent les anneaux 
réduits}

Nous notons $\Ared$ l'anneau réduit associé à $\gA$, \cad 
$\gA\sur{\sqrt{0}}$.
\begin{fproposition} 
\label{fpropComparRed} 
L'application naturelle $\GKO(\gA)\to\GKO(\Ared)$ est bijective. 
\begin{enumerate}
\item Injectivité:  cela signifie que si deux \mptfs $E,F$  sur 
$\gA$ sont isomorphes sur $\Ared$, ils le sont \egmt sur $\gA$.
\item De manière plus précise si
deux matrices \idmes $P,Q$ de même format sont conjuguées sur 
$\Ared$, elles le sont \egmt sur $\gA$, via un \iso qui relève 
l'\iso
de conjugaison résiduel.
\item Surjectivité:  tout \mptf sur $\Ared$ provient d'un \mptf 
sur~$\gA.$ 
\end{enumerate}
\end{fproposition}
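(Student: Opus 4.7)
L'idée est de démontrer le point \emph{2} en premier (c'est le cœur technique), d'en déduire le point \emph{1} via le fait~\ref{ffactcatmptf} point~\ref{item3ffactcatmptf}, et de traiter le point \emph{3} séparément par une méthode à la Newton. Dans toute la suite, on note $\ov{x}$ l'image de $x\in\gA$ (ou d'une matrice) modulo le nilradical $\sqrt{0}$.

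Pour le point \emph{2}, je partirais d'une matrice $C\in\gA^{n\times n}$ telle que $\ov{C}\,\ov{P}\,\ov{C}^{-1}=\ov{Q}$. Comme $\det(C)$ est inversible modulo $\sqrt{0}$, son complémentaire étant nilpotent, $\det(C)$ est lui-même inversible dans $\gA$, donc $C\in\GL_n(\gA)$. Quitte à remplacer $P$ par $C\,P\,C^{-1}$, on se ramène au cas où $\ov{P}=\ov{Q}$ et $\ov{C}=\In$. Je poserais alors
\[
A \;=\; Q\,P + (\In-Q)(\In-P),
\]
matrice qui vérifie immédiatement $A\,P = Q\,P = Q\,A$ (en utilisant $P^2=P$ et $Q^2=Q$) et $\ov{A}=\In$. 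Cette dernière égalité force $A$ à être inversible dans $\gA$ (pour la même raison que $C$ plus haut), et l'on conclut $A\,P\,A^{-1}=Q$, avec en prime $\ov{A}=\ov{C}$, ce qui assure le relèvement souhaité de l'\iso résiduel.

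Pour le point \emph{1}, j'invoquerais le fait~\ref{ffactcatmptf} point~\ref{item3ffactcatmptf}: si $E\simeq\Im\,P$ et $F\simeq\Im\,Q$ sont isomorphes sur $\Ared$, alors $\Diag(\ov{P},0_m)$ et $\Diag(0_n,\ov{Q})$ sont semblables sur $\Ared$. Le point \emph{2} qui vient d'être établi fournit alors une matrice inversible de $\GL_{n+m}(\gA)$ conjuguant $\Diag(P,0_m)$ à $\Diag(0_n,Q)$, d'où $E\simeq F$ sur $\gA$.

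Pour le point \emph{3} (surjectivité), je relèverais une matrice \idme $\ov{P}_0$ sur $\Ared$ en une matrice $P_0\in\gA^{n\times n}$ quelconque dont les \coes se réduisent sur ceux de $\ov{P}_0$. Soit $\fa$ l'idéal de $\gA$ engendré par les \coes de $P_0^2-P_0$; il est contenu dans $\sqrt{0}$, donc il existe $k$ avec $\fa^{2^k}=0$. L'étape cruciale est l'observation \gui{à la Newton} suivante: si $P$ vérifie $P^2-P\in\fa^{\ell}\,\gA^{n\times n}$, alors $Q:=3P^2-2P^3$ vérifie $Q\equiv P\pmod{\fa^{\ell}}$ et surtout $Q^2-Q$ est multiple (au niveau matriciel) de $(P^2-P)^2$, donc à \coes dans $\fa^{2\ell}$. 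En itérant $k$ fois la transformation $P\leftarrow 3P^2-2P^3$ à partir de $P_0$, on obtient une matrice véritablement \idme sur $\gA$ relevant $\ov{P}_0$.

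La principale subtilité technique est la vérification algébrique $Q^2-Q = -(4P+3)(P^2-P)^2$ (ou une identité analogue) qui fait marcher l'itération de Newton; le reste est essentiellement formel une fois le fait~\ref{ffactcatmptf} en main et la remarque que \gui{inversible modulo $\sqrt{0}$ équivaut à inversible} comprise.
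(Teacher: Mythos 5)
Votre démonstration est correcte et suit essentiellement la même voie que celle du texte : même matrice de conjugaison $A=QP+(\In-Q)(\In-P)$ pour le point \emph{2} (le texte en prend la forme miroir $PQ+(\In-P)(\In-Q)$, ce qui revient au même), même réduction du point \emph{1} au point \emph{2} via le fait~\ref{ffactcatmptf}, et même itération de Newton $P\leftarrow 3P^2-2P^3$ pour le point \emph{3}. Seule l'identité explicite proposée en aparté est inexacte : un calcul direct donne $Q^2-Q=(4P^2-4P-3)(P^2-P)^2$ et non $-(4P+3)(P^2-P)^2$, mais cela ne change rien puisque seule compte la divisibilité par $(P^2-P)^2$, que vous affirmez correctement.
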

\begin{proof}
\emph{2.}  On note $\ov{x}$ l'objet $x$  vu modulo $\sqrt{0}$. Soit 
$C\in\gA^{n\times n}$ une matrice telle que $\ov{C}\, \ov{P}\, {\ov{C}}^{-1}=\ov{Q}$. Puisque $\det(C)$ est inversible dans $\Ared$, 
il est inversible dans $\gA$ et $C\in\GL_n(\gA)$. On a donc 
$\ov{Q}=\ov{C\,P\,C^{-1}}$. Quitte à remplacer $P$ 
par $C\,P\,C^{-1}$ on peut supposer $\ov{Q}=\ov{P}$. Alors $PQ$ code 
une \Ali
de $\Im\,P$ vers $\Im\,Q$ qui donne résiduellement l'identité. De 
même  $(\In-P)(\In-Q)$ code une \Ali de  $\Ker\,P$ vers $\Ker\,Q$ 
qui donne résiduellement l'identité. On considère alors la 
matrice $A=PQ+(\In-P)(\In-Q)$ qui réalise $AQ=PQ=PA$ (\vfn 
immédiate) et $\ov{A}=\In$: ainsi $A$ est inversible et relève 
l'\iso de conjugaison résiduel.

\sni \emph{1.} Pour deux \mptfs résiduellement isomorphes $E\simeq\Im\,P$ et 
$F\simeq\Im\,Q$ on  réalise $E$ et $F$ comme images de matrices 
\idmes de même format et résiduellement conjuguées: 
$\Diag(P,0_m)$ et  $\Diag(0_n,Q)$ avec $\Diag(\ov{P},0_m)$ semblable 
à $\Diag(0_n,\ov{Q})$ (voir le fait \ref{ffactcatmptf}). Puis on applique le point \emph{1.}

\sni \emph{3.} On a la possibilité de relever tout \mptf gr\^ace à la 
méthode de Newton. Plus précisément soit $\fa$ l'\id engendré 
par les \coes de $P^2-P$. Si $\fa$ est contenu dans le nilradical de~$\gA$, il existe $k$ tel que $\fa^{2^k}=0$. Par ailleurs si $Q=3P^2-
2P^3$, alors $Q\equiv P \mod \fa$ et $Q^2-Q$ est multiple de 
$(P^2-P)^2$ donc a ses \coes dans $\fa^2$. Il suffit donc d'itérer 
$k$ fois l'affectation $P\leftarrow 3P^2-2P^3$ pour obtenir le 
résultat souhaité.
\end{proof}

\begin{fcorollary} 
\label{fcorpropComparRed} 
L'\homo canonique $\Pic\,\gA\to \Pic\,\AuX$ est un \iso \ssi l'\homo 
canonique  $\Pic\,\Ared\to \Pic\,\Ared[\uX]$ est un \isoz.
\end{fcorollary}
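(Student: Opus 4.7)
\noindent\emph{Plan de démonstration.}
Mon plan est d'obtenir ce corolaire comme conséquence presque immédiate de la Proposition~\ref{fpropComparRed} en l'appliquant à la fois à l'anneau $\gA$ et à l'anneau $\AuX$.

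Le point clé préliminaire serait l'identification naturelle $(\AuX)\red \simeq \Ared[\uX]$: un polynôme de $\AuX$ est nilpotent \ssi tous ses \coes le sont, et le nilradical de $\AuX$ est donc l'extension à $\AuX$ du nilradical de $\gA$. Modulo cette identification, la Proposition~\ref{fpropComparRed} appliquée à $\gA$ fournit un \iso $\Pic\,\gA\to\Pic\,\Ared$, et appliquée à $\AuX$ fournit un \iso $\Pic\,\AuX\to\Pic\,\Ared[\uX]$.

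Je formerais ensuite le carré commutatif dont les flèches horizontales sont les \homos canoniques $\Pic\,\gA\to\Pic\,\AuX$ et $\Pic\,\Ared\to\Pic\,\Ared[\uX]$ et dont les flèches verticales sont les deux \isos précédents. La commutativité découle de la fonctorialité de $\Pic$ appliquée au carré évident d'\homos d'anneaux (extension à $\AuX$ d'une part, passage au réduit d'autre part). La conclusion est alors immédiate: puisque les deux flèches verticales sont des \isosz, la flèche du haut est un \iso \ssi la flèche du bas en est un.

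L'étape la plus délicate me semble être seulement le contrôle précis de l'identification $(\AuX)\red \simeq \Ared[\uX]$ et de sa compatibilité avec le foncteur $\Pic$, mais il s'agit là de vérifications routinières. Le corolaire est, au fond, une application purement formelle de la Proposition~\ref{fpropComparRed}.
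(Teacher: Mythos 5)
Your proof is correct and follows exactly the route the paper intends (the corollary is stated without proof as an immediate consequence of Proposition~\ref{fpropComparRed}): the identification $(\AuX)\red\simeq\Ared[\uX]$ via $\mathrm{Nil}(\AuX)=\mathrm{Nil}(\gA)[\uX]$, then the commutative square with the two vertical bijections supplied by the proposition. The only detail worth making explicit is that the bijection $\GKO(\gA)\to\GKO(\Ared)$ restricts to a bijection between the subsets $\Pic$, i.e.\ that rank is preserved; this holds because the coefficients of the rank polynomial are idempotents and idempotents lift uniquely modulo the nilradical.
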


\begin{fconvention} 
\label{fconvPicPic} 
Dans la suite nous abrégeons la phrase \gui{l'\homo canonique 
$\Pic\,\gA\to \Pic\,\AuX$ est un \isoz} en disant (par abus) 
\gui{$\Pic\,\gA=\Pic\,\AuX$}.
\end{fconvention}

\subsubsection*{\'Eléments inversibles de 
\texorpdfstring{$\gA[\protect\underline{X}]$}{A[X]}} 
\addcontentsline{toc}{subsection}{\'Eléments inversibles de 
$\gA[X]$}

\begin{flemma} 
\label{flemUnitRedX} 
Si $\gA$ est réduit l'\homo de groupes 
$\gA^{\times}\to(\AuX)^{\times}$ est un \isoz.
Autrement dit si $f(\uX)\in\AuX$ est inversible, alors 
$f=f(0)\in\gA^{\times}$.
\end{flemma}

Il suffit de faire la démonstration en une variable, et elle résulte d'un 
calcul direct: si $f(X)g(X)=1$ avec $\deg(f)\leq m$, $m\geq 1$, on 
montre que le \coe de degré $m$  dans $f$  est nilpotent.
\subsubsection*{Le \tho de Kronecker} 
\addcontentsline{toc}{subsection}{Le \tho de Kronecker}

\begin{ftheorem} 
\label{fthKro} 
Soient $f,g\in\AuX$ et $h=fg$. Soit $a$ un \coe de $f$  et $b$ un \coe 
de $g$, alors $ab$ est entier sur le sous anneau de $\gA$ engendré 
par les \coes de $h$. 
\end{ftheorem}

En utilisant \gui{l'astuce de Kronecker} (remplacer chaque variable 
$X_k$ par $T^{m^k}$ pour un $m$ suffisamment grand) il suffit de le 
montrer pour des \pols en une variable.
Avec deux \pols de degré~$1$ en une variable, on voit le résultat 
à l'{\oe}il nu. Avec deux \pols de degré 2, on voit que ce n'est 
pas si simple. Néanmoins des \prcos existent dans la littérature
(cf. \cite{fEd,fHu}, et pour un  article de synthèse \cite{fCDLQ}).

\section{Théorème de Traverso-Swan. Le cas intègre.} 
\label{fsec2}
\markboth{Anneaux seminormaux}{2. Théorème de Traverso-Swan. Le cas intègre.}

\subsubsection*{La condition est nécessaire: l'exemple de Schanuel} 
\addcontentsline{toc}{subsection}{L'exemple de Schanuel}

On montre que si $\gA$ est réduit et $\Pic\,\gA=\Pic\,\gA[X]$ alors 
$\gA$ est seminormal. 
On utilise la \carn donnée dans le lemme \ref{flempropImProjLib}.

Soient $b,c\in\gA$ réduit avec $b^2=c^3$. Soit $\gB=\gA[a]=\gA+a\gA$ 
un anneau réduit contenant $\gA$ avec $a^3=b, \,a^2=c$.
On considère $f_1=1+aX$, $f_2=cX^2=g_2$ et $g_1=(1-aX)(1+cX^2)$.
On~a $f_1g_1+f_2g_2=1$, donc  la matrice $M(X)$ des $f_ig_j$ est \idme 
de rang $1$.
On vérifie alors sans peine que ses \coes sont dans $\gA$ et que 
$M(0)=\I_{2,1}$.
Son image est libre sur $\gB[X]$. Si elle est libre sur~$\gA[X]$ 
il existe des $f'_i$ et $g'_j$ dans $\gA[X]$ avec $f'_ig'_j=f_ig_j$.
Par unicité $f'_i=uf_i$ avec $u$ inversible dans~$\gA[X]$ donc dans 
$\gA$. Avec $i=1$  on obtient $a\in\gA$.

\noi NB: pour $\gB$ on peut prendre $\left(\aqo{\gA[T]}{T^2-c, T^3-b} 
\right)\red$. Si un $a$ est déjà présent dans $\gA$, on obtient 
par unicité $\gB=\gA$.
\subsubsection*{Cas d'un anneau à pgcd} 
\addcontentsline{toc}{subsection}{Cas d'un anneau à pgcd}

Rappelons qu'un anneau (intègre) à pgcd est un anneau dans lequel
deux \elts arbitraires admettent un plus grand commun diviseur,
\cad une borne inférieure pour la relation de divisibilité.
Rappelons aussi que si $\gA$ est un anneau à pgcd, il en va de 
même pour l'anneau des \polsz~$\AuX$. 
\begin{flemma} 
\label{flemPicGcd} 
Si $\gA$ est un anneau intègre à pgcd, $\Pic\,\gA=\so{1}$.
\end{flemma}
\rem En conséquence $\Pic\,\gA\to \Pic\,\AuX$ est un \isoz.
Notez que le résultat s'applique si $\gA$  est un corps discret.

\begin{proof}
On utilise la \carn donnée dans le lemme \ref{flempropImProjLib}. 
Soit $P=(m_{i,j})$ une matrice \idme de rang 1. 
Puisque $\sum_i m_{i,i}=1$ on peut supposer que $m_{1,1}$ est 
régulier.
Soit $f$ le pgcd des \elts de la première ligne. On a $m_{1,j}=fg_j$
avec le pgcd des $g_j$ égal à 1. Puisque $f$ est régulier et 
$m_{1,1}m_{i,j}=m_{1,j}m_{i,1}$ on obtient $g_1m_{i,j}=m_{i,1}g_j$.
Ainsi $g_1$ divise tous les $m_{i,1}g_j$ donc aussi leur pgcd 
$m_{i,1}$.
On écrit $m_{i,1}=g_1f_i$. Puisque $g_1f_1=m_{1,1}=fg_1$ cela donne 
$f_1=f$. Enfin l'\egt $m_{1,1}m_{i,j}=m_{1,j}m_{i,1}$ donne 
$f_1g_1m_{i,j}=f_1g_jg_1f_i$ puis $m_{i,j}=f_ig_j.$
\end{proof}

\subsubsection*{Cas d'un anneau intègre normal} 
\addcontentsline{toc}{subsection}{Cas d'un anneau intègre normal}
\begin{flemma} 
\label{flemIntegclos} 
Si $\gA$ est intègre et intégralement clos, alors 
$\Pic\,\gA=\Pic\,\AuX$.
\end{flemma}
\begin{proof}
On utilise la \carn donnée au lemme \ref{flemPicPic1}. Soit 
$P(\uX)=(m_{i,j}(\uX))_{i,j=1,\ldots ,n}$ une matrice \idme de rang 
$1$  avec
$P(0)=\I_{n,1}$. Soit $\gK$ le corps des fractions de $\gA$.
Sur $\gK[\uX]$ le module $\Im\,P(\uX)$ est libre et il existe donc 
$f=(f_1(\uX),\ldots ,f_n(\uX))$ et $g=(g_1(\uX),\ldots ,g_n(\uX))$ 
dans
$\gK[\uX]^n$ tels que $m_{i,j}=f_ig_j$ pour tous $i,j$.
En outre puisque $f_1(0)g_1(0)=1$ et puisqu'on peut modifier $f$ et 
$g$ en les multipliant par une unité, on peut supposer que 
$f_1(0)=g_1(0)=1$. Alors puisque $f_1g_j=m_{1,j}$ et vu le \tho de 
Kronecker, les \coes des $g_j$ sont entiers sur l'anneau engendré 
par les \coes des $m_{1,j}$. De même les \coes des $f_i$ sont 
entiers sur l'anneau engendré par les \coes des $m_{i,1}$.
Mais  on suppose $\gA$  intégralement clos, donc les $f_i$ et les 
$g_j$ sont dans $\gA[\uX]$. 
\end{proof}

\subsubsection*{Cas d'un anneau intègre seminormal} 
\addcontentsline{toc}{subsection}{Cas d'un anneau intègre 
seminormal}

Traverso \cite{fTra} avait démontré le théorème dans le cas d'un anneau noethérien réduit
$\gA$ (avec une restriction supplémentaire). Pour le cas intègre sans hypothèse 
noethérienne on peut 
consulter \cite{fQuerre,fBC,fGH}. 

\begin{ftheorem} 
\label{fpropIntSemin} 
Si $\gA$ est intègre et seminormal, alors $\Pic\,\gA=\Pic\,\AuX$.
\end{ftheorem}
\begin{proof}
On commence la démonstration comme celle du lemme \ref{flemIntegclos}.
On a $f_1(\uX),\ldots ,f_n(\uX)$, $g_1(\uX),\ldots ,g_n(\uX)$ dans
$\gK[\uX]^n$ tels que $m_{i,j}=f_ig_j$ pour tous $i,j$. En outre 
$f_1(0)=g_1(0)=1$. On appelle $\gB$ le sous anneau de $\gK$ engendré 
par $\gA$ et par les \coes des $f_i$ et des $g_j$. Alors,
vu le \tho de Kronecker,~$\gB$ est une extension finie de 
$\gA$ (i.e., $\gB$ est un \Amo \tfz). Notre but est de montrer que
$\gA=\gB$.
On appelle  $\fa$ le conducteur de $\gA$ dans $\gB$, \cad
l'ensemble $\so{x\in\gB\,|\,x\gB\subseteq\gA}$. C'est à la fois un 
\id de $\gA$ et de $\gB$. Notre but est maintenant de montrer 
$\fa=\gen{1}$, \cade que $\gC=\gA\sur{\fa}$ est trivial.
Nous avons besoin de lemmes préparatoires.
\begin{flemma} 
\label{flemIntSemin1} 
Si $\gA\subseteq\gB$, $\gA$ seminormal et $\gB$ réduit, alors le  
conducteur $\fa$ de $\gA$ dans $\gB$ est un \id radical de $\gB$.
\end{flemma}
\begin{Proof}{Démonstration du lemme \ref{flemIntSemin1}.}
On doit montrer  que si $u\in\gB$ et $u^2\in\fa$ alors $u\in\fa$. Soit 
donc $c\in\gB$, on doit montrer que  $uc\in\gA$. On sait que 
$u^2c^2\in\gA$. Mais aussi
 $u^3c^3=u^2(uc^3)\in\gA$ puisque $u^2\in\fa$. 
Puisque $(u^3c^3)^2=(u^2c^2)^3$ il existe $a\in\gA$ tel que 
 $a^2=(uc)^2$ et $a^3=(uc)^3$. Comme $\gB$ est réduit cela implique 
$a=uc$, et donc $uc\in\gA$. 
\end{Proof}

\rem La \emph{clôture seminormale} d'un anneau 
$\gA$ dans un suranneau réduit $\gB$ est obtenue en partant de $\gA$ et en ajoutant 
les \elts $x$ de $\gB$  tels que $x^2$ et $x^3$ sont dans l'anneau préalablement construit. Notez que par le fait \ref{ffactRed1}, $x$ est uniquement déterminé par la donnée de $x^2$ et $x^3$.  
La démonstration du lemme précédent peut alors être interprétée comme
une démonstration de la variante suivante.

\begin{flemma} 
\label{flemIntSemin1bis} 
Soient $\gA\subseteq\gB$ réduit, $\gA_{1}$ la
clôture  seminormale de  $\gA$ dans $\gB$, et  
$\fa$ le conducteur de $\gA_{1}$ dans $\gB$. 
Alors $\fa$ est un idéal radical de
$\gB$.
\end{flemma}

\begin{flemma} 
\label{flemIntSemin2} 
Soient $\gA\subseteq\gB$,  $\gB=\gA[\cq]$ réduit fini sur $\gA$ et  
$\fa$ le conducteur de $\gA$ dans $\gB$. On suppose que $\fa$ est un 
idéal radical. Alors $\fa$ est 
égal à $\so{x\in\gA\,|\,xc_1,\ldots ,xc_q\in\gA}$.
\end{flemma}
\begin{Proof}{Démonstration du lemme \ref{flemIntSemin2}.}
En effet si $xc_i\in\gA$ alors  $x^\ell c_i^\ell\in\gA$ pour tout 
$\ell$, et donc pour un $N$ assez grand
$x^N y\in\gA$ pour tout $y\in\gB$, donc $x$ est dans le radical de 
$\fa$ (si $d$ majore les degrés des équations de dépendance 
intégrale des $c_i$ sur $\gA$, on pourra prendre $N=(d-1)q$).
\end{Proof}
\emph{La fin de la démonstration du \tho \ref{fpropIntSemin}}
 est maintenant  donnée en \clamaz.\\
Supposons au contraire que $\fa\neq \gen{1}$. 
On a $\gC=\gA\sur{\fa}\subseteq \gB\sur{\fa}=\gC'$.
Soit alors $\fp$ un \idemi de 
$\gC$, $\fP$ l'idéal correspondant de $\gA$,
$S=\gC\setminus\fp$  la partie complémentaire. 
Puisque $\fp$  
est un \idemiz, et puisque   $\gC$ est réduit, 
$S^{-1}\gC=\gL$ est un corps,
contenu dans l'anneau réduit  $S^{-1}\gC'=\gL'$.\\
Si  $x$ est un objet défini sur $\gA$ notons $\ov{x}$ ce qu'il 
devient après le changement de base  $\gA\to\gL'$.
Le module $\ov{M}$ est défini par la matrice $\ov{P}$ dont les \coes 
sont dans $\gL[\uX]$. Puisque $\gL$ est un corps, le module $\Im\,\ov{P}$ est libre sur $\gL[\uX]$. Cela implique, par unicité (lemme 
\ref{flempropImProjLib}) et vu que $f_1(0)=g_1(0)=1$,
que les~$\ov{f_i}$ et $\ov{g_j}$ sont dans $\gL[\uX]$
(si $u(X)\in\gL[\uX]$ est inversible et $u(0)=1$, alors $u=1$).
Cela signifie qu'il existe $s\in \gA\setminus \fP$  tel que les $sf_i$ et $sg_j$ sont 
à \coes dans $\gA$. D'après le lemme \ref{flemIntSemin2}, ceci 
implique que $s\in\fa$, ce qui est absurde.
\end{proof}

La démonstration donnée ci-dessus pour le \tho  \ref{fpropIntSemin} 
est une simplification des démonstrations existantes dans la littérature. 
Elle n'est cependant pas totalement
\cov et elle ne  traite que le cas intègre.
\subsubsection*{Démonstration \cov (cas seminormal intègre)} 
\addcontentsline{toc}{subsection}{Démonstration \cov }

Nous allons donner maintenant une \prco du \thoz~\ref{fpropIntSemin}.

On commence par remarquer que l'argument par l'absurde dans la démonstration 
classique, peut être interprété comme un argument indirect,
qui prouve que l'anneau $\gA\sur{\fa}$  est trivial en disant, selon 
toute apparence: si l'anneau n'était pas trivial etc\ldots, il 
serait trivial. Mais une fois remis à l'endroit, l'argument prouve 
directement que l'anneau voulu est trivial. On pourra lire à ce 
sujet le petit article de Richman sur l'anneau trivial (\cite{fRic}).

Outre cette remarque plutôt anodine (le renversement d'une démonstration 
directe en une démonstration par l'absurde est très banal en \clamaz),
il nous faut un lemme qui permet d'éliminer l'usage de l'\idep 
premier minimal \emph{purement idéal} qui intervient dans la démonstration 
classique. Dans le processus de décryptage, ceci est le point le 
plus délicat.   

Ce lemme dont l'énoncé est un peu déroutant a la signification 
intuitive suivante: \\
\emph{Soit $\gC$ un anneau réduit et $P$  un module \pro
de rang 1 sur $\gC[\uX]$; si $\gC$ n'est pas trivial, il doit y avoir 
une localisation non triviale $S^{-1}\gC$ de $\gC$ pour laquelle  $P$ 
devient libre.}

En \clama la réponse est immédiate: la localisation en un \idep 
minimal. C'est l'argument qui a été utilisé dans la démonstration  du 
cas intègre, avec l'anneau $\gC=\gA/\fa$.

Le lemme sous sa forme intuitive 
\gui{n'est pas vrai} d'un point de vue \cofz.
Mais fort heureusement c'est sa contraposée qui nous intéresse:\\
\emph{Soit $\gC$ un anneau réduit et $P$  un module \pro
de rang 1 sur $\gC[\uX]$; si toute localisation  $S^{-1}\gC$ de~$\gC$ 
pour laquelle  $P$ devient libre est triviale, c'est que $\gC$ 
lui-même est trivial.}\\
Et elle \gui{est vraie} au sens des \comaz, \cad 
qu'elle nous donne un \algoz!  

En fait nous utiliserons la version précise suivante dans laquelle
seules interviennent des localisations en un seul élément.

Voici LE lemme crucial.  

\begin{flemma} 
\label{flemThierry} \emph{(lemme d'élimination de l'idéal premier 
minimal)}\\
Soit $\gC$  un anneau réduit et $P=(m_{i,j})\in\gC[\uX]^{n \times 
n}$ une matrice \idme de rang 1 
telle que $P(0)=\I_{n,1}$. Supposons que l'implication suivante soit 
satisfaite: 

\smallskip \centerline{$\forall a\in\gC$, si $\Im\,P$ 
est libre sur $\gC[1/a][\uX], $ alors $a=0$.}

\smallskip \noindent  Alors $\gC$ est trivial, \cad $1=0$  dans $\gC$.
\end{flemma}
\begin{Proof}{Démonstration que le lemme \ref{flemThierry} implique le 
\thoz~\ref{fpropIntSemin}. }
Nous pouvons reprendre à très peu près la fin
de la démonstration du \thoz~\ref{fpropIntSemin}, qui utilisait un \idep 
minimal $\fp$. La lectrice constatera que gr\^ace AU lemme, on  
remplace simplement la localisation en $\fp$ par la localisation en un 
\elt $a$. \\
On reprend la démonstration du \tho à l'endroit où elle devenait non 
\covz. On a $\gC=\gA\sur{\fa}\subseteq \gB\sur{\fa}=\gC'$, 
deux anneaux réduits.
Pour montrer que  $\gC$ est 
trivial, il suffit de montrer que  $\gC$ vérifie, avec la matrice 
$P$ mod $\fa$, les hypothèses DU lemme.\\
Considérons donc $a\in\gA$ tel que  $\Im\,P$ 
soit libre sur $\gC[1/a][\uX]$. \\
Notons $\gC[1/a]=\gL\subseteq  \gC'[1/a]=\gL'$.\\
Si  $x$ est un objet défini sur $\gA$ notons $\ov{x}$ ce qu'il 
devient après le changement de base 
$\gA\to\gL'$.\\
Le module $\ov{M}$ est libre sur $\gL[\uX]$ et cela implique, par 
unicité (lemme \ref{flempropImProjLib}), vu que $f_1(0)=g_1(0)=1$ et 
que $\gL$ est réduit,
que les $\ov{f_i}$ et $\ov{g_j}$ sont dans $\gL[\uX]$ 
(tenir compte du lemme~\ref{flemUnitRedX}).\\
Cela signifie qu'il existe $N\in \NN$  tel que les $a^Nf_i$ et 
$a^Ng_j$ sont à \coes dans $\gA$. D'après les lemmes~\ref{flemIntSemin1} et~\ref{flemIntSemin2}, ceci implique que $a\in\fa$, 
donc $a=0$ dans
$\gC$.
\end{Proof}

\begin{Proof}{Démonstration du lemme \ref{flemThierry}.}
Une démonstration classique serait la suivante. \\
Supposons $\gC$ non trivial et soit $\fp$ un idéal premier 
minimal.\\
 Puisque $\gC$ est réduit, $\gC_\fp$ est un corps. Donc  $\Im\,P$ 
devient libre sur $\gC_\fp[\uX]$. Cela implique qu'il existe un 
$a\notin\fp$ tel que   $\Im\,P$ 
devient libre sur $\gC[1/a][\uX]$. 
Donc $a=0$ ce qui est une contradiction.\\
On a un lemme d'élimination de l'\idep minimal. Mais la démonstration du 
lemme d'élimination est une démonstration par l'absurde qui utilise un 
\idep minimal! 
\emph{N'est-ce pas une mauvaise plaisanterie?}
Non, car la démonstration du lemme peut être relue en utilisant
l'\idep minimal de manière \emph{purement idéale}, de fa\c{c}on 
dynamique. Voici ce que cela donne.\\
Imaginons que l'anneau $\gC$ soit un corps, \cad que l'on ait déjà 
localisé en un premier minimal. \\
Alors les $f_i$ et $g_j$ 
sont calculés selon un \algo que l'on déduit des \prcos données 
auparavant pour le cas des corps.\\ 
Cet \algo utilise la disjonction \gui{$a$ est nul ou $a$ est 
inversible},
pour les \elts $a$ qui sont produits par l'\algo à partir des
\coes des $m_{i,j}$.  Comme $\gC$ est seulement un anneau réduit,
sans test d'égalité à $0$ ni test d'inversibilité, l'\algo 
pour les corps, si on l'exécute avec $\gC$, doit être remplacé 
par un arbre dans lequel on ouvre deux branches chaque fois
qu'une question \gui{$a$ est-il nul ou inversible?} est posée par 
l'\algoz.\\
Nous voici en face d'un arbre, gigantesque, mais fini. 
Disons que systématiquement on a mis la branche \gui{$a$ inversible}  
à gauche, et la branche \gui{$a=0$ à droite}. 
Regardons ce qui se passe dans la branche d'extrême gauche. \\
On a inversé successivement $a_1$, \ldots , $a_n$ et le module
$P$  est devenu libre sur
$\gC[1/(a_1\cdots a_n)][\uX]$. 

\noindent \emph{Conclusion: dans l'anneau $\gC,$ on a $a_1\cdots 
a_n=0$.}

\noindent Remontons d'un cran. \\
Dans l'anneau $\gC[1/(a_1\cdots a_{n-1})]$, nous savons que $a_n=0$. 
\\
La branche de gauche n'aurait pas dû être ouverte. 
Regardons le calcul dans la branche  $a_n=0$.\\
Suivons à partir de là
la branche d'extrême gauche.\\
On a inversé  $a_1$, \ldots , $a_{n-1}$, puis, disons $b_1,\ldots 
,b_k$ (si $k=0$ convenons que $b_k=a_{n-1}$).\\
Et le module
$P$  est devenu libre sur
$\gC[1/(a_1\cdots a_{n-1} b_1\cdots  b_k)][\uX]$.

\noindent \emph{Conclusion: dans l'anneau $\gC,$ on a $a_1\cdots a_{n-
1} b_1\cdots  b_k=0$.}  Remontons d'un cran: $b_k=0$, la branche de 
gauche n'aurait pas dû être ouverte. 
Regardons le calcul dans la branche  $b_k=0$.\ldots 

\noindent \emph{Et ainsi de suite.} Quand on poursuit le processus 
jusqu'au bout,
on se retrouve à la racine de l'arbre avec le module $P$ libre sur
$\gC[\uX]=\gC[1/1][\uX]$. Donc $1=0$.
\end{Proof}

En utilisant le lemme \ref{flemIntSemin1bis} à la place du lemme \ref{flemIntSemin1} on obtiendra le résultat suivant, plus précis
que le \thoz~\ref{fpropIntSemin}.

\begin{ftheorem} 
\label{fpropIntSeminBis} 
Si $\gA$ est un anneau intègre seminormal et $M$ un module
\pro de rang $1$ sur $\AuX$, il existe $c_{1},\ldots,c_{m}$ dans le corps des fractions de $\gA$ tels que:
\begin{enumerate}
\item $c_{i}^2$ et $c_{i}^3$ sont dans $\gA[(c_{j})_{j<i}]$ pour $i=1,\ldots,m$,
\item $M$ est libre sur $\gA[(c_{j})_{j\leq m}][X]$.
\end{enumerate}
\end{ftheorem}

\newpage
\section*{Annexe: anneaux \zeds réduits} 
\addcontentsline{toc}{section}{Annexe: anneaux \zeds réduits}
\label{fAnnexe}
\markboth{Anneaux seminormaux}{Annexe: anneaux \zeds réduits}
\setcounter{section}{1}
\setcounter{ftheorem}{0}

\def\thesection{\Alph{section}}

Dans cette annexe, nous donnons quelques bases de la théorie des
anneaux dimensionnels réduits, qui sont de bons substituts au corps.

Ceci permet d'obtenir le \tho de Traverso-Swan dans le cas général 
d'un anneau seminormal non 
néces\-sai\-rement intègre.

En outre la démonstration du lemme d'élimination de l'\idep premier minimal 
peut être débarrassée de l'arbre gigantesque
qui pouvait faire peur. 
Celui-ci est caché dans les \idms
et la démonstration semble plus présentable (mais c'est la même).

\medskip 
\rem L'idée de remplacer le corps des fractions de $\gA$ par un 
anneau \zed réduit contenant $\gA$ n'est pas dans \cite{fSwan}: Swan 
utilise
des arguments nettement plus sophitiqués pour ramener le cas 
général, non pas au cas intègre, mais au cas noethérien.
La démonstration dans \cite{fcoq} opère donc des simplifications très 
nettes par rapport à la démonstration classique initiale. En outre le
\tho est nouveau dans le sens qu'il donne un \algo là où 
auparavant, il y avait une affirmation purement abstraite.

\subsection*{A. Quelques faits de base}
\addcontentsline{toc}{subsection}{A. Quelques faits de base}

On dit qu'un anneau est \emph{\zedz} lorsqu'il vérifie l'axiome 
suivant:
\begin{equation} \label{feqZed}
\forall x\in \gA~\exists a\in\gA~\exists d\in
\NN\quad \quad x^{d}=ax^{d+1}
\end{equation}

Dans le cas réduit $d=1$  suffit car $x^d(1-xa)=0$ implique $x(1-
xa)=0.$   

Dans un anneau commutatif $\gC$, deux \elts $a$ et $b$ sont dits 
\emph{quasi inverses}
si l'on a:
              $$a^2b=a,\quad \quad  b^2a=b$$ 
On dit aussi que $b$ est \emph{le} quasi inverse de $a$. On vérifie 
en effet qu'il est unique: si  $a^2b=a=a^2c$,
 $b^2a=b$  et  $c^2a=c$, alors, puisque $ab=a^2b^2$, $ac=a^2c^2$ et   
$a^2(c-b)=a-a=0$, on obtient
$$c-b=a(c^2-b^2)=a(c-b)(c+b)=a^2(c-b)(c^2+b^2)=0$$

Par ailleurs si $x^2y=x$, on vérifie que $xy^2$ est quasi inverse de 
$x$. Ainsi:

\begin{ffact} 
\label{ffactZDRQI} 
Un anneau est \zed réduit \ssi tout \elt admet un quasi inverse. 
\end{ffact}

\smallskip De tels anneaux sont aussi qualifiés d'\emph{absolument 
plats} ou encore de \emph{von Neuman réguliers} (cette dernière 
expression
est surtout utilisée dans le cas non commutatif, avec les 
équations $aba=a$ et $bab=b$).

Les anneaux \zeds réduits peuvent donc être vus comme des anneaux
munis qu'une loi unaire supplémentaire $a\mapsto a\bl$ qui doit 
vérifier les axiomes
\begin{equation} \label{feqAxqiv}
a^2\,a\bl=a, \,\,\,\,a\,(a\bl)^2=a\bl.
\end{equation}

Ceux-ci impliquent notamment, en posant $e_a=aa\bl,$
\begin{equation} \label{feqQIV}
\left.\begin{array}{lll} 
 e_a^2=e_a,& e_aa=a,& e_aa\bl=a\bl,\\ 
(a\bl)\bl=a,& (ab)\bl=a\bl\, b\bl,& 0\bl = 0, \\ 1\bl=1,&  
(x\,\,\mathrm{r\aigu egulier}\,\Leftrightarrow \,x\,x\bl=1),
&(x\,\,\mathrm{\idm}\,\Leftrightarrow \,x=x\bl).
\end{array}
\right\}
\end{equation}

On en déduit facilement:

\begin{ffact} 
\label{ffactZRD} 
Un anneau est \zed réduit \ssi tout \itf est engendré par un 
\idmz.
\end{ffact}

La notion d'anneau \zed réduit est \emph{la bonne généralisation 
équationnelle} de la notion de corps. La notion de corps ne peut pas 
être définie de manière purement équationnelle, mais
un corps n'est rien d'autre qu'un anneau \zed réduit \emph{connexe} 
(\cad avec $0$ et $1$ comme seuls \idmsz).

\begin{flemma} 
\label{flemzedred0} 
Soit $\gA\subseteq\gC$ avec $\gC$ \zed réduit et $a\in\gC$. Notons 
$e_a=aa\bl$.
\begin{enumerate}
\item $e_a$ est l'unique \idm de $\gC$ qui vérifie 
$\gen{a}=\gen{e_a}$. En outre $\Ann_\gC(a)=\Ann_\gC(e_a)=\gen{1-e_a}$
\item $\gC=e_a\gC\oplus(1-e_a)\gC$ avec  
$e_a\gC\simeq\gC[1/e_a]\simeq\aqo{\gC}{1-e_a}$ 
et  $(1-e_a)\gC\simeq\aqo{\gC}{e_a}$ \\
(NB: l'\id $e_a\gC$ n'est pas un sous anneau, mais c'est un anneau 
avec $e_a$ pour \elt neutre multiplicatif).
\item \label{fitem3lemzedred0} Dans $e_a\gC$, $a$ est inversible et dans $\aqo{\gC}{e_a}$, $a$ 
est nul.
\item Si $a\in\gA$, alors $e_a\gA[a\bl]\simeq \gA[1/a]$.
\item \label{fitemlemzedred0} Plus généralement, avec $a,b,c\in\gA$
on a  $(e_ae_be_c)\gA[a\bl,b\bl,c\bl]\simeq \gA[1/(abc)]$.
\item \label{fitem2lemzedred0} Si en outre $abc=0$, alors  
$(e_ae_b)\gA[a\bl,b\bl,c\bl]\simeq \gA[1/(ab)]$.
\end{enumerate}
\end{flemma}
\begin{proof}
Les trois premiers points sont faciles et classiques. \\
Montrons le point \emph{\ref{fitemlemzedred0}}.
Dans l'anneau $\gB=(e_ae_be_c)\gA[a\bl,b\bl,c\bl]$, $abc$  est 
inversible, d'inverse
$a\bl b\bl c\bl$. Donc l'\homo composé  
$$
\psi\,:\,\gA\vers{j}\gA[a\bl,b\bl,c\bl]\vers{x\mapsto e_ae_be_cx} 
\gB$$ se factorise avec un unique $\theta$ comme suit 
$$
\gA\vers{\pi}\gA[1/(abc)]\vers{\theta}\gB
.$$ 
Puisque $\gA\subseteq\gC$, $j$ est injective 
et l'on peut identifier $x\in\gA$ et $j(x)$. 
L'\homo $\theta$ est surjectif parce que
$\theta(1/abc)= a\bl b\bl c\bl=u$ et dans $\gB$, $a\bl=bcu,\, 
b\bl=acu, \,c\bl=abu$. Par ailleurs $\Ker\,\pi=\Ann_\gA(abc)\subseteq 
\Ker\,\psi $ et si
$x\in\Ker\,\psi$,  alors $ e_ae_be_cx=e_{abc}x=0$, donc $abcx=0$. \\
Montrons le point \emph{\ref{fitem2lemzedred0}}. Puisque $abc=0$, 
$0=e_{abc}=e_ae_be_c$ et dans $(e_ae_b)\gA[a\bl,b\bl,c\bl]=\gB_1$ on a
$c\bl=e_ae_bc\bl=e_ae_b(e_cc\bl)=0$ donc 
$\gB_1=(e_ae_b)\gA[a\bl,b\bl]$ et l'on est ramené au point 
précédent.
\end{proof}

Naturellement les deux derniers points sont plus généraux et 
s'étendent avec un nombre fini arbitraire d'\elts de $\gA$.

Une signification possible du lemme est de considérer qu'il
formalise sous une forme un peu plus abstraite ce qui se passe 
lorsque l'on fait des calculs de manière dynamique dans un anneau 
réduit en \gui{faisant comme si} c'était un sous anneau d'un 
corps.
Gr\^ace au point \emph{\ref{fitem3lemzedred0}}, ce calcul dynamique est possible (sous réserve 
de l'existence de $\gC$). Gr\^ace aux derniers points,
on ramène les calculs dynamiques correspondant à la localisation 
en un \idep minimal à des calculs dans des localisés de $\gA$ 
obtenus en inversant un seul \eltz.

\setcounter{section}{2}
\setcounter{ftheorem}{0}
\subsection*{B. Plongement dans un anneau \zed réduit} 
\addcontentsline{toc}{subsection}{B. Plongement dans un anneau \zed 
réduit}

Puisque la notion d'anneau \zed réduit est purement équationnelle, 
l'algèbre universelle nous dit que tout anneau commutatif engendre
un anneau \zed réduit (cela fournit le foncteur adjoint au foncteur 
d'oubli). Nous voulons voir que dans le cas d'un anneau réduit 
$\gA$, l'\homo de $\gA$ vers le  \zed réduit qu'il engendre est 
injectif. Cela nécessite de se fatiguer un petit peu.

\begin{flemma} 
\label{flem1Zedred1} 
Si $\gA\subseteq\gC$ avec $\gC$ \zed réduit, et si nous notons 
$x\bl$ le quasi inverse de $x$, alors
$\gA[(a\bl)_{a\in\gA}]$ est \zed  (c'est donc le sous anneau \zed de 
$\gC$ engendré par $\gA$).\\
Variante: si  $\gA\subseteq\gB$ réduit, et si chaque $a\in\gA$ admet 
un quasi inverse $a\bl$ dans $\gB$, l'anneau $\gA[(a\bl)_{a\in\gA}]$ 
est \zedz.
\end{flemma}
\begin{proof}
On doit démontrer que tout \elt de $\gA[(a\bl)_{a\in\gA}]$ admet un 
quasi inverse. Puisque $(ab)\bl=a\bl b\bl$ tout \elt de 
$\gA[(a\bl)_{a\in\gA}]$ s'écrit sous forme $\sum a_i b_i\bl$ avec $ 
a_i, b_i\in\gA$.
Par ailleurs $a_i b_i\bl =a_i b_i\bl r_i$  avec $r_i=a_i a_i \bl$ 
idempotent. Par ailleurs étant donnés des \idms $r_1,\ldots ,r_k$ 
l'algèbre de Boole qu'ils engendrent contient un \sfio  $e_1,\ldots 
,e_n$ tel que chaque $r_i$ soit la somme des $e_j$ multiples de $r_i$ 
($e_jr_i=e_j$). 
Enfin si $e_1,\ldots,e_n$ est un \sfio  dans $\gC$, si $a_1,\ldots 
,a_n,b_1,\ldots ,b_n\in\gA$, si\quad 
 $c=\sum_{i=1}^na_ib_i\bl e_i$ et $c'=\sum_{i=1}^na_i\bl b_ie_i$, 
alors $c^2c'=c$ et $c'^2c=c'$, donc $c'=c\bl$.
\end{proof}

\begin{flemma} 
\label{flem1Zedred2} 
Soit $\gA$ un anneau réduit et $a\in\gA$. 
Soit $\gB=\aqo{\gA[T]}{aT^2-T,a^2T-a}$ et $\gC=\gB\red$.
Soit $a\bl$ l'image de $T$ dans $\gC$. Alors
\begin{enumerate}
\item $\gC\simeq (\aqo{\gA}{a})\red\times \gA[1/a]$ et l'\homo naturel 
$\gA\to\gC$ est injectif (on identifie $\gA$ à un sous anneau de 
$\gC$).
\item $a\bl$ est quasi inverse de $a$ dans $\gC.$
\item Pour tout \homo $\gA\vers{\varphi}\gA'$ tel que $\varphi(a)$ 
admet un quasi inverse dans $\gB$, il existe un unique \homo 
$\gC\vers{\theta} \gA'$ tel que l'\homo composé 
$\gA\to\gC\vers{\theta} \gA'$ soit égal à $\varphi$.
\end{enumerate}
\end{flemma}

La démonstration n'offre pas de difficulté et est laissée au 
lecteur. Le corolaire suivant est une conséquence de la 
propriété d'unicité forte donnée dans le lemme.

\begin{fcorollary} 
\label{fcorlem1Zedred2} 
Notons $\gA_{\so{a}}$ l'anneau construit au lemme précédent. 
Soient $a_1,\ldots, a_n$ dans $\gA$ alors l'anneau
obtenu en répétant la construction pour chacun des $a_i$,
ne dépend pas, à \iso unique près, de l'ordre dans lequel 
on prend les $a_i$ pour faire la construction.
\end{fcorollary}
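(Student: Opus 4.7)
Le plan est d'utiliser la propriété universelle forte donnée au point 3 du lemme \ref{flem1Zedred2}, appliquée itérativement. Pour toute permutation $\sigma$ de $\{1,\ldots,n\}$, notons provisoirement
$$\gA_\sigma \;=\; (\cdots((\gA_{\so{a_{\sigma(1)}}})_{\so{a_{\sigma(2)}}})\cdots)_{\so{a_{\sigma(n)}}}.$$
L'observation cruciale est que, par construction, l'image de chaque $a_i$ dans $\gA_\sigma$ y admet un quasi inverse, et ce quelle que soit la permutation $\sigma$.

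Soit $\tau$ une autre permutation. Je construirais un \homo canonique $\Phi_{\tau,\sigma}\colon\gA_\tau\to\gA_\sigma$ prolongeant l'inclusion naturelle $\gA\to\gA_\sigma$, par récurrence sur le nombre d'étapes effectuées dans $\gA_\tau$. Partant de $\gA\to\gA_\sigma$, puisque $a_{\tau(1)}$ admet un quasi inverse dans $\gA_\sigma$, le point 3 du lemme \ref{flem1Zedred2} livre un unique \homo $\gA_{\so{a_{\tau(1)}}}\to\gA_\sigma$ prolongeant cette inclusion. À l'étape suivante, $a_{\tau(2)}$ admet lui aussi un quasi inverse dans $\gA_\sigma$, et la même propriété universelle donne un unique prolongement au cran suivant. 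Au bout de $n$ étapes, on obtient $\Phi_{\tau,\sigma}$, unique parmi les prolongements de l'inclusion de $\gA$.

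Par symétrie, on dispose aussi d'un unique $\Phi_{\sigma,\tau}\colon\gA_\sigma\to\gA_\tau$ prolongeant l'inclusion. Le composé $\Phi_{\sigma,\tau}\circ\Phi_{\tau,\sigma}$ est un \endo de $\gA_\tau$ prolongeant $\Id_\gA$; l'unicité de la propriété universelle, appliquée $n$ fois en partant de $\Id_\gA$, force ce composé à être $\Id_{\gA_\tau}$. Même chose dans l'autre sens. On en déduit que $\Phi_{\tau,\sigma}$ est un \iso et qu'il est l'unique \iso $\gA_\tau\to\gA_\sigma$ qui prolonge l'identité de $\gA$.

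Le seul point à surveiller à chaque étape de l'itération est que l'\elt considéré admet bien un quasi inverse dans la cible; c'est immédiat car la cible $\gA_\sigma$ contient dès le départ, par construction, un quasi inverse de chaque $a_i$. Il n'y a donc pas d'obstacle véritable: toute la force provient de la clause d'unicité dans la propriété universelle du lemme \ref{flem1Zedred2}, qui transforme la question d'indépendance de l'ordre en une vérification mécanique par récurrence.
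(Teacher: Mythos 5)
Votre démonstration est correcte et suit essentiellement la même voie que l'article : celui-ci se borne à dire que le corolaire est «~une conséquence de la propriété d'unicité forte donnée dans le lemme~», et votre argument itératif (construction des morphismes $\Phi_{\tau,\sigma}$ par la propriété universelle, puis identification des composés avec l'identité par unicité) en est précisément le développement attendu.
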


Par exemple il existe un unique $\gA$-\homo de 
$((\gA_{\so{a}})_{\so{b}})_{\so{c}}$ dans 
$((\gA_{\so{c}})_{\so{b}})_{\so{a}}$ et c'est un \isoz.
Le lemme \ref{flem1Zedred2} et le corolaire \ref{fcorlem1Zedred2} ont 
pour conséquence immédiate le \tho suivant.

\begin{ftheorem} 
\label{fthAnnexe1} 
Soit $\gA$ un anneau réduit. Considérons l'anneau $\wh{\gA}$ 
obtenu comme limite inductive en itérant la construction du lemme 
\ref{flem1Zedred2}. C'est un anneau \zed réduit et l'\homo naturel 
$\gA\to \wh{\gA}$ est injectif. En outre
cet anneau est \emph{l'anneau \zed réduit engendré par~$\gA$}, au 
sens suivant: pour tout anneau \zed réduit $\gA'$, tout \homo 
$\gA\vers{\varphi }\gA'$ se factorise de manière unique via l'\homo
naturel $\gA\to\wh{\gA}$.
\end{ftheorem}

En bref:
\begin{ftheorem} 
\label{fthRedZed} 
Tout anneau réduit $\gA$ est contenu dans un anneau \zed réduit
\hbox{$\gC=\gA[(a\bl)_{a\in\gA}]$}.
\end{ftheorem}

\setcounter{section}{3}
\setcounter{ftheorem}{0}
\subsection*{C. Anneaux \zeds réduits et corps} 
\addcontentsline{toc}{subsection}{C. Anneaux \zeds réduits et corps}

Nous avons déjà dit que la notion d'anneau \zed réduit est 
\emph{la bonne généralisation équationnelle} de la notion de 
corps. Cela signifie en particulier que toute conséquence 
équationnelle de la théorie des corps est en fait une  
conséquence équationnelle de la théorie des anneaux \zeds 
réduits.

De manière informelle on peut énoncer un principe local-global 
\elr général qui s'avère en pratique assez efficace.

\medskip \noindent 
{\bf Machinerie locale-globale \elrz:     
des corps discrets aux anneaux \zeds réduits.
}\label{fmlge}
{\it La plupart des \algos qui fonctionnent avec les corps discrets  
peuvent \^{e}tre modifiés de manière à fonctionner avec les 
anneaux \zeds réduits, en cassant l'anneau
en deux morceaux chaque fois que l'\algo écrit pour les corps 
discrets utilise le test
\gui{cet \elt est-il nul ou inversible?}. Dans le premier morceau 
l'\elt en question est nul, dans le second il est inversible.}

\medskip 
Nous avons mis \gui{la plupart} plutôt que \gui{tous} dans la mesure 
où 
l'énoncé du résultat de l'\algo pour les corps discrets doit 
être écrit sous une forme où n'apparait pas qu'un corps 
discret est connexe.

\medskip L'application du principe précédent permet d'obtenir le
\tho \ref{fthZedLib} à partir du lemme \ref{flemPicGcd}, dès que l'on 
s'est convaincu que ce dernier donne un \algo pour les corps discrets.

\begin{ftheorem} 
\label{fthZedLib} 
Si $\gC$ est  un anneau \zed réduit, tout \mrc $1$
sur $\gC[\uX]$ est libre.
\end{ftheorem}

Pour la lectrice sceptique, nous donnons quelques détails
dans l'annexe E. 

\setcounter{section}{4}
\setcounter{ftheorem}{0}
\subsection*{D. Théorème de Traverso-Swan. Cas général.} 
\addcontentsline{toc}{subsection}{D. Traverso-Swan: cas général}
\begin{Proof}{Nouvelle \prco du lemme~\ref{flemThierry}.}
D'après les \thos \ref{fthRedZed} et \ref{fthZedLib} il existe un
anneau \zed réduit $\gC=\gA[(a\bl)_{a\in\gA}]\supseteq\gA$ avec  
$\Im\,P$ 
libre sur $\gC[\uX]$. Cette dernière propriété reste vraie pour 
un anneau $\gB\subseteq\gC$  engendré par un nombre fini de quasi 
inverses $a_1\bl,\ldots ,a_r\bl$ d'\elts de $\gA$. Nous écrivons 
$e_i=a_ia_i\bl$ de sorte que $e_i$ est un \idm
tel que $e_ia_i=a_i$ et $e_ia_i\bl=a_i\bl$. \'Ecrivons aussi $e'_i=1-
e_i$. Pour simplifier prenons $r=3$  et il sera clair que l'argument 
est général. On décompose l'anneau $\gB$  en un produit de $2^r$ 
anneaux, ou de manière équivalente en une somme directe de $2^r$ 
\ids
\begin{equation}  \label{feqlemThierry}
\gB=e_1e_2e_3\gB\oplus e_1e_2e'_3\gB\oplus e_1e'_2e_3\gB\oplus 
e_1'e_2e_3\gB\oplus e_1e'_2e'_3\gB\oplus e'_1e_2e'_3\gB\oplus 
e'_1e'_2e_3\gB\oplus e'_1e'_2e'_3\gB  
\end{equation}
D'après le point \emph{\ref{fitemlemzedred0}} du lemme \ref{flemzedred0}
$$e_1e_2e_3\gB\simeq e_1e_2e_3\gA[a_1\bl, a_2\bl, a_3\bl)] 
\simeq\gA[1/(a_1a_2a_3)]$$
Puisque le module  $\Im\,P$  est libre sur $\gB[\uX]$, il l'est sur 
chacune des $2^r$ composantes, et donc en particulier
sur $e_1e_2e_3\gB[\uX]\simeq\gA[1/(a_1a_2a_3)][\uX]$. D'après la 
propriété requise dans le lemme, on obtient \hbox{$a_1a_2a_3=0$}, donc 
$e_1e_2e_3=0$,  $e_1e_2e_3'=e_1e_2$, etc\ldots , 
et la décomposition (\ref{feqlemThierry}) devient
\begin{equation} \label{feqlemThierry2}
\gB= e_1e_2\gB\oplus e_1e_3\gB\oplus e_2e_3\gB\oplus 
e_1e'_2e'_3\gB\oplus e'_1e_2e'_3\gB\oplus e'_1e'_2e_3\gB\oplus 
e'_1e'_2e'_3\gB  
\end{equation}
D'après le point \emph{\ref{fitem2lemzedred0}} du lemme \ref{flemzedred0}  on~a  
$e_1e_2\gB\simeq\gA[1/(a_1a_2)]$ et on en déduit $a_1a_2=0$, donc 
\hbox{$e_1e_2=0$}, \hbox{$e_1e'_2=e_1$}, $e'_1e_2=e_2.$ De même $a_1a_3=0=e_1e_3$, 
$a_2a_3=0=e_2e_3$ et finalement $e_1e'_2e'_3=e_1$, $e'_1e_2e'_3=e_2$, 
$e'_1e'_2e_3=e_3$. 
On obtient une nouvelle décomposition
\begin{equation} \label{feqlemThierry3}
\gB=  e_1\gB\oplus e_2\gB\oplus e_3\gB\oplus e'_1e'_2e'_3\gB  
\end{equation}
Au bout du compte tous les $a_i$ sont nuls et $\gB=\gA=\gA[1/1]$ ce 
qui permet de conclure \hbox{que $1=0$} dans $\gA$.
\end{Proof}

\begin{ftheorem} 
\label{fthTSC}\emph{(Traverso-Swan-Coquand)}\\ 
Si $\gA$ est un anneau seminormal, alors $\Pic\,\gA=\Pic\,\AuX$.\\
Plus précisément pour toute matrice $P\in\AuX^{n\times n}$ \idme 
de rang 1 sur $\AuX$  vérifiant \hbox{$P(0)=\I_{n,1}$}, 
on peut construire un vecteur colonne  $f\in\AuX^{n\times 1}$  et un 
vecteur ligne  $g\in\AuX^{1\times n}$ tels que $P=fg$.
\end{ftheorem}
\begin{proof}
On reprend mutatis mutandis la démonstration donnée dans le cas intègre.
Pour le lecteur sceptique voici ce que cela donne.\\
On utilise la \carn donnée au lemme \ref{flemPicPic1}. Soit 
$P(\uX)=(m_{i,j}(\uX))_{i,j=1,\ldots ,n}$ une matrice \idme de rang 
$1$  avec
$P(0)=\I_{n,1}$. Soit $\gK$ un anneau \zed réduit contenant $\gA$.
Sur $\gK[\uX]$ le module $\Im\,P(\uX)$ est libre et il existe donc 
$f=(f_1(\uX),\ldots ,f_n(\uX))$ et $g=(g_1(\uX),\ldots ,g_n(\uX))$ 
dans
$\gK[\uX]^n$ tels que $m_{i,j}=f_ig_j$ pour tous $i,j$.
En outre puisque $f_1(0)g_1(0)=1$ et puisqu'on peut modifier $f$ et 
$g$ en les multipliant par une unité, on peut supposer que 
$f_1(0)=g_1(0)=1$. Alors puisque $f_1g_j=m_{1,j}$ et vu le \tho de 
Kronecker, les \coes des $g_j$ sont entiers sur l'anneau engendré 
par les \coes des $m_{1,j}$. De même les \coes des $f_i$ sont 
entiers sur l'anneau engendré par les \coes des $m_{i,1}$.\\
On appelle $\gB$ le sous anneau de $\gK$ engendré par $\gA$ et par 
les \coes des $f_i$ et des $g_j$. Alors~$\gB$ est une extension finie 
de 
$\gA$ (i.e., $\gB$ est un \Amo \tfz). Notre but est de montrer que
$\gA=\gB$.
On appelle  $\fa$ le conducteur de $\gA$ dans $\gB$. Notre but est 
maintenant de montrer 
$\fa=\gen{1}$, \cad que $\gA\sur{\fa}$ est trivial. \\
D'après le lemme \ref{flemIntSemin1} l'idéal $\fa$ est un idéal radical de 
$\gB$. Le lemme \ref{flemIntSemin2} s'applique avec $\gA\subseteq\gB$. 
On~a $\gA\sur{\fa}=\gC\subseteq \gB\sur{\fa}=\gC'$ réduits, 
et $f_ig_j=m_{i,j}$ au 
niveau $\gB\sur{\fa}$. 
Pour montrer que  $\gC$ est 
trivial, il suffit de montrer que  $\gC$ vérifie, avec la matrice 
$P$ mod $\fa$, les hypothèses du lemme \ref{flemThierry}.\\
Considérons donc $a\in\gA$ tel que  $\Im\,P$ 
soit libre sur $\gC[1/a][\uX]$.\\
Notons $\gC[1/a]=\gL\subseteq  \gC'[1/a]=\gL'$.\\
Si  $x$ est un objet défini sur $\gA$ notons $\ov{x}$ ce qu'il 
devient après le changement de base 
$\gA\to\gL'$.\\
Le module $\ov{M}$ est libre sur $\gL[\uX]$ et cela implique, par 
unicité (lemme \ref{flempropImProjLib}), vu que \hbox{$f_1(0)=g_1(0)=1$} et 
que $\gL$ est réduit,
que les $\ov{f_i}$ et $\ov{g_j}$ sont dans $\gL[\uX]$ 
(tenir compte du lemme~\ref{flemUnitRedX}).\\
Cela signifie qu'il existe $N\in \NN$  tel que les $a^Nf_i$ et 
$a^Ng_j$ sont à \coes dans $\gA$. D'après le lemme~\ref{flemIntSemin2}, ceci implique que $a\in\fa$, donc $a=0$ dans
$\gC$.
\end{proof}

En utilisant le lemme \ref{flemIntSemin1bis} à la place du lemme \ref{flemIntSemin1} on obtiendra le résultat suivant, plus précis
que le \thoz~\ref{fpropIntSemin}.

\begin{ftheorem} 
\label{fthTSCBis} 
Si $\gA$ est un anneau contenu dans
un anneau \zed réduit $\gB$ et $M$ un module
\pro de rang $1$ sur $\AuX$, il existe $c_{1},\ldots,c_{m}$ dans  $\gB$ tels que:
\begin{enumerate}
\item $c_{i}^2$ et $c_{i}^3$ sont dans $\gA[(c_{j})_{j<i}]$ pour $i=1,\ldots,m$,
\item $M$ est libre sur $\gA[(c_{j})_{j\leq m}][X]$.
\end{enumerate}
\end{ftheorem}

\setcounter{section}{5}
\setcounter{ftheorem}{0}
\subsection*{E. Anneaux à pgcd} 
\addcontentsline{toc}{subsection}{E. Anneaux à pgcd}

Nous terminons avec une démonstration détaillée du \tho \ref{fthZedLib},
pour la lectrice sceptique quant à la validité de la machinerie 
locale globale élémentaire page~\pageref{fmlge}.

\begin{fdefinition} 
\label{fdefqi} Un anneau $\gA$ est dit \emph{\qiz} lorsque tout \elt  
admet pour annulateur un (idéal principal engendré par un) \idmz.
Pour $a\in\gA$, on note alors $e_a$  l'\idm tel que $\Ann(a)=\gen{1-
e_a}$, de sorte que $a$  est régulier dans $\gA[1/e_a]$ et nul dans
 $\gA[1/(1-e_a)]$.
\end{fdefinition}

Un anneau intègre n'est autre qu'un anneau \qi connexe.

\begin{flemma} \label{flemQI}
On consid\`{e}re des \elts $x_1,\dots,x_n$ d'un anneau commutatif. 
Si l'on a
$\Ann(x_i) = \gen{r_i}$ o\`{u} $r_i$ est un idempotent pour $1 \leq i
\leq n$, soit $s_i$ tel que $s_i + r_i = 1$, et
posons $t_1=s_1$, $t_2=r_1s_2$, $t_3=r_1r_2s_3 ,\dots$,
$t_{n+1}=r_1r_2\cdots r_n$.
Alors $t_1,\dots,t_{n+1}$ est un \sfio
et l'\elt $x=x_1+t_2x_2+\cdots +t_nx_n$ v\'{e}rifie
\[\Ann(x_1,\dots,x_n) = \Ann(x) = \gen{t_{n+1}}.\]
\end{flemma}

\begin{fcorollary} 
\label{fcorlemQI2} 
Sur un anneau \qi $\gA$ soit $P$ une matrice carrée telle que 
$\Tr(P)$ est régulier. Alors il existe une matrice $J$ de même 
format telle que $J^2=J$ et $JPJ=JPJ^{-1}$ admet un \coe régulier en 
position $(1,1)$
\end{fcorollary}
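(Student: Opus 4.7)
Le plan est d'appliquer le lemme \ref{flemQI} aux \elts diagonaux $x_i = m_{i,i}$, puis de construire $J$ comme combinaison pondérée, par un \sfioz, de matrices de permutation échangeant le vecteur~$1$ de la base canonique avec l'un des autres.

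Puisque $\gA$ est \qiz, chaque $m_{i,i}$ a pour annulateur un idéal $\gen{r_i}$ avec $r_i$ \idmz. Le lemme \ref{flemQI} fournit alors un \sfio $(t_1,\dots,t_{n+1})$ et un \elt $x = x_1 + t_2 x_2 + \cdots + t_n x_n$ tel que $\Ann(x_1,\dots,x_n) = \Ann(x) = \gen{t_{n+1}}$. Le point clé est que $t_{n+1}$ annule chaque $x_i$, donc annule leur somme $\Tr(P)$~; comme $\Tr(P)$ est supposé régulier, il vient $t_{n+1}=0$, et $(t_1,\dots,t_n)$ est lui-même un \sfioz.

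Je poserais ensuite $J = t_1 \In + t_2 J_2 + \cdots + t_n J_n$, où $J_k$ désigne la matrice de permutation échangeant les vecteurs $1$ et $k$ de la base canonique (avec $J_1 = \In$). L'orthogonalité des $t_k$ et l'identité $J_k^2 = \In$ donnent $J^2 = \sum_k t_k J_k^2 = \sum_k t_k \In = \In$ (en particulier $J$ est sa propre inverse). De la même façon, dans $JPJ$ les termes croisés $t_i t_j J_i P J_j$ pour $i \neq j$ disparaissent, d'où $JPJ = \sum_k t_k J_k P J_k$~; comme le \coe en position $(1,1)$ de $J_k P J_k$ est précisément $m_{k,k} = x_k$, celui de $JPJ$ vaut $\sum_k t_k x_k = x$, qui est régulier par la conclusion du lemme~\ref{flemQI}.

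La seule subtilité est d'ordre conceptuel plutôt que technique~: il faut voir que, composante par composante dans la décomposition $\gA = t_1\gA \oplus \cdots \oplus t_n\gA$ (une fois $t_{n+1}=0$ établi), la matrice $J$ agit comme la véritable matrice de permutation $J_k$, ce qui ramène le calcul de $JPJ$ à une situation standard et fait apparaître le \coe diagonal voulu en position $(1,1)$. Le reste est une vérification directe gr\^ace à l'orthogonalité des~$t_k$.
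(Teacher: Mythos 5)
Your proof is correct and follows essentially the same route as the paper's: apply Lemma \ref{flemQI} to the diagonal entries, deduce $t_{n+1}=0$ from the regularity of $\Tr(P)$, and conjugate by $J=t_1\In+t_2J_2+\cdots+t_nJ_n$ so that the $(1,1)$ entry of $JPJ$ becomes the regular element $x$. Note that your computation correctly yields $J^2=\In$ (so $J=J^{-1}$), which is what the argument needs; the relation \guig $J^2=J$\guid{} in the statement is a misprint for $J^2=\In$, as the English version of the corollary confirms.
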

\begin{proof}
On applique le lemme précédent avec les \elts $x_i=m_{i,i}$ de la 
diagonale de la matrice. On~a $t_{n+1}=0$ car $t_{n+1}\Tr(P)=0$.
Donc $(t_1,\ldots ,t_n)$ est un \sfioz.
Notons~$J_k$ la matrice de permutation qui échange les vecteurs
numéros 1 et $k$ de la base canonique. On pose 
$J=t_1\In+t_2J_2+\cdots t_nJ_n$. On a $J^2=J$ et  le \coe en position 
$(1,1)$ dans~$JPJ$ est égal~à 
\[x=t_1x_1+t_2x_2+\cdots 
+t_nx_n=x_1+t_2x_2+\cdots +t_nx_n,\] 
donc il est régulier.
\end{proof}

Un anneau \zed réduit est \qiz. Inversement
si $\gA$ est \qiz, l'anneau total des fractions de $\gA$, que nous 
notons $\Frac(\gA),$ est un anneau \zed réduit: pour tout~$a$, l'\elt
$\wi{a}=(1-e_a)+a$ est régulier et $a/\wi{a}=a\bl$ est quasi inverse 
de $a$ dans $\Frac(\gA)$. En outre, pour tout $a\in\gA$, 
$\gA[1/a]$ est un anneau \qi et $\Frac(\gA[1/a])$ s'identifie
à $e_a\Frac(\gA)\simeq \Frac(\gA)[1/a]$. 

Enfin, si $\gA$ est \qiz,
il en va de même pour $\gA[X]$, l'annulateur d'un \pol $f$ étant
engendré par l'\idm produit des annulateurs des \coesz.

\smallskip Dans un anneau \qi si $a$ divise $b$ et $b$ divise $a$,
on a $e_a=e_b$ et $ua=b$  avec un \elt $u$ inversible.
Ceci permet de développer pour les anneaux \qis une théorie du 
pgcd tout à fait analogue à celle des anneaux intègres.

\begin{fdefinition} 
\label{fdefMongcd}  Un monoïde commutatif régulier est appelé 
un  \emph{monoïde à pgcd} lorsque deux \elts arbitraires 
admettent toujours un plus grand commun diviseur. 
Si $g$ est un pgcd pour $a$ et $b$ on écrit  $g=\pgcd(a,b)$
(en fait un pgcd n'est défini qu'à un inversible 
près).
\end{fdefinition}

\begin{flemma} 
\label{flemQigcd} Soit  $\gA$ un anneau \qiz. \Propeq 
\begin{enumerate}
\item Le monoïde des \elts réguliers est un monoïde à 
pgcd.
\item Pour chaque \idm $e$, les \elts réguliers de $\gA[1/e]$ 
forment un monoïde à pgcd.
\item Deux \elts arbitraires admettent toujours un plus grand commun 
diviseur.
\end{enumerate}
Dans ce cas on dit que $\gA$ est un anneau  \emph{\qi à pgcd}, et le pgcd de 
deux \elts $a$ et $b$, bien défini à un inversible près est 
noté
$\pgcd(a,b)$.
\end{flemma}
Par exemple, pour \emph{1.} implique \emph{2.}, on considère, pour $a\in e\gA$ 
avec $a$ régulier dans $\gA[1/e]$, l'\elt $\wi{a}=(1-e_a)+a$ 
régulier dans $\gA$. Si $g$ est le pgcd de  $\wi{a}$ et  $\wi{c}$ 
dans $\gA$, le même \elt $g$, vu dans $\gA[1/e]$, est le pgcd de $a$ 
et $c$.

Sur un anneau \qi à pgcd, soit un \pol $f(X)=\sum_{k=0}^nf_kX^k$, on 
note $\rG(f)$ le pgcd (défini à une unité près) des \coes de 
$f$. Si $\rG(f)=1$ on dira que $f$ est primitif{\footnote{~Ceci entre 
en conflit avec une autre tradition, qui dit que $f$ est primitif 
lorsque l'idéal des \coes de $f$ est égal à $\gen{1}$.}}.

\medskip 
Un anneau \qi à pgcd connexe est un anneau à pgcd usuel.

Il est clair qu'un groupe est un monoïde à pgcd
ce qui implique qu'un anneau \zed réduit est un anneau \qi à pgcd.

Il nous faut vérifier que les arguments dans la démonstration du lemme
\ref{flemPicGcd} s'appliquent aussi bien aux anneaux \qis à pgcd 
qu'aux anneaux à pgcd usuels. En particulier, si $\gA$ est un anneau 
\qi à pgcd, il en va de même
pour $\gA[X]$. Ainsi on obtiendra que pour tout anneau \zed réduit 
$\gA$, l'anneau $\AuX$ est un anneau \qi à pgcd et donc tout
\mrc 1 sur $\AuX$ est libre.

Regardons tout d'abord ce qui concerne le premier argument dans la 
démonstration:\\
\emph{Soit $P=(m_{i,j})$ une matrice \idme de rang 1. 
Puisque $\sum_i m_{i,i}=1$ on peut supposer que~$m_{1,1}$ est 
régulier.}\\
Il est clair que notre corolaire \ref{fcorlemQI2} fait l'affaire.

Pour le reste nous nous reportons à \gui{la bible} \cite{fMRR}, livre 
dans lequel les démonstrations sont en général réduites à 
leur forme algorithmique la plus simple.

\begin{flemma} 
\label{flemGCD1} \emph{(cf. Theorem 1.1 page 108 dans \cite{fMRR})}\\
Soient $a,b,c$ dans un anneau \qi à pgcd. Alors
\begin{enumerate}
\item $\pgcd(\pgcd(a,b),c)=\pgcd(a,\pgcd(b,c))$.
\item $c\cdot \pgcd(a,b)=\pgcd(ca,cb)$.
\item Si $x=\pgcd(a,b)$, alors $\pgcd(a,bc)=\pgcd(a,xc)$.
\item Si $a|bc$ et $\pgcd(a,b)=e_b$ alors $a|e_bc$. 
\end{enumerate}
\end{flemma}

Si l'un des trois \elts $a,b,c$ est nul, les affirmations sont évidentes. 
Dans le cas général, on considère un \sfio engendré par $e_a$, $e_b$ et $e_c$. 
Si~$r_i$ est un \elt de ce système, alors dans $\gA[1/r_i]$ chacun des 3 \elts $a,b,c$ est nul ou régulier. 
La démonstration donnée dans \cite{fMRR} pour les monoïdes commutatifs réguliers s'applique dans la composante où les trois \elts sont réguliers. 

\smallskip Une conséquence du point \emph{2} dans le lemme ci-dessus est
que pour un anneau \qi à pgcd, un \pol primitif est un \elt 
régulier de $\gA[X]$. 

\begin{flemma} 
\label{flemPrimFact} \emph{(en suivant le lemme 4.2 page 123 dans \cite{fMRR})}~\\
Soit $\gA$  un anneau \qi à pgcd et $\gK=\Frac(\gA)$. Si  
$f\in\gK[X]$ nous pouvons trouver un \pol primitif $g\in\gA[X]$ et
$c\in\gK$ tel que $f=cg$. Pour une autre décomposition $f=c'g'$ 
du même type, il existe $u\in\gA^\times$ tel que $c=uc'$.
\end{flemma}

%
\begin{proof}
Si $f=0$ on prend $g=1$ et $c=0$. 
Si $\rG(f)$ est régulier la démonstration dans \cite{fMRR} fonctionne,
en rempla\c{c}ant \gui{$a\neq 0$} par \gui{$a$ est régulier}.
Il suffit donc de casser l'anneau en deux morceaux au moyen
de l'\idm $e_{\rG(f)}$. 
\end{proof}
%

\begin{flemma} 
\label{flemGauss} \emph{(lemme de Gauss, lemme 4.3 page 123 dans 
\cite{fMRR})}\\
Soit $\gA$  un anneau \qi à pgcd et $f,g\in\gA[X]$ alors 
$\rG(f)\rG(g)=\rG(fg)$.
\end{flemma}

On considère le \sfio $(r_i)$ engendré par les $e_c$ pour tous les 
\coes $c$  de $f$ et $g$. Dans chacun des anneaux $\gA[1/r_i]$ les 
\pols $f$ et $g$ ont un degré bien 
déterminé{\footnote{~Précisément un \pol a un degré bien 
déterminé lorsqu'on connait un entier $q\geq 0$ tel que le 
\coe de degré $q$ est à la fois dominant et régulier, sans avoir 
à supposer que l'anneau est trivial ou non.}}. L'élégante démonstration 
par récurrence sur $n+m=\deg(f)+\deg(g)$ donnée dans \cite{fMRR} 
s'applique:

On raisonne par induction sur $m+n$.
Par distributivité (point \emph{2} du lemme \ref{flemGCD1}) et vu le lemme~\ref{flemPrimFact}, on se ramène au cas où $\rG(f)=\rG(g)=1$. On 
pose $c=\rG(fg)$ et $d=\pgcd(f_n,c)$. Alors $d$ divise
$(f-f_nX^n)\,g$. Si $f=f_nX^n$ le résultat est clair. 

Sinon, par \hdr
$d$ divise $\rG(f-f_nX^n)\,\rG(g)=\rG(f-f_nX^n)$, donc $d$ divise~$f$, 
et~$d=1$. Ainsi $\pgcd(f_n,c)=1$. De m\^{e}me  $\pgcd(g_m,c)=1$ et 
puisque~$c$ divise $f_ng_m$, $c=1$. 

\begin{fcorollary} 
\label{fcorlemGauss} \emph{(corolaire 4.4 page 123 dans \cite{fMRR})}\\
Soit $\gA$  un anneau \qi à pgcd,  $f,g\in\gA[X]$ et 
$\gK=\Frac(\gA)$. Alors $f$ divise $g$ dans $\gA[X]$ \ssi
 $f$ divise $g$ dans $\gK[X]$ et $\rG(f)$ divise $\rG(g)$.
\end{fcorollary}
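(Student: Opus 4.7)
Mon plan pour ce corolaire est le suivant. Le sens direct sera une application immédiate du lemme de Gauss \ref{flemGauss}: si $g=fh$ dans $\gA[X]$, alors $\rG(g)=\rG(f)\rG(h)$ à une unité près, ce qui donne $\rG(f)\mid\rG(g)$; l'implication \gui{$f\mid g$ dans $\gK[X]$} étant triviale.

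Pour la réciproque, je prévois de suivre le schéma désormais habituel dans cette section, consistant à se ramener au cas intègre via une décomposition par une \sfio convenable. Plus précisément, je considérerai le \sfio $(r_i)$ engendré par les \idms $e_c$ pour tous les \coes $c$ de $f$ et $g$, augmenté de $e_{\rG(f)}$. Sur chaque composante $\gA[1/r_i]$, tous ces \elts deviennent nuls ou réguliers, et les \pols $f,g$ y ont un degré bien déterminé; je traiterai ensuite chaque composante séparément.

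Sur la composante où $\rG(f)=0$, tous les \coes de $f$ y sont nuls, donc $f=0$, et l'hypothèse que $f$ divise $g$ dans $\gK[X]$ force $g=0$: le résultat est alors trivial. Sur la composante où $\rG(f)$ est régulier, je reprendrai la preuve classique de \cite{fMRR}: partant de $g=fh$ dans $\gK[X]$, le lemme \ref{flemPrimFact} permet d'écrire $h=\alpha h^*$ avec $h^*\in\gA[X]$ primitif et $\alpha=a/b\in\gK$, $b$ régulier; l'\egt $bg=afh^*$ jointe au lemme de Gauss \ref{flemGauss} donne $b\,\rG(g)$ associé à $a\,\rG(f)$. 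La régularité de $\rG(f)$ combinée avec l'hypothèse $\rG(f)\mid\rG(g)$ permet alors de simplifier et de conclure que $b$ divise $a$ dans $\gA[1/r_i]$, donc $\alpha$ y appartient et $h\in\gA[1/r_i][X]$.

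Le principal obstacle me semble être le choix correct de la \sfio initiale, qui doit capturer exactement les obstructions à la preuve intègre (notamment le fait que $\rG(f)$ puisse ne pas être régulier). Une fois ce cadre posé, le recollement entre les composantes via l'orthogonalité des \idms est automatique, et chaque étape se réduit à une manipulation du type lemme de Gauss dans un anneau \qi à pgcd où les \elts critiques sont tous réguliers.
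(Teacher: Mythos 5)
Votre démonstration est correcte et suit exactement la voie indiquée par l'article, qui se contente de renvoyer à la preuve du corolaire 4.4 de \cite{fMRR} en signalant que la disjonction \gui{$x=0$ ou $x$ régulier} se réalise en cassant l'anneau au moyen des \idms $e_x$. Vous ne faites qu'expliciter ce décryptage (sens direct par le lemme de Gauss~\ref{flemGauss}, réciproque par décomposition selon un \sfio convenable puis argument de \cite{fMRR} sur la composante où $\rG(f)$ est régulier, la composante $\rG(f)=0$ étant triviale), ce qui est précisément l'intention des auteurs.
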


\begin{ftheorem} 
\label{fthGauss} \emph{(théorème 4.6 page 124 dans \cite{fMRR})}\\
Si $\gA$  est un anneau \qi à pgcd, il en va de même pour  
$\gA[X]$.
\end{ftheorem}

Les démonstrations dans \cite{fMRR} s'appliquent.

\medskip En fait, tout ceci est parfaitement automatique.
Les démonstrations dans \cite{fMRR}, qui sont aussi des algorithmes, sont 
basées sur la disjonction \gui{$x=0$ 
ou $x$ régulier} valable pour tout $x$ dans un anneau intègre. 
Quand on passe aux
anneaux \qisz, il suffit de réaliser la disjonction en cassant
l'anneau en 2 morceaux, au moyen de l'\idm $e_x$, chaque fois 
que la démonstration (i.e., l'algorithme) trouve un $x$  qu'il faut traiter.

\newpage

\rdb

\markboth{References}{Références}



\endgroup
\end{document}